\def\input@path{{./}}
\definecolor{mygreen}{rgb}{0.0, 0.6, 0.0}
\let\Oldsection\section
\renewcommand{\section}{\FloatBarrier\Oldsection}
\let\Oldsubsection\subsection
\renewcommand{\subsection}{\FloatBarrier\Oldsubsection}
\let\Oldsubsubsection\subsubsection
\renewcommand{\subsubsection}{\FloatBarrier\Oldsubsubsection}
\def\input@path{{./}}
\newcommand{\newreptheorem}[2]{
    \newtheorem*{rep@#1}{\rep@title}
    \newenvironment{rep#1}[1]{
        \def\rep@title{#2 \ref*{##1}}
        \begin{rep@#1}
    }{
        \end{rep@#1}
    }
}
\newenvironment{delayedproof}[1]
 {\begin{proof}[\raisedtarget{#1}\textbf{Proof of~\Cref{#1}}]}
 {\end{proof}}
\newcommand{\raisedtarget}[1]{%
  \raisebox{\fontcharht\font`P}[0pt][0pt]{\hypertarget{#1}{}}%
}
\newcommand{\proofref}[1]{\hyperlink{#1}{Proof of~\Cref{#1}}}
\DeclareMathOperator{\im}{im}	% im
\DeclareMathOperator{\M}{\mathtt{M}}
\DeclareMathOperator{\W}{\mathtt{W}}
\DeclareMathOperator{\g}{\mathtt{g}}
\DeclareMathOperator{\f}{\mathtt{f}}
\newcommand\Span{\mathrm{Span}\,}
\newcommand{\dr}{\partial}
\newcommand{\inc}[2]{\mathtt{i}_{#1}}
\newcommand{\proj}[2]{\mathtt{j}_{#2}}
\newcommand{\cplx}{{(K,\partial)} }
\newcommand{\cplxp}{{(K',\partial')} }
\newcommand{\cplxpdr}{{(K',\partial)} }
\newcommand{\cplxarrow}[1]{\overset{\partial_{#1}}{\longrightarrow}}
\newcommand{\ols}[1]{\mskip.5\thinmuskip\overline{\mskip-.5\thinmuskip {#1} \mskip-.5\thinmuskip}\mskip.5\thinmuskip} % overline short
\newcommand{\explicit}{explicit}
\newcommand{\Explicit}{Explicit}
\newcommand{\HC}{HDVF}
\newcommand{\HCs}{HDVFs}
\newcommand{\HomC}{Homological Discrete Vector Field}
\newcommand{\Homcs}{Homological discrete vector fields}
\newcommand{\homc}{homological discrete vector field}
\newcommand{\mun}{^{-1}}
\newcommand{\bs}{\backslash}
\newcommand{\tx}[1]{\text{#1}}
\newcommand{\R}{\mathbb{R}}
\newcommand{\Z}{\mathbb{Z}}
\newcommand\id{id}
\newcommand{\equi}{\Longleftrightarrow}
\NewDocumentCommand\V{mggg}{%
	\ensuremath{\begin{pmatrix}#1\IfNoValueTF{#2}{}{\\#2\IfNoValueTF{#3}{}{\\#3\IfNoValueTF{#4}{}{\\#4}}}\end{pmatrix} }
}
\newcommand{\red}[1]{{\color{red}#1}}
\newcommand{\blue}[1]{{\color{blue}#1}}
\newcommand{\green}[1]{{\color{mygreen}#1}}
\newcommand{\primary}[1]{{\color{mygreen}\mathsf{#1}}}
\newcommand{\secondary}[1]{{\color{red}\mathsf{#1}}}
\newcommand{\critical}[1]{{\color{blue}\mathsf{#1}}}
\definecolor{DGreen}{RGB}{0, 105, 50}
\newcommand{\add}[1]{{#1}}%\color{DGreen}
\newcommand{\cheat}[1]{\vspace{-#1\textheight}} % \newcommand{\cheat}[1]{}
\NewDocumentCommand\twocolreport{mmgg}{%
	\begin{minipage}{\textwidth}
		\begin{minipage}[!t]{\IfNoValueTF{#3}{0.5}{#3}\textwidth}
			#1
		\end{minipage}
		\hfill
		\begin{minipage}[!t]{\IfNoValueTF{#4}{0.5}{#4}\textwidth}
			#2
		\end{minipage}%
	\end{minipage}
}
\begin{document}

%Characterization of Combinatorially Computable (Co)Homology Bases
%Characterizing the (co)homology bases that we all compute

\title{Characterization of the computed homology and cohomology bases}

\author{{Yann-Situ Gazull}\inst{1} %\orcidID{009-003-8539-9275}
\and {Aldo Gonzalez-Lorenzo}\inst{1} %\orcidID{00-003-3226-7650} 
\and {Alexandra Bac}\inst{1} %\orcidID{009-005-3163-6882}
}
\institute{{Aix Marseille Univ, CNRS, LIS}, {{Marseille}, {France}}\\
\email{yann-situ.gazull@univ-amu.fr} \quad
\email{aldo.gonzalez-lorenzo@univ-amu.fr} \quad
\email{alexandra.bac@univ-amu.fr}
}

% %\titlerunning{Abbreviated paper title}
% % If the paper title is too long for the running head, you can set
% % an abbreviated paper title here
% %
% \author{First Author\inst{1}\orcidID{00-1111-2222-3333} \and
% Second Author\inst{2,3}\orcidID{1111-2222-3333-4444} \and
% Third Author\inst{3}\orcidID{2222--3333-4444-5555}}
% %
% \authorrunning{F. Author et al.}
% % First names are abbreviated in the running head.
% % If there are more than two authors, 'et al.' is used.
% %
% \institute{Princeton University, Princeton NJ 08544, USA \and
% Springer Heidelberg, Tiergartenstr. 17, 69121 Heidelberg, Germany
% \email{lncs@springer.com}\\
% \url{http://www.springer.com/gp/computer-science/lncs} \and
% ABC Institute, Rupert-Karls-University Heidelberg, Heidelberg, Germany\\
% \email{\{abc,lncs\}@uni-heidelberg.de}}
% %https://overleaf.g-mod.lis-lab.fr/project/652e973eeb03c9008d8e7602

% \input{reviews}

\maketitle              % typeset the header of the contribution

\begin{abstract}
  Computing homology and cohomology is at the heart of many recent works and a key issue for topological data analysis.
Among homological objects, homology generators are useful to locate or understand holes (especially for geometric objects).
\add{The present paper provides a characterization of the class of homology bases that are computed by standard algorithmic methods.}
% The present paper proves that standard algorithmic approaches compute \add{a specific class of} homology bases and gives a characterization of them. %(calling them \textit{explicit bases}).
The proof of this characterization relies on the Homological Discrete Vector Field, a combinatorial structure for computing homology, which encompasses several standard methods (persistent homology, tri-partitions, Smith Normal Form, discrete Morse theory).
These results refine the combinatorial homology theory and provide novel ideas to gain more control over the computation of homology generators.
\end{abstract}

\keywords{Computational Homology \and Combinatorial Topology \and \HomC\ \and Persistent Homology}
%mandatory; please add comma-separated list of keywords

%\tableofcontents%to remove for submission

\section{Introduction}
% \commentAlex{
After having been a branch of mathematics from early 20th century, homology and cohomology have become a key issue in computer science since the 80's to study holes of data independently of geometry.
Later in the 90's, persistent homology even became a central tool for topological data analysis.
Indeed, it formalizes ``hole changes'' along a time-varying sequence of objects and computes a notion of ``hole importance'' or ``duration'', stable against noise.

(Co)homology computation can be carried out in several ways.
Algebraic computation, as a pseudo-diagonalization (Smith Normal Form) is widely used, for instance, for persistent homology.
On the other hand, effective homology provides a ``categorical'' approach (see~\cite{Sergeraert1986}), while Discrete Morse Theory proposes a combinatorial approach (see~\cite{forman-morse_theory_cell_complex}).
In 2017, Aldo-Gonzalez Lorenzo et al.~\cite{aldo-cycles_discrete_morse_theory} introduced Homological Discrete Vector Field (which we refer to as \emph{\HC} in the present paper), which %unifies state-of-art approaches and 
provides a framework \add{in between combinatorial approaches (such as discrete Morse theory) and algebraic approaches (such as Smith normal form), encompassing these state-of-art approaches but distinct from them}. \\ %In this setting, homology computations can be formalized as HDVF completions. \\
It has long been known that not all homology basis can be computed by standard algorithmic approaches (Smith Normal Form, persistent homology, tri-partitions, Discrete Morse Theory).
For instance, 1-dimensional computed generators are elementary cycles.
In the present paper, we introduce a precise characterization of homology bases computed by HDVFs, which we call \textit{explicit bases}.
We then show (or recall) the equivalence between specific HDVFs and other algorithmic methods and thus deduce that all of them actually compute such explicit bases.
%}

The paper is organised as follows:~\Cref{sect:preliminaries} introduces homology and \HCs.
\add{\Cref{sect:generators} contains our main contribution;} we define and discuss the notion of explicit homology bases, and prove that being \explicit\ is equivalent to being induced by a perfect \HC.
In~\Cref{sect:charac-consequences}, we explain how this characterization applies to homology bases computed by other standard methods.
In particular, we prove that tri-partitions (defined in~\cite{edelsbrunner-tri_partition}) can be seen as layers of perfect \HCs\ (and vice versa) and show that a homology basis is \explicit\ if and only if it is induced by a tri-partition.
We also explain how this characterization applies to persistent homology bases.

\section{Preliminaries}\label{sect:preliminaries}

\subsection{Homology, complexes and notations}
The present work considers homology over a field $\mathcal{F}$.
In the rest of the paper, we consider that $\mathcal F = \Z/2\Z$ but results hold for any field.
In this section, we quickly introduce the main concepts of homology theory.
%We also define the notations and conventions used in this paper.
For more information on homology theory, see~\cite{hatcher-algebraic_topology}.

%\REVIEW{Cellular homology is defined imprecisely in the preliminaries}
%\commentAlex{Je suis embêtée car la définition de l'opérateur de bord sur les CW-complexes ne me paraît pas évidente ... comme les recollements sont des applications de la sphère unité dans $ X^{n-1}$...} 
A cellular complex $\mathscr C$ is a topological structure that consists of cells of different dimensions ($\mathscr C_q$ the set of cells of dimension $q$) attached together by ``gluing'' maps (for a more precise definition, see~\cite{hatcher-algebraic_topology}).
Depending on the type of cells, cellular complexes are categorized into a variety of combinatorial structures: simplicial complexes and delta-complexes (triangles, tetrahedra, etc.), cubical complexes (squares, cubes, etc.), or general cellular complexes.
In this article, we mainly use cubical complexes, but the results apply to any cellular complex.

The definition of homology for these structures is based on the algebraic version of the concept of boundary.
The boundary morphism $\dr_q$ must associate to a $q$-cell a linear combination of $(q-1)$-cells corresponding to its geometric faces, while respecting the constraint that $\dr_{q} \circ \dr_{q+1} = 0$.
When such a definition is possible, we obtain a chain complex, an abstract algebraic structure on which we can compute homology:
\begin{definition}[chain complex~\cite{hatcher-algebraic_topology}]\label{def:chain-complex}
    A \emph{chain complex} $(K,\dr)$ is a sequence of $\mathcal{F}$-vector fields $K_q$ with linear maps $\dr_q: K_q \to K_{q-1}$ satisfying $\dr_{q} \circ \dr_{q+1} = 0$:
    $$
	K_n \cplxarrow{n} \dots \cplxarrow{q+1} K_q \cplxarrow{q} K_{q-1} \cplxarrow{q-1} \dots \cplxarrow{1} K_{0} \cplxarrow{0} 0
	$$
    $K_q$ is the space of \emph{$q$-chains} and $\dr_q$ is the \emph{$q$-boundary map}.
    When $(K,\dr)$ is derived from a cellular complex $\mathscr C$, the $q$-chains are defined as the linear combinations of $q$-cells: $K_q = \Span\mathscr C_q$.
    The boundary map $\dr$ is defined as the algebraization of the geometric boundary (see~\cite{hatcher-algebraic_topology}).
\end{definition}
The notion of \HC\ is deeply related to the relation between cells and chains.
That is why this paper only considers chain complexes that are derived from cellular complexes, which we simply refer to as \textit{complexes}.

Given a complex $(K,\dr)$, we denote $K := \bigoplus_{q=0}^n K_q$ the whole chain space and $\dr := \bigoplus_{q=0}^n \dr_q$ the boundary operator.
We also denote $\ols{K_q} := \mathscr{C}_q$ the set of $q$-cells and $\ols{K} := \bigcup_{q=0}^n \ols{K_q}$ the set of every cell in the associated cellular complex.
We respectively call \emph{$q$-cycles} and \emph{$q$-boundaries} the chains in $\ker\dr_q$ and in $\im\dr_{q+1}$.

%\commentAldo{Is it a good idea to use $K$ for the chain complex instead of the simplicial complex that generates it?}

%\commentAlex{\textbf{ET} : je serais pour avoir un nom spécifique pour les complexes (CW, simpliciaux) pour ne pas les confondre avec les complexes de chaîne. Je vous propose $K$ pour les complexes de chaînes puisque c'est ta notation, et $\mathscr{C}$ pour les complexes géométriques. Du coup $K_q := \Span{\mathscr C_q}$ me paraît plus clair ... ?}

%\commentYS{Je préfère en général le $K$ plutôt que le $\mathcal C$ car $\mathcal C$ ressemble trop à $C$ et ça m'avait déjà confus précédemment avec les HCs. Une alternative plus claire est $\mathscr C$. $\ols{K}$ est pas super comme notation non plus... mais je l'ai utilisé un peu partout}

% Complexe cellulaire ... phrase 1 OK
% Selon le type de cellules, les complexes cellaire se déclinent en une variété de structures combinatoires: complexes simpliciaux et delta-complexes (triangles, tétra...), complexes cubiques (carrés, cubes...) ou complexes cellulaires généraux. 
%Dans cet article nous travaillons principalement avec des complexes cubiques mais les résultats s'appliquent à tout complexe cellulaire.
% La définition d'une homologie pour ces structures repose sur l'algébrisation de la notion de bord. Le morphisme de bord doit associer à une q-cellule une combinaison linéaire de q-1 cellules correspondant à ses faces géométriques tout en respectant la contrainte que d^2 = 0.
% Quand une telle définition est possible, on obtient un complexe de chaîne: abstract algebraic ...

\begin{definition}[homology group~\cite{hatcher-algebraic_topology}]\label{def:homology-group}
    Given a complex $(K,\dr)$, its \emph{$q$-homology group} is defined as the quotient space of $q$-cycles over $q$-boundaries:
    $$H_q(K) = {\ker\dr_q}/{\im\dr_{q+1}}$$
    As $\mathcal{F}$ is a field, there exists an integer $\beta_q$ (called the $q$-th Betti number) such that $H_q(K)$ is isomorphic to $\mathcal{F}^{\beta_q}$.
    Intuitively, $\beta_q$ counts the number of holes of dimension $q$ in the associated cellular complex.
\end{definition}
We denote $[\cdot]^K_q : \ker\dr \to H_q(K)$ the map that associates a $q$-cycle to its corresponding $q$-homology class: $[x]^K_q = [y]^K_q \equi x-y \in \im\dr_{q+1}$.

\begin{definition}[homology bases and generators]\label{def:generators-homology-bases}
    A family $(\g_i)_{i\leq\beta}$ of $q$-cycles is a \emph{$q$-homology basis} of $\cplx$ if and only if $\left([\g_i]^K_q\right)_{i\leq\beta}$ is a basis of $H_q(K)$.
    The cycles $\g_i$ are called \emph{$q$-homology generators}.
\end{definition}

\paragraph{Notations and conventions:} %We give here some notations and conventions that will be used in the present paper.
given two complexes $\cplx$ and $\cplxp$, we will write $\cplx \subseteq \cplxp$ if $\cplx$ is a sub-complex of $\cplxp$ i.e. if $K\subseteq K'$ and $\dr'$ agrees with $\dr$ on $K$. % i.e $\dr = \proj{}{K}\dr'\inc{K}{}$; where $\inc{K}{B}$ is the inclusion map from $K$ and $\proj{B}{K}$ is the projection map to $K$.
For simplicity, we will often identify $\dr$ and $\dr'$. % and write $\cplx \subseteq \cplxpdr$.
If $A$ is a set of cells, we will write $\inc{A}{}$ the inclusion map from \add{the subspace $\Span A$ into the full chain space $K$, $\proj{}{A}$ the corresponding projection map from $K$ to $\Span A$} and $\id_{A}$ the identity map on $\Span A$.
Given two sets of cells $A$ and $B$, we also denote $\dr_{AB} := \proj{}{ B}\circ \dr \circ \inc{ A}{}$ the boundary induced by $\Span A$ restricted to $\Span B$.

\subsection{Reductions and \HCs}
Effective homology theory was introduced in the 90's in~\cite{Sergeraert1986}.
It defines the notion of \textit{reduction} between two chain complexes (usually from a large to a small complex), which ensures that their homology groups are isomorphic.
%\REVIEW{There is a definition of "reduction" from effective homology (see Definition 4). But the idea or the motivation behind this notion is not explained.}
Reductions play an important role in understanding the homology of complexes.
%They are related to \HC\ because every \HC\ induces a reduction.
\begin{definition}[reduction~\cite{Sergeraert1986}]\label{def:reduction}
    A \emph{reduction} from a complex $(K,\dr)$ to a complex $(K',d)$ is a triplet of maps $(f,g,h)$ with $f_q:K_q\to K'_q$,\quad $g_q:K'_q\to K_q$ and $h_q : K_q\to K_{q+1}$ satisfying the following properties:
    $$
        \mathbf{1.}~f \dr = d f \quad
        \mathbf{2.}~\dr g = g d \quad
        \mathbf{3.}~fh, hg, hh = 0 \quad
        \mathbf{4.}~f g = \id_{K'} \quad
        \mathbf{5.}~g f = id_K - \dr h - h \dr
    $$
    \centerline{% https://tikzcd.yichuanshen.de/#N4Igdg9gJgpgziAXAbVABwnAlgFyxMJZARgBoAGAXVJADcBDAGwFcYkQBpAfWAEcBfEP1LpMufIRQAmCtTpNW7bnwC0xQcNHY8BIuVk0GLNohAAdM1Ag4EmkBm0SiAZgPzj7C1ZtCR9sTqSJKRSckaKphwA5DwCvlriutIhYQomnDGq6vH+jknI+qGGaZ6W1rZ+DolBrkXuEeZlPvxyMFAA5vBEoABmAE4QALZIMiA4EEj69ekWaPR9eEyxANTZNHAAFlg9OEhqdv1DkzTjSGTTpXMLWEt8gutbO3vqfofDiOeniK4XprPzi0YsX2IAe212iH2rwG7ymXwALGCnpDzuF0j0uLxQSBGPQAEYwRgABQCTlMjBgTwOMLOJwmiAArEiIWpih5TBishpoUdEIixvSpptwc82Q12picm8kEyBbSQMLkazfiAJVypTTEAA2OlIflo9hQFbZam8-lfWUG0xGrGm96yr4AdjF6SN6rtSB+X3OipZqJKpg2wKk3N6ms+guZopVQfdPNhusQo190atICDtso-CAA
\begin{tikzcd}
\dots \arrow[rr, "\partial_{q+1}", shift left] && K_{q} \arrow[rr, "\partial_{q}", shift left] \arrow[dd, "f_q", shift left] \arrow[ll, "h_q", shift left] && K_{q-1} \arrow[rr, "\partial_{q-1}", shift left] \arrow[dd, "f_{q-1}", shift left] \arrow[ll, "h_{q-1}", shift left] && \dots \arrow[ll, "h_{q-2}", shift left] \\
                                              &                                                                                                        &                                                                                                                    &                                        \\
\dots \arrow[rr, "d_{q+1}"]                    && K'_{q} \arrow[uu, "g_q", shift left] \arrow[rr, "d_q"]                                                  && K'_{q-1} \arrow[uu, "g_{q-1}", shift left] \arrow[rr, "d_{q-1}"]                                                    && \dots
\end{tikzcd}
} \\    
    Then we have $\forall q,\ H_q(K) \simeq H_q(K')$.
\end{definition}
%\commentAlex{J'ai dégagé la proposition 1 et intégré à la fin de la définition ...}

%\begin{proposition}[reduction preserves homology~\cite{Sergeraert1986}]\label{prop_reduction-homology}
%    If there exists a reduction $(f,g,h)$ from $(K,\dr)$ to $(K',d)$, then we have $\forall q,\ H_q(K) = H_q(K')$.
%\end{proposition}

\begin{definition}[perfect reduction~\cite{Sergeraert1986}]\label{def:perfect-reduction}
    A reduction from $(K,\dr)$ to $(K',d)$ is perfect iff $d=0$.
    In this case, the $q$-homology group of both complexes is isomorphic to $K'_q$.
    In addition, $(g_q(\gamma))_{\gamma \in \ols{K'_q}}$ is a $q$-homology basis.
\end{definition}

\Homcs\ emerged as a generalization of Discrete Morse Theory~\cite{forman-morse_theory_cell_complex}.
Discrete Morse theory uses acyclic graphs (called \textit{discrete gradient vector fields}) defined on the cells of the complex and provides bounds on Betti numbers. However, it was shown that it fails to compute homology, for instance, for well-known contractible but not collapsible complexes such as the Bing's house. %this approach cannot compute the homology of any cellular complex and provided counterexamples such as the Bing's house.
\HCs\ were first developed to bypass this limitation.

\begin{definition}[\homc~\cite{aldo-phd_thesis} (\HC)]\label{def:hdvf}\label{prop_hvdf-reduction}
  Given a complex $(K,\dr)$, a \emph{\homc} is a triplet $X=(P,S,C)$ with $P$, $S$, and $C$ forming a partition of $\ols K$ such that $\dr_{SP}$ is invertible.
  The cells of $P$, $S$ and $C$ are respectively called \emph{primary}, \emph{secondary} and \emph{critical} cells.\\
  \add{It is proved in~\cite{aldo-phd_thesis} that the invertibility of $\dr_{SP}$ implies that the following $(f,g,h)$ is a well defined reduction from $(K,\dr)$ to $(\Span C,d)$}:
    % \cheat{0.01}
    \begin{align*}
      g :=
      \begin{blockarray}{cc}
      C \\
      \begin{block}{(c)c}
      0 & ~P \\
      G & ~S \\
      id & ~C \\
      \end{block}
      \end{blockarray}
      &&f :=
      \begin{blockarray}{cccc}
      P & S & C \\
      \begin{block}{(ccc)c}
      F & 0 & id & ~C \\
      \end{block}
      \end{blockarray}
      &&h :=
      \begin{blockarray}{cccc}
      P & S & C \\
      \begin{block}{(ccc)c}
      0 & 0 & 0 & ~P \\
      H & 0 & 0 & ~S \\
      0 & 0 & 0 & ~C \\
      \end{block}
      \end{blockarray} %\\
      &&d :=
      \begin{blockarray}{cc}
      C \\
      \begin{block}{(c)c}
      D & ~C \\
      \end{block}
      \end{blockarray}
    \end{align*}
    % \cheat{0.02}
    
    with\quad $H=(\dr_{SP})\mun$,\quad $F = -\dr_{SC} \cdot H$,\quad $G = -H \cdot \dr_{CP}$\ \ and\ \ $D = \dr_{CC} - \dr_{SC} \cdot H \cdot \dr_{CP}$.
    \add{Given an \HC\ this reduction is called the \textit{associated reduction}, an example is detailed in the appendix (see~\Cref{example:reduction}).}
  
    In figures, primary, secondary and critical cells are respectively colourized in {\color{mygreen}green}, {\color{red}red} and {\color{blue}blue} (see~\Cref{fig:partitions,fig:hc-examples} for some examples).
\end{definition}

\add{Let us point out that \HCs\ are singular objects, in between algebra and combinatorics, %without being reducible to either of them,
interleaving the notions of cells and chains.
Indeed, \HCs\ are not mere reductions as they rely on a cell partition and not on a space decomposition; but they are not purely combinatorial objects either, as \HC\ validity condition is algebraic (matrix invertibility).} % 

\begin{definition}[perfect \HC~\cite{aldo-phd_thesis}]\label{def:perfect-hc}
    A \HC\ is perfect if its induced reduction is perfect, i.e. if the reduced boundary $d = 0$.
    In this case, $\Span C_q$ is isomorphic to $H_q(K)$ and $g$ maps every critical cell to its associated homology generator: $(g_q(\gamma))_{\gamma\in C_q}$ is the $q$-homology basis associated to $(P,S,C)$.
    Similarly, $f^\ast$ \add{(the transpose of $f$)} maps every critical cell to its associated cohomology generator: $(f_q^\ast(\gamma))_{\gamma\in C_q}$ is the $q$-cohomology basis associated to $(P,S,C)$. \add{See~\Cref{sect:explicit-cohomology-bases} for more details on duality and cohomology}, and see~\Cref{fig:hc-basis-examples} for an illustration.
\end{definition}

\begin{figure}
    \cheat{0.02}
    \centering
    \includegraphics[width = 0.24\textwidth]{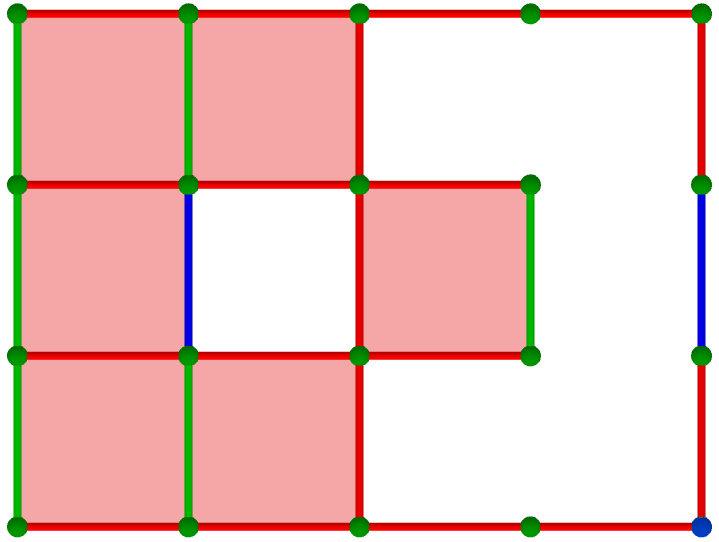} \hfill
    \includegraphics[width = 0.25\textwidth]{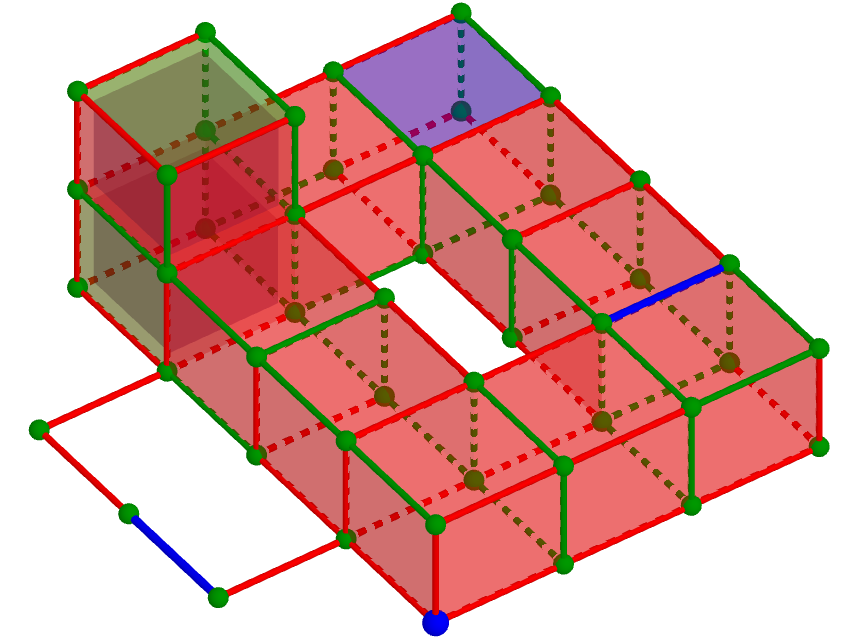}\hfill
    \includegraphics[width = 0.18\textwidth]{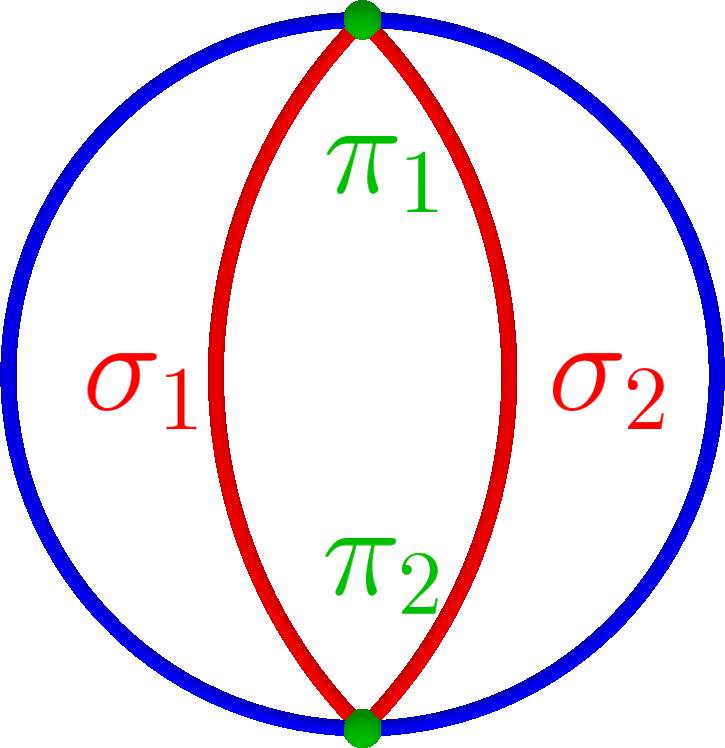}
    \caption{Examples of three complexes with partitions of the cells between primary (in {\color{mygreen}green}), secondary (in {\color{red}red}) and critical (in {\color{blue}blue}) cells.
    The first two partitions are \HCs\ whereas the last one is not a \HC: $\dr_{SP}$, the boundary of the secondary cells restricted to the primary cells, is not invertible because $\dr_{SP} (\sigma_1 + \sigma_2) = 0$.}
    \label{fig:partitions}
\end{figure}

\begin{figure}[!htb]
    \cheat{0.02}
    \centering
    \includegraphics[width = 0.27\textwidth]{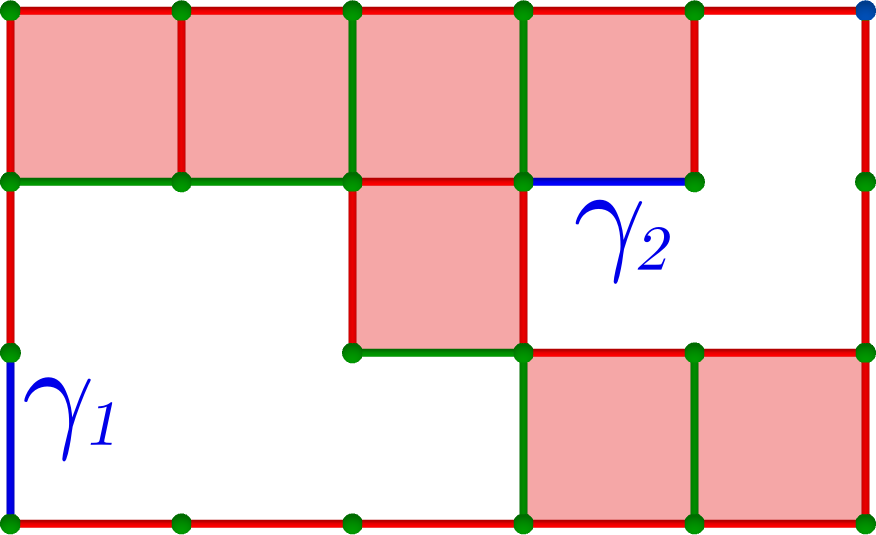}\hfill
    \includegraphics[width = 0.27\textwidth]{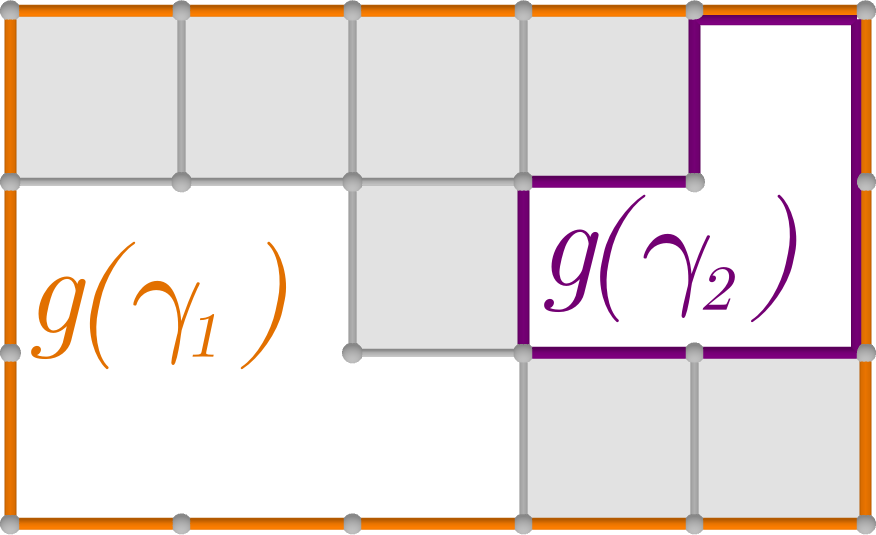}\hfill
    \includegraphics[width = 0.27\textwidth]{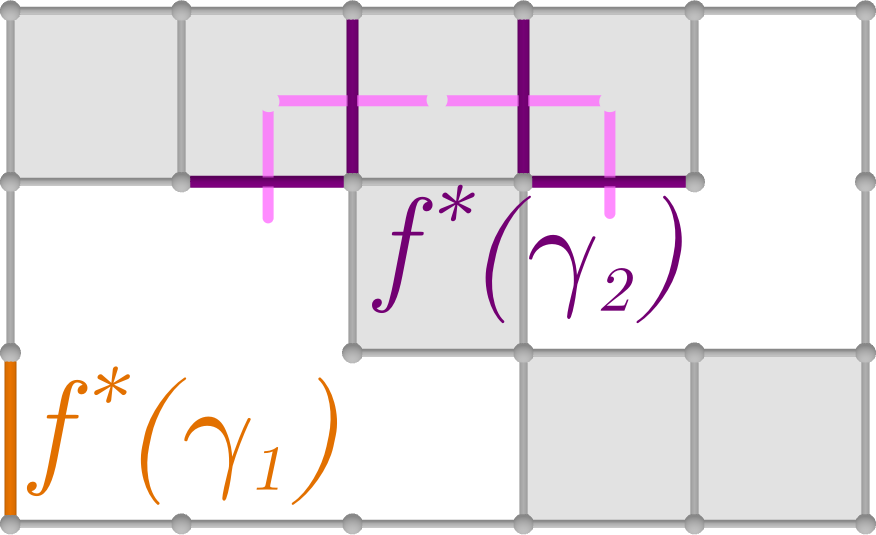}
    \caption{A perfect \HC\ $(P,S,C)$ with $C_1=\{\gamma_1, \gamma_2\}$ (left), its associated 1-homology basis $(g(\gamma_1), g(\gamma_2))$ (middle) and 1-cohomology basis $(f^\ast(\gamma_1),f^\ast(\gamma_2))$ (right).}
    \label{fig:hc-basis-examples}
\end{figure}
% \cheat{0.01}
%\commentAlex{Représenter dès ici la cohomologie par les cycles duaux => permet de tout mettre sur le même dessin à la figure 3}

Inspired by~\cite{edelsbrunner-tri_partition}, we define two maps which we call the \textit{canonical cycle} and \textit{co-cycle} maps.
Given a perfect \HC\ $(P,S,C)$ with reduction $(f,g,h)$, they are respectively defined as follows:
\begin{center}
    $ z_q = \id - h_{q-1}\dr_{q} \qquad z^q = \id - h_{q}^\ast\dr_{q+1}^\ast $
\end{center}
%The notation is the same as in~\cite{edelsbrunner-tri_partition}.
In particular, if $\gamma\in C_q$, the maps $z_q$ and $z^q$ coincide with the homology and cohomology generators: $ z_q(\gamma) = g_q(\gamma)$ and $z^q(\gamma) = f_q^\ast(\gamma)$.
This is a consequence of~\Cref{def:reduction}(5.) combined with the action of $f$, $g$ and $h$ on critical cells (see~\Cref{prop_hvdf-reduction}).
% Indeed, $\gamma$ is critical so $f(\gamma) = \gamma$ and $h(\gamma)=0$, so~\Cref{def:reduction}(5.) implies that $z_q(\gamma) = gf(\gamma) + \dr h(\gamma) = g(\gamma)$ (the proof is similar for $z^q(\gamma)$).
\add{Now let us state our first contribution; the following lemma will play a central role in what follows:}
\begin{lemma}\label{lem_canonical-cycle}
In a perfect \HC\ $(P,S,C)$:
\begin{center}
    $\forall x\in K_q \qquad (x+\Span S)\cap\ker\dr = \{z_q(x)\} \qquad (x+\Span P)\cap\ker\dr^\ast = \{z^q(x)\}$
\end{center}
  In particular, if $c\in\Span C_q$ we have $(c+\Span S)\cap\ker\dr = \{g_q(c)\}$.\\% and $(c+\Span P)\cap\ker\dr^\ast = \{f^\ast_q(c)\}$.
  In addition, we get $z_q \circ \inc{S_q}{} = 0$ and $z^q \circ \inc{P_q}{} = 0$.
\end{lemma}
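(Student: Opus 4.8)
The plan is to distill the proof to a single algebraic fact about the homotopy $h$ of the associated reduction: from its block form one reads off $\im h \subseteq \Span S$, that $h$ vanishes on $\Span S \oplus \Span C$, and — crucially — that $h\dr$ restricts to the identity on $\Span S$. The last point is the computation $h_{q-1}\dr_q s = H\,(\proj{}{P}\,\dr s) = (\dr_{SP})\mun\,\dr_{SP}\,s = s$ for $s \in \Span S_q$, using $\proj{}{P}\,\dr s = \dr_{SP}(s)$ and $H = (\dr_{SP})\mun$. Two things follow at once: $\Span S \cap \ker\dr = 0$, so it suffices to analyse the degree-$q$ slice $x + \Span S_q$ of $x + \Span S$; and the last assertion of the lemma, since $z_q(s) = s - h_{q-1}\dr_q s = 0$ for $s \in \Span S_q$, i.e. $z_q \circ \inc{S_q}{} = 0$.

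Next I would check $z_q(x) \in (x + \Span S_q)\cap\ker\dr$. It lies in the affine subspace because $z_q(x) - x = -h_{q-1}\dr_q(x) \in \im h \subseteq \Span S_q$. For the cycle condition I use perfectness: \Cref{def:reduction}(2.) gives $\dr g = g d = 0$, and left-multiplying \Cref{def:reduction}(5.), namely $gf = \id - \dr h - h\dr$, by $\dr$ collapses (via $\dr\dr = 0$ and $\dr g = 0$) to $\dr h\dr = \dr$; hence $\dr z_q(x) = \dr x - \dr h_{q-1}\dr_q x = 0$.

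For uniqueness, take $y \in (x + \Span S)\cap\ker\dr$ and write $y = x + s$, $s \in \Span S$. Applying $h$ to $\dr y = 0$ and using $h\dr s = s$ (valid degreewise, hence on all of $\Span S$) gives $h\dr x = -s$, so $z_q(x) = x - h\dr x = x + s = y$. This proves $(x+\Span S)\cap\ker\dr = \{z_q(x)\}$. The particular case is then immediate: for $c \in \Span C_q$ one has $z_q(c) = g_q(c)$ — this holds on critical cells (as recalled just before the lemma, through \Cref{def:reduction}(5.) and the action of $f,g,h$ on $C$) and hence on $\Span C_q$ by linearity — so $(c+\Span S)\cap\ker\dr = \{g_q(c)\}$.

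The cohomological half requires no new computation, only the fact that transposing the five reduction identities turns $(f,g,h)$ into a reduction $(g^\ast, f^\ast, h^\ast)$ from $(K, \dr^\ast)$ to $(\Span C, 0)$, still perfect. In it $h^\ast$ is the homotopy, and $\Span P$ now plays the role $\Span S$ played above (indeed $\im h^\ast \subseteq \Span P$ and $h^\ast$ kills $\Span P \oplus \Span C$), while $z^q = \id - h_q^\ast\dr_{q+1}^\ast$ is precisely the canonical cycle map of this dual reduction in degree $q$. Applying the part already proved gives $(x + \Span P)\cap\ker\dr^\ast = \{z^q(x)\}$ and $z^q\circ\inc{P_q}{} = 0$. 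The argument is mostly bookkeeping; the only real content is $h\dr|_{\Span S} = \id_{\Span S}$, so the points needing care are keeping the degree indices aligned ($h_{q-1}$ against $\dr_q$, the single-degree $z_q$ against the graded subspace $\Span S$) and checking that transposition sends the reduction axioms to themselves in the right order.
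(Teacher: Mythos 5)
Your proof is correct, and its skeleton (membership of $z_q(x)$ in the affine slice, membership in $\ker\dr$, uniqueness, then duality via the reduction $(g^\ast,f^\ast,h^\ast)$ of \Cref{prop_hvdf-dual-reduction}) matches the paper's; the genuine difference is the engine of the uniqueness step. The paper forms the difference $y := z_q(x)-(x+s)$ of two elements of the intersection, observes that $y$ is a cycle lying in $\Span S$, and kills it by expanding $y = gf(y)+\dr h(y)+h\dr(y)$ with $f\circ\inc{S}{}=0$, $h\circ\inc{S}{}=0$ and $\dr y=0$. You instead extract from the block form of $h$ the identity $h\dr\circ\inc{S}{}=\id_{\Span S}$ (i.e.\ $(\dr_{SP})^{-1}\dr_{SP}=\id$), apply $h$ to $\dr(x+s)=0$, and solve for $s$. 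The two mechanisms are equivalent consequences of the invertibility of $\dr_{SP}$, but yours delivers $\Span S\cap\ker\dr=\{0\}$ and $z_q\circ\inc{S_q}{}=0$ as immediate by-products (the paper leaves the latter claim of the lemma implicit in its written proof), and it makes explicit the degree bookkeeping that reduces $x+\Span S$ to its degree-$q$ slice, which the paper glosses over. Your argument for the cycle condition ($\dr h\dr=\dr$, obtained by left-multiplying axiom 5 by $\dr$ and using $\dr g=gd=0$) is a harmless rearrangement of the paper's observation that $z_q(x)=gf(x)+\dr h(x)$ is a sum of cycles. Both routes are sound; no gaps.
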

\begin{proof}
The proof can be found in the appendix (see~\proofref{lem_canonical-cycle}).
\end{proof}

A core result of~\cite{aldo-phd_thesis} is the following: in a non perfect \HC, it is always possible to pair some critical cells (i.e. transform two critical cells to one primary and one secondary cell) in order to obtain a perfect \HC.
In particular, this proves the existence of a perfect \HC\ for every cellular complex.
This process is called \emph{the completion} of a \HC:
\begin{proposition}[completion of a \HC~\cite{aldo-phd_thesis}]\label{coro:hdvf-completion}
    Given two complexes $\cplx\subseteq \cplxp$ and a \HC\ $X=(P,S,C)$ for $\cplx$, there exists a perfect \HC\ $X'=(P',S',C')$ for $\cplxp$ such that $P\subseteq P'$ and $S \subseteq S'$.
\end{proposition}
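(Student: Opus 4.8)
The plan is to establish the two requirements of the statement in turn, while keeping $P$ and $S$ monotone so that the inclusions $P\subseteq P'$ and $S\subseteq S'$ come for free. First I would enlarge the ambient complex without touching $P$ or $S$, and then I would repeatedly cancel pairs of critical cells until the reduced boundary vanishes.

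\textbf{Step 1 (passing to the larger complex).} Starting from $X=(P,S,C)$ on $\cplx$, declare every cell of $\ols{K'}\setminus\ols{K}$ to be critical, i.e. set $X_0=(P,S,C_0)$ with $C_0 = C\cup(\ols{K'}\setminus\ols{K})$. This is a partition of $\ols{K'}$, and since $P,S\subseteq\ols{K}$ and $\dr'$ agrees with $\dr$ on $K$, we have $\dr'_{SP}=\dr_{SP}$, which is invertible by hypothesis. Hence $X_0$ is a \HC\ for $\cplxp$, and it suffices to complete \emph{any} \HC\ into a perfect one without ever removing a cell from $P$ or $S$.

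\textbf{Step 2 (cancelling a pair of critical cells).} The core is the following lemma. Let $Y=(P,S,C)$ be a \HC\ for $\cplxp$ whose reduced boundary $d=\dr_{CC}-\dr_{SC}H\dr_{CP}$ (with $H=(\dr_{SP})\mun$) is nonzero. Since $d$ sends $\Span C_q$ into $\Span C_{q-1}$, there are $\gamma\in C_q$ and $\gamma'\in C_{q-1}$ with $\langle\gamma',d\gamma\rangle\neq 0$. I claim that $Y'=(P\cup\{\gamma'\},\,S\cup\{\gamma\},\,C\setminus\{\gamma,\gamma'\})$ is again a \HC. Ordering rows by $P\cup\{\gamma'\}$ (with $P$ first) and columns by $S\cup\{\gamma\}$ (with $S$ first), the matrix of $\dr_{(S\cup\{\gamma\})(P\cup\{\gamma'\})}$ has block form
$$
\begin{pmatrix} \dr_{SP} & \dr_{\{\gamma\}P}\\[2pt] \dr_{S\{\gamma'\}} & \langle\gamma',\dr\gamma\rangle \end{pmatrix},
$$
whose top-left block $\dr_{SP}$ is invertible. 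By the Schur-complement criterion the whole matrix is invertible iff the scalar $\langle\gamma',\dr\gamma\rangle-\dr_{S\{\gamma'\}}\,H\,\dr_{\{\gamma\}P}$ is nonzero; a short computation (using $\proj{}{P}\dr\gamma=\dr_{CP}\gamma$ and $\proj{}{C}\dr y = \dr_{SC}y$ for $y\in\Span S$) identifies this scalar with $\langle\gamma',\dr\gamma-\dr_{SC}H\dr_{CP}\gamma\rangle=\langle\gamma',d\gamma\rangle\neq 0$. Hence $Y'$ is a \HC, and $|C\setminus\{\gamma,\gamma'\}|=|C|-2$.

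\textbf{Conclusion and main obstacle.} Applying this lemma repeatedly to $X_0$, the number of critical cells strictly decreases and is bounded below by $0$, so the process terminates — necessarily at a \HC\ $X'=(P',S',C')$ with $d=0$, i.e. a perfect \HC\ (\Cref{def:perfect-hc}). Since at every step a cell is only ever \emph{added} to $P$ or to $S$, we get $P\subseteq P'$ and $S\subseteq S'$, as required. The main obstacle is precisely the block-matrix identity in Step 2: one must check that the Schur complement of $\dr_{SP}$ inside the enlarged boundary matrix is exactly the $(\gamma,\gamma')$-coefficient of the reduced boundary $d$. This is the \HC-theoretic generalization of the classical discrete-Morse fact that two critical cells joined by a single gradient path can be cancelled; over an arbitrary field one additionally has to track signs so that the Schur complement equals $\pm\langle\gamma',d\gamma\rangle$, whereas over $\Z/2\Z$ the coefficient is simply $1\neq 0$ and no bookkeeping is needed.
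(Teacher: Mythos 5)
Your proof is correct, and it is essentially the argument the paper relies on: the proposition is only cited from the thesis, but the text describes the completion exactly as you carry it out, namely declaring the new cells critical and then repeatedly pairing two critical cells into a secondary--primary pair whenever the reduced boundary $d$ is nonzero. Your Schur-complement computation correctly identifies the cancellation condition with $\langle\gamma',d\gamma\rangle\neq 0$ (the signs already match over any field, so no extra bookkeeping is needed), and monotonicity of $P$ and $S$ is immediate from the construction.
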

% \begin{corollary}
%     Given two complexes $\cplx\subseteq \cplxp$ and a \HC\ $X=(P,S,C)$ for $\cplx$, there exists a perfect \HC\ $X'=(P',S',C')$ for $\cplxp$ such that $P\subseteq P'$ and $S \subseteq S'$.
% \end{corollary}
% \begin{proof}
% $X^\ast = (P,S, C\cap (\ols{K'}\bs \ols{K}))$ is a \HC\ for $\cplxp$ because $\dr'_{SP} = \dr_{SP}$ is invertible. Then, using~\Cref{prop_hdvf-completion} on $X^\ast$ we obtain $X'$ a perfect \HC\ for $\cplxp$ such that $P\subseteq P'$ and $S \subseteq S'$.
% \end{proof}
Three operations were defined in~\cite{aldo-phd_thesis} to locally modify a \HC\ (and hence homology an cohomology generators) by swapping the status of two set of cells.
We present two of them, that are used in our proofs:
\begin{definition}[\HC\ operations~\cite{aldo-phd_thesis}]\label{def:generalized-hdvf-operators}
  Given a \homc\ $X=(P,S,C)$ with induced reduction $(f,g,h)$ and $\Pi\subseteq P$,\ $\Sigma\subseteq S$ and $\Gamma\subseteq C$.
  The $\W$ operation swaps secondary cells with critical cells whereas the $\M$ operation swaps primary cells with critical cells:
  \begin{center}
      $\W_{\Sigma}^{\Gamma} X = \left(P,\ (S\cup\Gamma)\bs\Sigma,\ (C\cup\Sigma)\bs\Gamma \right) \qquad \M_{\Pi}^{\Gamma} X = \left( (P\cup\Gamma)\bs\Pi,\ S,\ (C\cup\Pi)\bs\Gamma \right)$
  \end{center} 
  $\W_{\Sigma}^{\Gamma} X$ is a \HC\ if and only if $\proj{}{ \Sigma} \circ g \circ \inc{ \Gamma}{}$ is invertible.
  $\M_{\Pi}^{\Gamma} X$ is a \HC\ if and only if $\proj{}{ \Gamma} \circ f \circ \inc{ \Pi}{}$ is invertible\footnote{\add{By a cardinality argument on critical cells, valid $\W$ and $\M$ operations transform a perfect \HC\ into another perfect \HC.}}. See~\Cref{fig:hc-examples} for illustrations.
  % The proof for the validity of these operations directly follows from that of~\Cref{prop_validity-hc-operators}, using matrix block products and identity matrices instead of matrix products and $1$s.
\end{definition}
Recall that these results only hold for homology over a field, and some limitations appear when we consider homology over a ring.

\begin{figure}
    \cheat{0.02}
    \centering
    \includegraphics[width = 0.95\textwidth]{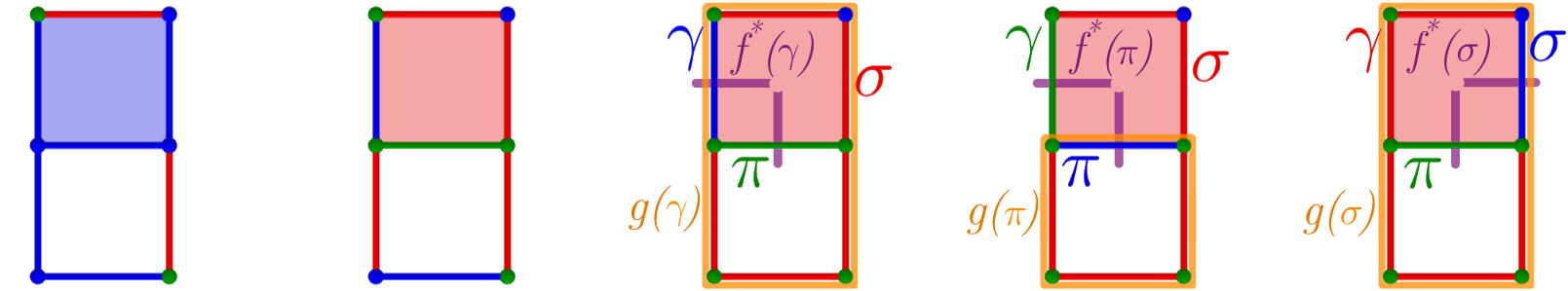}
    \caption{Five \HCs\ on the same complex: $X_1, \dots, X_5$. $X_1$ and $X_2$ are not perfect as the number of \blue{critical} $q$-cells is larger than the corresponding $q$-th Betti number. $X_2$ is a completion of $X_1$ (\add{three pairs of \red{secondary}-\green{primary} cells were added}) and $X_3$ is a completion of $X_2$. $X_3$, $X_4$ and $X_5$ are perfect. Using~\Cref{def:generalized-hdvf-operators} we can write $ X_4 = \M_{\{\pi\}}^{\{\gamma\}} X_3$ and $X_5 = \W_{\{\sigma\}}^{\{\gamma\}} X_3$.
    Note that $\W$ and $\M$ respectively preserves the homology generators (in orange) and cohomology generators (in purple) of the cells involved in the operation.}
    \label{fig:hc-examples}
\end{figure}

\cheat{0.03}
\section{\HC\ and Homology Bases}\label{sect:generators}
    Given a complex, a number of homology bases can be obtained using a perfect \HC, while others cannot (see~\Cref{fig:non-explicit-basis} for examples). \add{This section is our main contribution. We first define and  characterize bases induced by HDVFs (we call them ``explicit'' bases). In \Cref{thm_hdvf-generators-explicit} we show that the homology basis induced by a \HC\ is \explicit.
    Then, in \Cref{thm_explicit-generators-hdvf}, we derive the construction of a \HC\ from an \explicit\ homology basis.
    As a result, we prove that a homology basis is \explicit\ if and only if it is induced by a \HC.}
    %\commentAlex{A similar result can be obtained for cohomology bases, but we only present the homology one as it is more intuitive (see... for more details - lien sur le petit topo sur la cohomologie)}.}
    %In this section, we characterize bases induced by HDVFs (we call them ``explicit'' bases).
    
    Actually, this result goes beyond HDVFs and we show in section~\ref{sect:charac-consequences} that it extends to other standard approaches (such as persistent homology, Smith Normal Form or tri-partitions).
    %This is mainly a consequence of the combinatorial properties that a \HC\ must satisfy.
    %In this section, we investigate the properties of homology bases induced by \HCs.
    %First, we introduce the notion of \explicit\ homology basis.
    
    %More precisely, we then show that the homology basis induced by a \HC\ is \explicit\ (\Cref{thm_hdvf-generators-explicit}).
    %Finally, we derive the construction of a \HC\ from an \explicit\ homology basis (\Cref{thm_explicit-generators-hdvf}).
    %As a result, we prove that a homology basis is \explicit\ if and only if it is induced by a \HC.

\subsection{\Explicit\ homology bases}
    %We first introduce a class of homology bases, namely the class of 
    In this section we  introduce \textit{\explicit\ homology bases} and their properties.
    In what follows, $\cplx$ is a chain complex derived from a cellular complex $\mathscr C$.
    \add{The following results tightly intricate combinatorics (cells) and algebra (chains), which is consistent with the nature of \HCs.} 
    To avoid confusion between chains and cells, we use lowercase Greek letters for cells and lowercase Latin letters for chains.
    Cells are also considered as chains, therefore we identify a cell $\tau$ with the chain consisting of one $\tau$.
    Given a chain $x \in K$ we denote $\ols x$ the set of cells that appear in it: $\ols x := \left\{ \tau\in \ols K \ /\ \langle \tau, x \rangle \neq 0 \right\}$ (with $\langle \cdot, \cdot \rangle$ the standard inner product on $K$).
    As a consequence, if $A$ is a set of cells we have $a\in\Span A \equi \ols{a} \subseteq A$.
    
    \Cref{prop_explicit-bases-equivalence} gives various equivalent definitions of explicit bases and \Cref{prop_elementary-generators} shows that the generators of an \explicit\ basis are elementary.
    First, let us introduce the notion of sub-complex induced by a set of cells:

    \begin{definition}[induced complex]\label{def:induced-complex}
        Given a set of cells $A$ in a complex $\cplx$, we denote $(K(A),\dr)$ the minimum sub-chain complex containing $A$.
        Precisely, $K(A)_q$ is generated by the faces of dimension $q$ of cells in $A$.
    \end{definition}
    % The following properties follow directly from this definition: \commentYS{they are not used}
    % \begin{lemma}[properties of the induced complex]\label{lem_induced-complex-properties}
    %     ~
    %     \begin{enumerate}
    %         \item If $A \subseteq K_q$, $(K(A),\dr)$ is a complex of dimension $q$ ;
    %         \item $A \subseteq B \implies (K(A),\dr) \subseteq (K(B),\dr)$ ;
    %         \item $K(A\cup B) = K(A) \cup K(B)$
    %     \end{enumerate}
    % \end{lemma}

    Given a $q$-homology basis $(\g_i)_{i\leq\beta}$ for a complex $\cplx$ and $J \subseteq \{1, \dots, \beta\}$ a set of indices, we denote $K^J := K\left(\bigcup_{i \in J} \ols{\g_i}\right)$ the space of chains generated by the cells in the generators\footnote{\add{Note that $K^J_q$ is the span of a combinatorial object ($\cup_{i\in J} \ols{\g_i}$); it is not necessarily equal to $\Span\left((\g_i)_{i\in J}\right)$ (the ``natural'' algebraic object defined from generators).}} indexed by $J$.
    We denote $\iota_q^J$ the homology map induced by the inclusion $(K^J,\dr) \subseteq (K,\dr)$.
    With this notation, we also denote $(K^\beta,\dr) := (K^{\{1, \dots, \beta\}},\dr)$ the sub-complex induced by the whole basis (see~\Cref{fig:non-explicit-basis}) and $\iota_q^\beta$ its associated homology map.
    
    \begin{definition}[\Explicit\ homology bases]\label{def:explicit-bases}
        A $q$-homology basis $(\g_i)_{i\leq\beta}$ for a complex $\cplx$ is said to be \emph{\explicit} if it satisfies the following property:
        $$
            \forall k,\ \ols{\g_k}\bs\bigcup\nolimits_{i \neq k} \ols{\g_i} \neq \emptyset \qquad \tx{and}\qquad
            dim \left( H_q\left(K^{\beta}\right) \right) = \beta
        $$
        Intuitively, a homology basis is \explicit\ if and only if no generator is included in the others and the sub-complex induced by the generators have no more than $\beta$ holes of dimension $q$ (see~\Cref{fig:non-explicit-basis} for examples of non \explicit\ homology bases).
    \end{definition}
    
    \begin{figure}
        % \cheat{0.02}
        \centering
        \begin{tabular}{ccccc}
           \includegraphics[width = 0.17\textwidth]{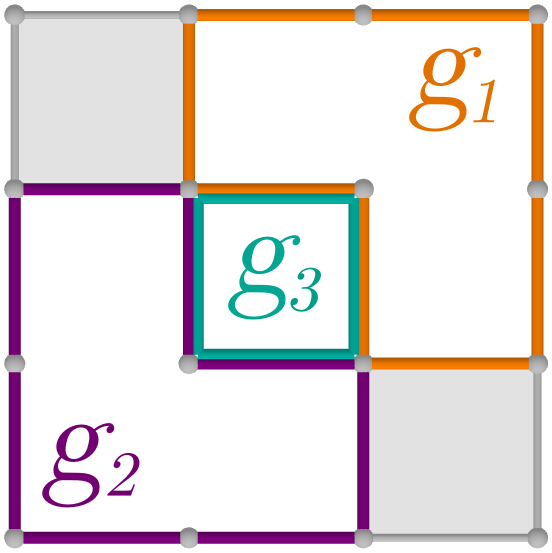}  &  
           \hspace{0.05\textwidth} &
           \includegraphics[width = 0.12\textwidth]{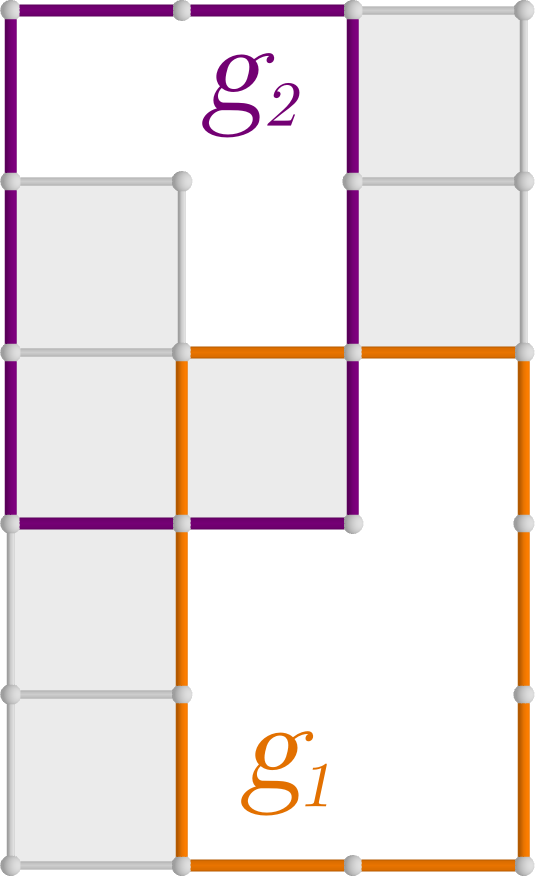} \hspace{0.005\textwidth} 
           \includegraphics[width = 0.12\textwidth]{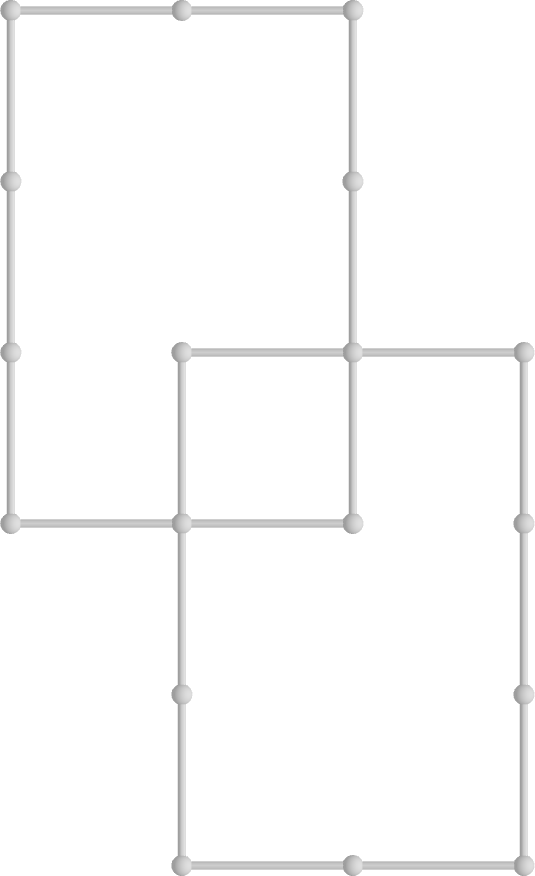} &
           \hspace{0.05\textwidth} &
           \includegraphics[width = 0.18\textwidth]{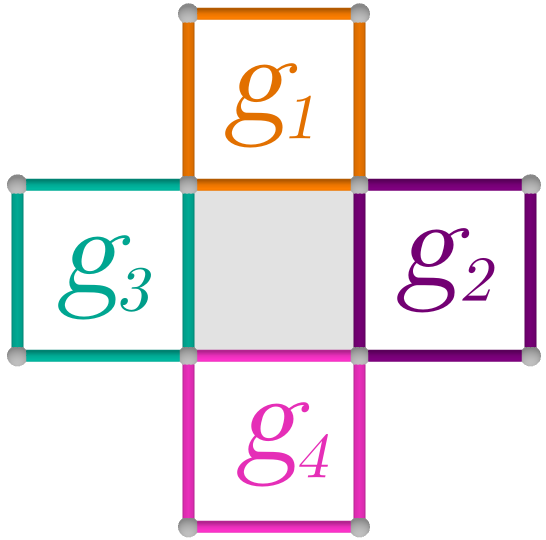} \hspace{0.005\textwidth}  
           \includegraphics[width = 0.18\textwidth]{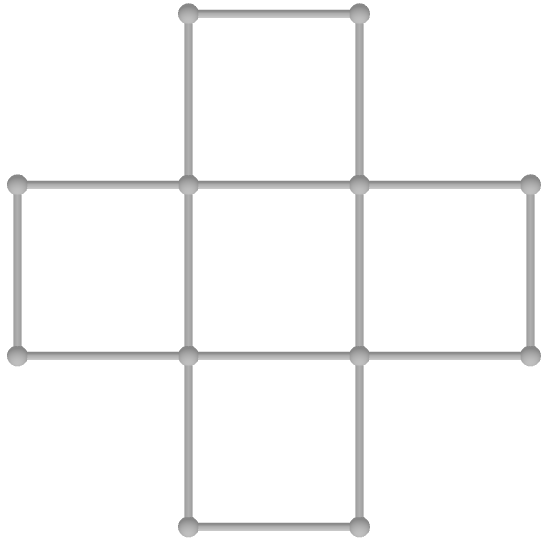} \\
           $(K_1)$ & & $(K_2)$ & & $(K_3)$ \\
        \end{tabular}
         
        \caption{Examples of $1$-homology bases which are not \explicit\ (right images: sub-complexes induced by the basis). By~\Cref{thm_hdvf-generators-explicit} they cannot be obtained from a \HC.\\
        %In the first image, 
        In $(K_1)$, $\ols{\g_3} \bs (\ols{\g_1} \cup \ols{\g_2}) = \emptyset$\ so the basis is not \explicit.
        %In the second example, 
        In $(K_2)$, the complex $(K^\beta,\dr)$ induced by the basis has three 1-holes instead of two: $3 = dim \left( H_1\left(K^{\beta}\right) \right) \neq \beta = 2$.
        Similarly, in $(K_3)$, the complex induced by the basis has five 1-holes instead of four.}
        \label{fig:non-explicit-basis}
    \end{figure}
    %\cheat{0.02}
    
    \begin{proposition}[\Explicit\ bases equivalent definitions]\label{prop_explicit-bases-equivalence}
        Given a $q$-homology basis $(\g_i)_{i\leq\beta}$ for a complex $\cplx$, the three following properties are equivalent:
        \begin{align*}
        1.\quad & \forall k,\ \ols{\g_k}\bs\bigcup\nolimits_{i \neq k} \ols{\g_i} \neq \emptyset \quad \tx{and}\quad
         \forall J,\ \iota_q^J: H_q\left(K^J\right) \to H_q(K) \ \tx{injective of rank $|J|$}\\
        2.\quad & \forall J\subseteq \{1, \dots, \beta\}, \quad
        \ker\dr \cap K^J_q \subseteq \Span\big((\g_i)_{i\in J}\big)\\
        3.\quad & \forall k,\ \ols{\g_k}\bs\bigcup\nolimits_{i \neq k} \ols{\g_i} \neq \emptyset \quad \tx{and}\quad
        dim \left( H_q\left(K^{\beta}\right) \right) = \beta
        \end{align*}
        %$$
        %\forall k,\ \ols{\g_k}\bs\bigcup_{i \neq k} \ols{\g_i} \neq \emptyset \qquad %\tx{and}\qquad
        %\beta_q\left(K^k\right) = k
        %$$
    \end{proposition}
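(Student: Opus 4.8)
The plan is to hang the whole equivalence on one structural fact about the induced complexes $K^J$ and then turn each of the three conditions into a single dimension count. First I would record the following preliminary observations. Since each $\g_i$ is a $q$-cycle, every cell of $\ols{\g_i}$ is a $q$-cell, so the generating set $\bigcup_{i\in J}\ols{\g_i}$ of $K^J$ consists only of $q$-cells; as a $q$-cell has faces only in dimensions $\le q$, \Cref{def:induced-complex} gives $K^J_{q+1}=0$ and $K^J_q=\Span\big(\bigcup_{i\in J}\ols{\g_i}\big)$, hence $K^J$ has no $(q+1)$-boundary and
$$H_q(K^J)\;=\;\ker\dr\cap K^J_q,$$
with $\iota_q^J$ becoming, under this identification, the map that sends a $q$-cycle of $K^J$ to its class in $H_q(K)$. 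I would also note two facts used throughout: $(\g_i)_{i\le\beta}$ is linearly independent in $K_q$ (otherwise its homology classes would be dependent, against \Cref{def:generators-homology-bases}), so $\Span\big((\g_i)_{i\in J}\big)$ is a $|J|$-dimensional subspace of $\ker\dr\cap K^J_q$; and $\iota_q^J$ always has rank at least $|J|$, since its image contains the independent classes $[\g_i]^K_q$, $i\in J$.

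With these in hand I would collapse conditions (1) and (2) to a common statement. For a fixed $J$, the clause ``$\iota_q^J$ injective of rank $|J|$'' is equivalent to ``$\dim(\ker\dr\cap K^J_q)=|J|$'': from $|J|\le\operatorname{rank}\iota_q^J\le\dim(\ker\dr\cap K^J_q)$ the dimension hypothesis forces equality and injectivity, and conversely injectivity with rank $|J|$ yields the dimension. Since $\Span\big((\g_i)_{i\in J}\big)$ is a $|J|$-dimensional subspace of $\ker\dr\cap K^J_q$, ``$\dim(\ker\dr\cap K^J_q)=|J|$'' is in turn equivalent to the inclusion $\ker\dr\cap K^J_q\subseteq\Span\big((\g_i)_{i\in J}\big)$ of (2). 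Finally I would observe that the private-cell clause $\ols{\g_k}\bs\bigcup_{i\ne k}\ols{\g_i}\ne\emptyset$ is forced once these inclusions hold for every $J$: taking $J=\{1,\dots,\beta\}\bs\{k\}$ would otherwise place $\g_k\in\Span\big((\g_i)_{i\ne k}\big)$, contradicting the linear independence above. Hence both (1) and (2) are equivalent to ``$\dim(\ker\dr\cap K^J_q)=|J|$ for all $J$'', so $(1)\Leftrightarrow(2)$.

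It remains to link (3). The implication $(2)\Rightarrow(3)$ drops out of the previous step (take $J=\{1,\dots,\beta\}$ for the equality $\dim H_q(K^\beta)=\beta$, the private-cell clause being already shown). For $(3)\Rightarrow(2)$ the difficulty is that (3) controls only the single complex $K^\beta$: from $\dim H_q(K^\beta)=\dim(\ker\dr\cap K^\beta_q)=\beta$ and the $\beta$-dimensional subspace $\Span\big((\g_i)_{i\le\beta}\big)$ I obtain $\ker\dr\cap K^\beta_q=\Span\big((\g_i)_{i\le\beta}\big)$, and I must propagate this to an arbitrary $J$. Given $z\in\ker\dr\cap K^J_q\subseteq\ker\dr\cap K^\beta_q$, write $z=\sum_{i\le\beta}\lambda_i\g_i$; if $\lambda_k\ne0$ for some $k\notin J$, solving for $\g_k$ yields $\ols{\g_k}\subseteq\ols z\cup\bigcup_{i\ne k}\ols{\g_i}\subseteq\bigcup_{i\ne k}\ols{\g_i}$ (the last inclusion because $\ols z\subseteq\bigcup_{i\in J}\ols{\g_i}$ and $k\notin J$), contradicting the private-cell clause of (3). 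Hence $z\in\Span\big((\g_i)_{i\in J}\big)$, which is exactly (2).

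I expect the only genuinely delicate implication to be $(3)\Rightarrow(2)$, where the combinatorial private-cell hypothesis has to be invoked to rule out a generator being hidden inside a partial union of the others; everything else is linear-algebra bookkeeping. A smaller point I would be careful about is the identification $H_q(K^J)=\ker\dr\cap K^J_q$, which rests on $K^J_{q+1}=0$ — without that the dimension counts would be meaningless.
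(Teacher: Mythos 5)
Your proof is correct; every step checks out, including the two points you flag as delicate (the identification $H_q(K^J)=\ker\dr\cap K^J_q$ via $K^J_{q+1}=0$, and the support argument ruling out coefficients $\lambda_k\neq 0$ for $k\notin J$). The ingredients are the same as the paper's, but the decomposition is genuinely different. The paper proves the cycle $1\Rightarrow 2\Rightarrow 3\Rightarrow 1$, working throughout at the level of homology classes: in each implication it expresses a class $[x]^J$ or $[x]^\beta$ in the basis $([\g_i])$, lifts the relation to chains, and kills the correction term $\dr y$ by the observation that $K^J$ has no $(q+1)$-chains. You instead make that observation the pivot: once $H_q(K^J)=\ker\dr\cap K^J_q$ is recorded, conditions 1 and 2 both collapse to the single dimension count $\dim(\ker\dr\cap K^J_q)=|J|$ for all $J$, and condition 3 is the case $J=\{1,\dots,\beta\}$ plus the private-cell clause, which you then propagate to arbitrary $J$ by the same support argument the paper uses in its $3\Rightarrow 1$ step (the paper phrases it as a nonzero inner product $\langle x,\gamma\rangle$ with a private cell $\gamma$ of $\g_j$; yours via $\ols{\g_k}\subseteq\ols{z}\cup\bigcup_{i\neq k}\ols{\g_i}$ — these are the same argument). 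What your organization buys is economy: the homology-class bookkeeping is done once rather than three times, and it isolates exactly where the combinatorial hypothesis is indispensable, namely in passing from the full index set to a proper subset $J$. What the paper's version buys is that its intermediate facts (e.g.\ that $([\g_i]^J)_{i\in J}$ is a basis of $H_q(K^J)$) are stated in the form directly reused in the proofs of \Cref{lem_explicit-generator-complex} and \Cref{thm_explicit-generators-hdvf}.
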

    \begin{proof}
        The proof can be found in the appendix (see~\proofref{prop_explicit-bases-equivalence}).
    \end{proof}

    \add{Interestingly, all these definitions combine combinatorics (cells) and algebra (chains), but cannot be fully captured by either. Even 2. of~\Cref{prop_explicit-bases-equivalence} entails an intersection with $K^J_q$, which definition relies on cells.
    This dual nature of bases computed by standard approaches is an important result of the present article.}
    Moreover, we have the following property:
    \begin{proposition}\label{prop_elementary-generators}
        The generators of an \explicit\ homology basis are \emph{elementary}.
        A cycle is said to be \emph{elementary} if it is not composed of smaller (non-zero) cycles.
    \end{proposition}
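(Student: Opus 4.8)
The plan is to read everything off characterization~2 of \Cref{prop_explicit-bases-equivalence}, namely that for an \explicit\ basis one has $\ker\dr\cap K^J_q\subseteq\Span\big((\g_i)_{i\in J}\big)$ for every $J$, specialised to a singleton $J=\{k\}$. So I would argue by contradiction: assume some generator $\g_k$ of an \explicit\ $q$-homology basis $(\g_i)_{i\leq\beta}$ is \emph{not} elementary, i.e. $\g_k=z'+z''$ for two non-zero $q$-cycles $z',z''$ whose supports are disjoint, $\ols{z'}\cap\ols{z''}=\emptyset$ (so that $\ols{z'}$ and $\ols{z''}$ are both strictly smaller than $\ols{\g_k}$). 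Note that $z''=\g_k-z'$ forces $z''\in\ker\dr$ automatically, but we will only use $z'\in\ker\dr$.

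Next I would locate $z'$ inside the induced complex. Since $\ols{z'}\subseteq\ols{\g_k}$ and all cells occurring in a $q$-cycle have dimension exactly $q$, each cell of $\ols{\g_k}$ is its own $q$-dimensional face, so $\ols{\g_k}\subseteq K^{\{k\}}_q$ by \Cref{def:induced-complex}; hence $z'\in\Span\ols{z'}\subseteq\Span\ols{\g_k}\subseteq K^{\{k\}}_q$. As $z'\in\ker\dr$ as well, \Cref{prop_explicit-bases-equivalence}(2.) with $J=\{k\}$ yields $z'\in\Span(\g_k)$, i.e. $z'=\lambda\,\g_k$ for some scalar $\lambda\in\mathcal F$. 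Because $z'\neq 0$ we get $\lambda\neq 0$, so $\ols{z'}=\ols{\lambda\,\g_k}=\ols{\g_k}$, and therefore $\ols{z''}\subseteq\ols{\g_k}\setminus\ols{z'}=\emptyset$, contradicting $z''\neq 0$. Thus every $\g_k$ is elementary.

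I expect the only slightly delicate point to be the inclusion $\ols{z'}\subseteq\ols{\g_k}\ \Rightarrow\ z'\in K^{\{k\}}_q$, which is where the combinatorial definition of $K^{\{k\}}$ enters; it is immediate once one observes that the cells involved all sit in degree $q$, so nothing beyond bookkeeping is required, and the rest is the two-line linear-algebra argument above. (In fact the same reasoning shows $\ker\dr\cap K^{\{k\}}_q=\Span(\g_k)$ is a line, so $\g_k$ carries no non-zero cycle supported on a proper subset of $\ols{\g_k}$, which is the formulation one would use if ``composed of smaller cycles'' is read as ``a proper sub-chain that is itself a non-zero cycle''.)
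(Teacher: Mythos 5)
Your proof is correct and follows essentially the same route as the paper's: both decompose $\g_k$ into two disjointly supported cycles, place them in $\ker\dr\cap K^{\{k\}}_q$, and invoke characterization~2 of \Cref{prop_explicit-bases-equivalence} with $J=\{k\}$ to force one summand to vanish. The only (minor) difference is that your support argument $\ols{\lambda\g_k}=\ols{\g_k}$ for $\lambda\neq 0$ works verbatim over any field, whereas the paper's final step uses the $\Span(\g_k)=\{0,\g_k\}$ shortcut specific to the $\Z/2\Z$ convention.
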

    \begin{proof}
        % !TEX root = ../../main.tex
Let $(\g_i)_{i\leq \beta}$ be an \explicit\ $q$-homology basis of $(K,\dr)$ and $a$, $b$ be cycles such that $\ols{a}\sqcup \ols{b} = \ols{\g_k}$ (and so $a+b = \g_k$).
Denote $J := \{k\}$.
We get that $a$ and $b$ are in $\ker\dr \cap K^J_q$ so by~\Cref{prop_explicit-bases-equivalence}(2.) they are in $\Span(\g_k) = \{0, \g_k\}$.
Hence, either $a$ or $b$ is $0$, otherwise we would have $\g_k = \g_k + \g_k = 0$.
    \end{proof}
    
    %\commentAldo{I think that it is simpler to state the results only for homology, and say at the end that the same is true for cohomology} \commentYS{ good idea}\commentAlex{I agree :)}

\subsection{Characterization of \HC\ homology bases}
    We now prove that the homology bases obtained from \HCs\ are \explicit.
    \begin{theorem}\label{thm_hdvf-generators-explicit}
        Let $X=(P,S,C)$ be a perfect \HC\ for $\cplx$ with reduction $(f,g,h)$.
        Then, for all $q$, its associated $q$-homology basis $\big(g_q(\gamma)\big)_{\gamma\in C_q}$ is \explicit.
    \end{theorem}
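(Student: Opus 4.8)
The plan is to establish condition (2.) of \Cref{prop_explicit-bases-equivalence} for the family $\g_i:=g_q(\gamma_i)$, where I write $C_q=\{\gamma_1,\dots,\gamma_\beta\}$. Since this family is a $q$-homology basis by \Cref{def:perfect-hc}, that proposition then identifies condition (2.) with the definition of \explicit, and we are done. Concretely, I must show that for every $J\subseteq\{1,\dots,\beta\}$ one has $\ker\dr\cap K^J_q\subseteq\Span\big((\g_i)_{i\in J}\big)$.

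The first step is structural. Reading off the block form of $g$ from \Cref{def:hdvf}, when applied to a critical cell $\gamma_i\in C_q$ the chain $g_q(\gamma_i)$ has zero primary component, secondary component $G\gamma_i\in\Span S_q$, and critical component $\gamma_i$ itself; thus $g_q(\gamma_i)=\gamma_i+s_i$ with $s_i\in\Span S_q$, so that $\ols{\g_i}\subseteq S_q\cup\{\gamma_i\}$ and $\gamma_i\in\ols{\g_i}$. This already yields the combinatorial half of explicitness, namely $\gamma_k\in\ols{\g_k}\setminus\bigcup_{i\neq k}\ols{\g_i}$, since $\gamma_k\notin S_q\cup\{\gamma_i\}$ whenever $i\neq k$. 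Taking spans, every cell occurring in $\bigcup_{i\in J}\ols{\g_i}$ lies in $S_q\cup\{\gamma_i:i\in J\}$, hence $K^J_q\subseteq\Span S_q\oplus\Span\{\gamma_i:i\in J\}$.

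The second step extracts the cycle. Let $x\in\ker\dr\cap K^J_q$ and write $x=s+c$ with $s\in\Span S_q$ and $c\in\Span\{\gamma_i:i\in J\}\subseteq\Span C_q$. Then $x$ lies in the coset $c+\Span S$ and is a cycle, so \Cref{lem_canonical-cycle} (applied to $c\in\Span C_q$) forces $x=g_q(c)$. Writing $c=\sum_{i\in J}\lambda_i\gamma_i$ and using linearity of $g_q$, we obtain $x=\sum_{i\in J}\lambda_i\g_i\in\Span\big((\g_i)_{i\in J}\big)$, which is exactly condition (2.).

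The delicate point — and the one that uses the hypotheses in full — is the invocation of \Cref{lem_canonical-cycle} in the last step: one needs $x$ and $g_q(c)$ to be two cycles sitting in the same coset modulo $\Span S$, so that the uniqueness statement of that lemma collapses them. This is where perfectness of $X$ (so $d=0$, and the canonical cycle $z_q(c)=g_q(c)$ genuinely lies in $\ker\dr$) is indispensable: for a non-perfect \HC\ the coset $c+\Span S$ need not meet $\ker\dr$ at all, let alone in a single point. Everything else is bookkeeping with the block structure of $g$, together with the observation that $K^J_q$ is spanned by the $q$-cells that appear in the generators.
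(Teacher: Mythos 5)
Your proof is correct and follows essentially the same route as the paper's: both verify characterization (2.) of \Cref{prop_explicit-bases-equivalence} by decomposing a cycle $x\in\ker\dr\cap K^J_q$ as $c+s$ with $c$ supported on $\{\gamma_i\}_{i\in J}$ and $s\in\Span S$, and then identifying $x$ with $g_q(c)$. The only cosmetic difference is that you invoke the uniqueness statement of \Cref{lem_canonical-cycle} where the paper expands $x = gf(x)+\dr h(x)+h\dr(x)$ via the reduction identities; these amount to the same computation.
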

    \begin{proof}
        This proof uses the characterization 2. of~\Cref{prop_explicit-bases-equivalence} and can be found in the appendix (see~\proofref{thm_hdvf-generators-explicit}).
    \end{proof}

    As stated by the following theorem, the converse also holds: %is also true:
    \begin{theorem}\label{thm_explicit-generators-hdvf}
    Given $(\g_i)_{i\leq\beta_q}$ an \explicit\ $q$-homology basis for $\cplx$, there exists a perfect \HC\ $X=(P,S,C)$ for $\cplx$ with reduction $(f,g,h)$ such that $\big(g_q(\gamma)\big)_{\gamma\in C_q} = (\g_i)_{i\leq\beta_q}$.
    \end{theorem}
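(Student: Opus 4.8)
The plan is to realize the prescribed basis by building a perfect \HC\ in two completion stages. Write $\beta:=\beta_q$. Since the basis is \explicit, \Cref{prop_explicit-bases-equivalence} provides for each $k$ a \emph{distinguishing cell} $\gamma_k\in\ols{\g_k}\bs\bigcup_{i\neq k}\ols{\g_i}$; these $\beta$ cells are pairwise distinct. Put $D:=\big(\bigcup_i\ols{\g_i}\big)\bs\{\gamma_1,\dots,\gamma_\beta\}$, so that $\ols{\g_k}\bs\{\gamma_k\}\subseteq D$ and hence $\g_k-\gamma_k\in\Span D$ for every $k$. The whole construction then reduces to producing a perfect \HC\ $(P,S,C)$ for $\cplx$ with $D\subseteq S$ and $\{\gamma_1,\dots,\gamma_\beta\}\subseteq C$: indeed perfectness forces $|C_q|=\dim H_q(K)=\beta$, so $C_q=\{\gamma_1,\dots,\gamma_\beta\}$, and by \Cref{lem_canonical-cycle} $g_q(\gamma_k)$ is the unique cycle in $\gamma_k+\Span S$; since $\g_k\in\gamma_k+\Span D\subseteq\gamma_k+\Span S$ and $\partial\g_k=0$, this cycle is $\g_k$, giving $\big(g_q(\gamma)\big)_{\gamma\in C_q}=(\g_i)_{i\leq\beta}$.

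For \textbf{Stage 1} I would work inside $K^\beta$. By \Cref{prop_explicit-bases-equivalence}(2.) with $J=\{1,\dots,\beta\}$ we have $\ker\partial\cap K^\beta_q\subseteq\Span\big((\g_i)_i\big)$, and since a distinguishing cell is missing from $D$ no nonzero combination of the $\g_i$ lies in $\Span D$; hence $\ker\partial\cap\Span D=\{0\}$, i.e.\ $\partial_q$ is injective on $\Span D$. A rank argument then produces a set $Q$ of $(q-1)$-cells of $K^\beta$ for which $\partial_{DQ}$ is invertible, so $X_0:=(Q,D,\ols{K^\beta}\bs(Q\cup D))$ is a \HC\ for $K^\beta$ in which every $\gamma_k$ is critical. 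Completing $X_0$ inside $K^\beta$ (\Cref{coro:hdvf-completion}) gives a perfect \HC\ $\tilde X=(\tilde P,\tilde S,\tilde C)$ for $K^\beta$ with $D\subseteq\tilde S$. Perfectness gives $|\tilde C_q|=\dim H_q(K^\beta)=\beta$, and since the $q$-cells of $K^\beta$ are exactly $D\sqcup\{\gamma_1,\dots,\gamma_\beta\}$ while $D\subseteq\tilde S$, necessarily $\tilde C_q=\{\gamma_1,\dots,\gamma_\beta\}$; \Cref{lem_canonical-cycle} again yields $g_q^{\tilde X}(\gamma_k)=\g_k$.

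For \textbf{Stage 2} I would extend $\tilde X$ to $\cplx$. As $K^\beta$ is a sub-complex, $X_1:=(\tilde P,\tilde S,\ols K\bs(\tilde P\cup\tilde S))$ is a \HC\ for $\cplx$ whose reduction agrees with that of $\tilde X$ on $K^\beta$; in particular $g(\gamma_k)=\g_k$, hence $\partial g(\gamma_k)=0$ and, $g$ being injective, the reduced boundary satisfies $d(\gamma_k)=0$. I would then run the completion of $X_1$ while never selecting any $\gamma_k$ for a pairing. This is always possible: a pairing step changes $d(\gamma_k)$ (resp.\ $g(\gamma_k)$) only by a multiple of $\langle d(\gamma_k),\cdot\rangle=0$, so the identities $d(\gamma_k)=0$ and $g(\gamma_k)=\g_k$ persist, which already forbids $\gamma_k$ from being the higher cell of any pair. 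The only remaining risk is being forced to pair a critical $(q+1)$-cell $\sigma$ against some $\gamma_k$, which happens only when $d(\sigma)\neq0$ but $\ols{d(\sigma)}\subseteq\{\gamma_1,\dots,\gamma_\beta\}$; writing then $d(\sigma)=\sum_{k\in I}\gamma_k$, the identity $\partial g=g d$ of \Cref{def:reduction} gives $\partial g(\sigma)=\sum_{k\in I}g(\gamma_k)=\sum_{k\in I}\g_k$, so $\sum_{k\in I}[\g_k]^K_q=0$ in $H_q(K)$; as $\big([\g_i]^K_q\big)_i$ is a basis, $I=\emptyset$ and $d(\sigma)=0$ — a contradiction. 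Hence whenever $d\neq0$ there is a valid pairing disjoint from $\{\gamma_1,\dots,\gamma_\beta\}$ (take a critical $\alpha$ with $d(\alpha)\neq0$; then $\alpha\neq\gamma_k$; if $\dim\alpha\neq q+1$ any $\nu\in\ols{d(\alpha)}$ has dimension $\neq q$ hence $\nu\neq\gamma_k$, while if $\dim\alpha=q+1$ the claim above lets us choose $\nu\in\ols{d(\alpha)}\bs\{\gamma_1,\dots,\gamma_\beta\}$). The completion then terminates at a perfect \HC\ $(P,S,C)$ for $\cplx$ with $\tilde S\subseteq S$ and $\{\gamma_1,\dots,\gamma_\beta\}\subseteq C$, which finishes the proof by the first paragraph.

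I expect \textbf{Stage 2} to be the main obstacle: \Cref{coro:hdvf-completion} only guarantees \emph{some} perfect \HC\ and gives no control over which critical cells a completion leaves behind, so the crux is exactly the $\gamma$-avoiding refinement above. Explicitness — through linear independence of the classes $[\g_k]$ — is what prevents a $(q+1)$-cell from forcing the consumption of a $\gamma_k$, while the invariant $d(\gamma_k)=0$ (a consequence of $g(\gamma_k)=\g_k$ being a cycle) is what prevents a $(q-1)$-cell from doing so.
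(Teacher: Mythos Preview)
Your proof is correct and shares the paper's two-stage outline (realize the basis on $K^\beta$, then extend to $K$), but the execution of both stages is genuinely different.

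For Stage~1 the paper builds the perfect \HC\ on $K^\beta$ by induction on the number of generators (\Cref{lem_explicit-generator-complex}), invoking \Cref{lem_injective-hdvf-completion} and a $\W$ operation at each step; you instead argue in one shot that $\partial$ is injective on $\Span D$, pick $Q$ with $\partial_{DQ}$ invertible, complete arbitrarily, and then force $\tilde C_q=\{\gamma_k\}$ by the counting $|\tilde C_q|=\dim H_q(K^\beta)=\beta$ together with $D\subseteq\tilde S$. This is shorter and bypasses the induction entirely. For Stage~2 the paper's \Cref{lem_injective-hdvf-completion} takes an \emph{arbitrary} completion $X''$ and then repairs it a posteriori with an $\M$ operation; you instead steer the completion itself, maintaining the invariants $d(\gamma_k)=0$ and $g(\gamma_k)=\g_k$ and using linear independence of the $[\g_k]$ to rule out the one forcing scenario. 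Your route avoids the $\W$ and $\M$ operations altogether, at the cost of relying on the one-step update formulas for $d$ and $g$ under a single pairing (the $\A$ operation of~\cite{aldo-phd_thesis}, mentioned but not restated in the paper). The paper's packaging into \Cref{lem_injective-hdvf-completion,lem_generator-preservation,lem_explicit-generator-complex} is more modular and reusable (these lemmas recur in \Cref{sect:charac-consequences}); your argument is more self-contained and elementary.
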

    We prove~\Cref{thm_explicit-generators-hdvf} by building a perfect \HC\ from an \explicit\ basis.
    To do so, we need the following lemmas about existence and completion of \HCs.
    Firstly, given $\cplx\subseteq\cplxpdr$, it is possible to complete a perfect \HC\ for $\cplx$ to a perfect \HC\ for $\cplxpdr$ while preserving some critical cells (specifically the ones associated with holes of $\cplx$ that were not filled in $\cplxpdr$):
    % \begin{lemma}[critical cell preservation throughout completion] \label{lem_injective-hdvf-completion}
    %     Let $(K,\dr)\subseteq (K',\dr)$ be two complexes.
    %     Let $X=(P,S,C)$ be a perfect \HC\ for $\cplx$ with reduction $(f,g,h)$.\\
    %     Denote $\iota_q : H_q(K) \to H_q(K')$ be the $q$-homology morphism induced by the inclusion $K\subseteq K'$ and $[g]^K_q = [\cdot]_q^K \circ g_q : \Span C_q \to H_q(K)$ the isomorphism that associates a $q$-critical cell to its corresponding $q$-homology class.
    
    %     Suppose that $C^\ast \subseteq C$ is a subset of critical cells such that $\forall q,\ \iota_q \circ [g]^K_q \circ \inc{C^\ast_q}{C_q} : \Span C^\ast_q \to H_q(K')$ is injective.
    %     Then there exists $X'=(P',S',C')$ a perfect \HC\ for $\cplxpdr$ such that $P\subseteq P'$, $S\subseteq S'$ and $C^\ast \subseteq C'$.
    % \end{lemma}
    \begin{lemma}[critical preservation during completion] \label{lem_injective-hdvf-completion}
        Let $(K,\dr)\subseteq (K',\dr)$ and let $X=(P,S,C)$ be a perfect \HC\ for $\cplx$ with reduction $(f,g,h)$.
        
        Suppose that there is $C^\ast \subseteq C_q$ such that $( [g_q(\gamma)]^{K'} )_{\gamma \in C^\ast}$ is linearly independent in $H_q(K')$.
        Then there exists $X'=(P',S',C')$ a perfect \HC\ for $\cplxpdr$ such that $P\subseteq P'$, $S\subseteq S'$ and $C^\ast \subseteq C'$.
    \end{lemma}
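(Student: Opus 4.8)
The plan is to enlarge $X$ into a (generally non‑perfect) \HC\ for $\cplxpdr$ and then to run the completion of \Cref{coro:hdvf-completion}, but in a controlled way so that no cell of $C^\ast$ is ever paired.

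\textbf{Step 1 (extension).} Set $E := \ols{K'}\bs\ols K$ and $\tilde X := (P,\,S,\,C\cup E)$. Since $P,S\subseteq\ols K$ and $\dr'$ restricts to $\dr$ on $K$, the matrix $\dr'_{SP}$ equals $\dr_{SP}$, hence is invertible, so $\tilde X$ is an \HC\ for $\cplxpdr$. Using the formulas of \Cref{def:hdvf} together with the fact that $\dr'\tau\in K$ for every $\tau\in\ols K$, one checks that the reduction $(\tilde f,\tilde g,\tilde h)$ of $\tilde X$ extends $(f,g,h)$: writing $\tilde D$ for the reduced boundary of $\tilde X$, one gets $\tilde D\gamma = 0$ and $\tilde g_q(\gamma) = g_q(\gamma)$ for every $\gamma\in C$, hence in particular for every $\gamma\in C^\ast$.

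\textbf{Step 2 (protected completion).} By the completion procedure of~\cite{aldo-phd_thesis} (see the paragraph preceding \Cref{coro:hdvf-completion}), one iteratively picks two critical cells $\sigma,\tau$ of the current \HC\ $Y$ with $\langle\tau,d_Y\sigma\rangle\neq 0$ — where $d_Y$ is the current reduced boundary — and turns them into one primary and one secondary cell; this strictly decreases $|C_Y|$ and terminates exactly when $d_Y = 0$. I would show, by induction on the number of pairs already completed, that $\sigma,\tau$ can always be chosen in $C_Y\bs C^\ast$ while keeping the invariant: every $\gamma\in C^\ast$ stays critical in $Y$, satisfies $d_Y\gamma = 0$, and obeys $g^Y_q(\gamma)=g_q(\gamma)$ (here $g^Y$ is the $g$‑map of $Y$). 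Step 1 gives the invariant for $Y=\tilde X$. For the inductive step, assume $d_Y\neq 0$. By the invariant the columns of $d_Y$ indexed by $C^\ast$ vanish; if, moreover, every nonzero entry of $d_Y$ lay in a row indexed by $C^\ast$, then $\im d_Y\subseteq\Span C^\ast$, so any $0\neq y\in\im d_Y$ could be written $y=\sum_{\gamma\in C^\ast}c_\gamma\gamma$ with some $c_\gamma\neq 0$; but $y\in\ker d_Y$ (because $d_Y^2=0$) and $[y]_{d_Y}=0$, so applying the isomorphism $H_q(\Span C_Y,d_Y)\to H_q(K')$ induced by $g^Y$ would give $0 = \sum_{\gamma\in C^\ast}c_\gamma\,[g_q(\gamma)]^{K'}$, contradicting the linear independence hypothesis. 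Hence a valid pair $\sigma,\tau\in C_Y\bs C^\ast$ exists; completing it replaces the reduction of $Y$ by its composite with the reduction of the \HC\ on $(\Span C_Y,d_Y)$ whose only non‑critical cells are $\sigma$ and $\tau$, and since that reduction's $g$‑map fixes every critical $\gamma$ with $d_Y\gamma=0$, the invariant is preserved.

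\textbf{Step 3 (conclusion).} The procedure terminates, since $|C_Y|$ drops by two at each step and never goes below $|C^\ast|$; thus eventually $d_Y = 0$ and the resulting \HC\ $X'=(P',S',C')$ is perfect. Each step only moves cells out of the critical class, so $P=\tilde P\subseteq P'$ and $S=\tilde S\subseteq S'$; and the invariant yields $C^\ast\subseteq C'$. Hence $X'$ is as required. The main obstacle is Step 2 — isolating the invariant that survives a completion step and realizing that the stated linear independence is precisely what prevents the completion from being forced to pair a cell of $C^\ast$; the surrounding bookkeeping (that extending and then locally modifying the \HC\ leaves $g_q$ unchanged on $C^\ast$) is routine but needs care.
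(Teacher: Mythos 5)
Your proof is correct, but it takes a genuinely different route from the paper's. The paper first applies the completion proposition as a black box to get a perfect \HC\ $X''$ for $(K',\dr)$ with $P\subseteq P''$, $S\subseteq S''$; it then uses \Cref{lem_canonical-cycle} to show no cell of $C^\ast$ can have become secondary, uses the linear independence hypothesis to prove that $\proj{}{C''\bs C^\ast}\circ f''\circ\inc{P''\cap C^\ast}{}$ is injective, and finally applies an $\M$ operation to swap the cells of $P''\cap C^\ast$ back into the critical class. You instead open up the completion procedure itself and protect $C^\ast$ throughout, maintaining the invariant that cells of $C^\ast$ stay critical with vanishing reduced boundary and unchanged $g$-image; the linear independence hypothesis enters at the same conceptual point (it is what forbids $\im d_Y\subseteq\Span C^\ast$ when $d_Y\neq 0$, hence guarantees a pairable pair outside $C^\ast$), and your argument there is sound. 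The trade-off: the paper's route only needs the completion proposition and the $\M$ operation, both of which are stated in the paper, at the cost of the somewhat delicate injectivity computations with $f''$; your route is more algorithmic and avoids those computations, but it leans on two facts about the elementary pairing step that the paper deliberately does not expose — that pairing two critical cells $\sigma,\tau$ is valid exactly when $\langle\tau,d_Y\sigma\rangle\neq 0$ (the third operation of~\cite{aldo-phd_thesis}, which the paper omits), and that the reduction of the resulting \HC\ is the composite of the current reduction with the elementary one. Both facts are in the cited thesis and your uses of them (in particular that the elementary $g$-map fixes any critical $\gamma$ with $d_Y\gamma=0$) are correct, but a fully self-contained write-up would need to import and verify them explicitly.
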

    \begin{proof}
        The proof is technical and can be found in the appendix (see~\proofref{lem_injective-hdvf-completion}).
    \end{proof}
    
    Secondly, the critical cells preserved in a completion process keep their associated homology generators:
    \begin{lemma}[generator preservation during completion]\label{lem_generator-preservation}
        Suppose $\cplx\subseteq (K',\dr)$ and let $X=(P,S,C)$ and $X'=(P',S',C')$ be perfect \HCs\ for $\cplx$ and $(K',\dr)$ such that and $S \subseteq S'$.
        Then $\forall x \in K_q$, $z_q(x) = z_q'(x)$.\\
        In particular, $\forall \gamma \in C\cap C'$, $g(\gamma) = g'(\gamma)$.
    \end{lemma}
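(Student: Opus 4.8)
The plan is to reduce everything to the uniqueness statement in~\Cref{lem_canonical-cycle}, which identifies $z_q(x)$ as the \emph{only} cycle contained in the coset $x+\Span S$. The whole argument is then a short ``nested coset'' observation made possible by the hypothesis $S\subseteq S'$.

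First I would record that, since $\cplx\subseteq(K',\dr)$, any $x\in K_q$ also lies in $K'_q$, so both $z_q(x)$ and $z_q'(x)$ are defined. Next I would unpack $z_q(x)=x-h_{q-1}\dr_q x$ and use the block form of $h$ in~\Cref{def:hdvf} to see that $h_{q-1}\dr_q x\in\Span S_q$, hence $z_q(x)\in x+\Span S_q$. Because $S\subseteq S'$ forces $S_q\subseteq S'_q$, this gives $z_q(x)\in x+\Span S'_q\subseteq x+\Span S'$. Moreover $z_q(x)\in\ker\dr$ by~\Cref{lem_canonical-cycle} applied to $X$. Now applying the uniqueness part of~\Cref{lem_canonical-cycle} to the perfect \HC\ $X'$: the coset $x+\Span S'$ contains exactly one cycle, namely $z_q'(x)$. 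Therefore $z_q(x)=z_q'(x)$, which is the first claim.

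For the ``in particular'' clause, I would invoke the remark preceding~\Cref{lem_canonical-cycle}, namely that on critical cells the canonical cycle map coincides with the homology generator map: $z_q(\gamma)=g_q(\gamma)$ for $\gamma\in C_q$, and likewise $z_q'(\gamma)=g_q'(\gamma)$ for $\gamma\in C'_q$. If $\gamma\in C\cap C'$ (say in dimension $q$), then chaining these identities with the equality $z_q(\gamma)=z_q'(\gamma)$ just obtained yields $g(\gamma)=z_q(\gamma)=z_q'(\gamma)=g'(\gamma)$.

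I do not anticipate a real obstacle here. The only points deserving a moment's care are (i) confirming that $h$ genuinely lands in $\Span S$, so that $z_q(x)$ really sits in the coset $x+\Span S$ rather than merely in $x+K$ — this is immediate from the matrix description of $h$ in~\Cref{def:hdvf} — and (ii) using the dimension-wise inclusion $S_q\subseteq S'_q$ to keep the cosets in the same degree, although the coarser $\Span S\subseteq\Span S'$ would already be enough to conclude via~\Cref{lem_canonical-cycle}.
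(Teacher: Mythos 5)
Your proposal is correct and follows essentially the same route as the paper's proof: both identify $z_q(x)$ as a cycle in $x+\Span S\subseteq x+\Span S'$ and then invoke the uniqueness statement of~\Cref{lem_canonical-cycle} for $X'$ to conclude $z_q(x)=z_q'(x)$. The extra details you supply (that $h$ lands in $\Span S$, and the derivation of the ``in particular'' clause from $z_q(\gamma)=g_q(\gamma)$ on critical cells) are consistent with what the paper leaves implicit.
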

    \begin{proof}
        Let $x \in K \subseteq K'$.
        $X$ and $X'$ are perfect so~\Cref{lem_canonical-cycle} gives:
        
        \smallskip
        \qquad$ z_q(x) \in \ker\dr \cap (x + \Span S) \subseteq \ker\dr \cap (x + \Span S') = \{z'_q(x)\}$
    \end{proof}
    
    Finally, given an \explicit\ homology basis, it is possible to build a perfect \HC\ $X^\beta$ for the sub-complex $\left(K^\beta, \dr\right)$ induced by the basis, such that $X^\beta$ is still associated to the same basis:
    \begin{lemma}[\HC\ of the basis complex]\label{lem_explicit-generator-complex}
        Let $(\g_i)_{i\leq\beta}$ be an \explicit\ $q$-homology basis for $\cplx$.
        Then there exists $X^\beta=(P^\beta,S^\beta,C^\beta)$ a perfect \HC\ for $\left(K^\beta, \dr\right)$ with reduction $(f^\beta,g^\beta,h^\beta)$ such that $\big(g^\beta_q(\gamma)\big)_{\gamma\in C^\beta_q} = (\g_i)_{i\leq\beta}$.
    \end{lemma}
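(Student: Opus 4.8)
The goal is to build a perfect \HC\ $X^\beta$ for the induced complex $(K^\beta,\dr)$ whose critical $q$-cells recover exactly the prescribed generators $(\g_i)_{i\leq\beta}$. The plan is to first fix, for each index $k$, a cell $\tau_k \in \ols{\g_k}\setminus\bigcup_{i\neq k}\ols{\g_i}$; these exist precisely because the basis is \explicit\ (first clause of~\Cref{def:explicit-bases}), and they are pairwise distinct. These $\tau_k$ will be the critical $q$-cells of $X^\beta$, so I set $C^\beta_q := \{\tau_1,\dots,\tau_\beta\}$. The heart of the argument is then a dimension count: by~\Cref{def:explicit-bases} we have $\dim H_q(K^\beta)=\beta$, so the $\beta$ classes $[\g_i]^{K^\beta}$ must be linearly independent in $H_q(K^\beta)$ — indeed they span a $\beta$-dimensional space by the second clause, hence form a basis of $H_q(K^\beta)$. (One should note that $\iota_q^\beta$ is injective here, which also follows from~\Cref{prop_explicit-bases-equivalence}(1.), so $(\g_i)_i$ is genuinely a $q$-homology basis of the smaller complex $K^\beta$.)

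**Constructing the \HC.** Next I would invoke~\Cref{coro:hdvf-completion}: starting from the empty/trivial \HC\ on $K^\beta$, there is \emph{some} perfect \HC\ $Y=(P_Y,S_Y,C_Y)$ for $(K^\beta,\dr)$. Its critical $q$-cells $C_{Y,q}$ give a homology basis $(g^Y_q(\gamma))_{\gamma\in C_{Y,q}}$, and $|C_{Y,q}|=\beta$. Now I want to transform $Y$ into an \HC\ $X^\beta$ whose critical $q$-cells are the chosen $\tau_k$ and whose associated generators are the $\g_k$. The tool is the $\W$ and $\M$ operations of~\Cref{def:generalized-hdvf-operators}: since valid $\W$/$\M$ operations turn a perfect \HC\ into a perfect \HC, it suffices to show I can reach a configuration where $\{\tau_1,\dots,\tau_\beta\}$ are the critical $q$-cells. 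Concretely, for each $\tau_k$ that is currently primary or secondary in $Y$, I perform a swap ($\M_{\{\tau_k\}}^{\{\gamma\}}$ or $\W_{\{\sigma\}}^{\{\gamma\}}$ for a suitable critical $q$-cell $\gamma$) to make $\tau_k$ critical; the invertibility condition for such a swap reduces, after the dimension-count above, to a rank/change-of-basis statement which holds because the $\g_k$ form a basis of $H_q(K^\beta)$ and each $\tau_k$ appears in $\g_k$ but in no other generator. After these swaps I have a perfect \HC\ $X^\beta$ with $C^\beta_q=\{\tau_1,\dots,\tau_\beta\}$.

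**Identifying the generators.** It then remains to check $g^\beta_q(\tau_k)=\g_k$ for every $k$. By~\Cref{lem_canonical-cycle}, $g^\beta_q(\tau_k)$ is the \emph{unique} cycle in $(\tau_k+\Span S^\beta)\cap\ker\dr$. On the other hand $\g_k$ is a cycle in $K^\beta_q$ containing $\tau_k$, so it suffices to show $\g_k-\tau_k\in\Span S^\beta$, i.e. that no critical $q$-cell other than $\tau_k$ appears in $\g_k$ — which is exactly how the $\tau_i$ were chosen — together with the fact that no primary $q$-cell can appear either. For the latter I would use $z_q\circ\inc{}{P^\beta_q}=\dots$ — more precisely, the canonical-cycle characterization forces the $P^\beta_q$-component of any cycle to vanish once the $S^\beta_q$ and $C^\beta_q$ components are fixed; writing $\g_k$ in the $P^\beta\!\sqcup\!S^\beta\!\sqcup\!C^\beta$ decomposition and applying~\Cref{lem_canonical-cycle} to its $C^\beta_q$-part $\tau_k$ yields $\g_k=z_q(\tau_k)=g^\beta_q(\tau_k)$. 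Finally one relabels so that $\big(g^\beta_q(\gamma)\big)_{\gamma\in C^\beta_q}=(\g_i)_{i\leq\beta}$ as an indexed family.

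**Main obstacle.** The delicate point is verifying that the $\W$/$\M$ swaps needed to make each $\tau_k$ critical are actually \emph{valid} — i.e. that the relevant $\proj{}{\Sigma}\circ g\circ\inc{}{\Gamma}$ (resp. $\proj{}{\Gamma}\circ f\circ\inc{}{\Pi}$) is invertible, and that one can perform them in sequence without a later swap destroying an earlier one. I expect to handle this by doing all the swaps "simultaneously": take $\Gamma=C_{Y,q}$, and let $\Pi\cup\Sigma$ be the set of those $\tau_k$ currently non-critical, and argue that the combined operation's invertibility matrix is, up to the change of basis sending $(g^Y_q(\gamma))_\gamma$ to $(\g_k)_k$ in $H_q(K^\beta)$, a permutation-like matrix — nonsingular precisely because each $\tau_k$ is a "private" cell of $\g_k$. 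Making this block-matrix bookkeeping precise (and checking the mixed case where some $\tau_k$ are primary and some secondary) is the technical crux; everything else is a direct appeal to~\Cref{coro:hdvf-completion}, \Cref{lem_canonical-cycle}, and the footnote of~\Cref{def:generalized-hdvf-operators}.
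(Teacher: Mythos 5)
Your route is genuinely different from the paper's. The paper proceeds by induction on $k$, building a perfect \HC\ for $(K^{\{1,\dots,k\}},\dr)$ one generator at a time: at each step it invokes \Cref{lem_injective-hdvf-completion} to complete while preserving the previously installed critical cells, identifies the single new critical $q$-cell $\tau$ with $g(\tau)=\g_{k+1}$, and performs one $\W$ swap to relocate it to the chosen private cell $\gamma_{k+1}$. You instead take an arbitrary perfect \HC\ $Y$ on $K^\beta$ and try to reach the target in a single simultaneous swap. Your approach is workable and arguably cleaner (it avoids the induction and the completion-with-preservation lemma on this sub-complex), and your ``identifying the generators'' step is essentially the paper's own argument via \Cref{lem_canonical-cycle}. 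Two of your worries dissolve immediately once you observe that $K^\beta$ has no $(q+1)$-cells: first, $P^\beta_q=\emptyset$ (a primary $q$-cell would need a secondary $(q+1)$-cell to pair with under $\dr_{SP}$), so every $q$-cell is secondary or critical, no $\M$ operation is ever needed, and the ``mixed case'' does not arise; second, homologous $q$-cycles in $K^\beta$ are equal as chains, so the change of basis between $(g^Y_q(\delta))_{\delta\in C_{Y,q}}$ and $(\g_k)_k$ holds at the chain level, not merely in homology --- a fact you need but do not state when you pass from homology classes to inner products with cells.

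The one point you must actually nail down is the validity of the simultaneous swap, which you leave as ``the technical crux,'' and your setup of it is slightly off: taking $\Gamma=C_{Y,q}$ while $\Sigma$ contains only the currently non-critical $\tau_k$ gives a non-square matrix whenever some $\tau_k$ is already critical. The correct choice is $\Gamma=C_{Y,q}\setminus\{\tau_1,\dots,\tau_\beta\}$ and $\Sigma=\{\tau_k\}\cap S_{Y,q}$. The invertibility then follows from your ``privateness'' idea, made precise as follows: writing $\g_i=\sum_{\delta}M_{i\delta}\,g^Y_q(\delta)$ with $M$ invertible (chain-level equality, by the remark above), the full matrix $A_{j\delta}=\langle\tau_j,g^Y_q(\delta)\rangle$ satisfies $(MA^{T})_{ij}=\langle\tau_j,\g_i\rangle$, which is diagonal with nonzero diagonal since $\tau_j$ lies in $\ols{\g_j}$ and in no other $\ols{\g_i}$; hence $A$ is invertible. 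Then, for each $\tau_j$ that is already critical, its row of $A$ is the standard basis vector $e_{\tau_j}$ (since $g^Y_q(\delta)-\delta\in\Span S_{Y,q}$), so expanding the determinant along those rows shows the submatrix $\proj{}{\Sigma}\circ g^Y\circ\inc{\Gamma}{}$ is invertible, i.e.\ $\W_{\Sigma}^{\Gamma}$ is valid and yields the desired perfect \HC. With these two repairs your argument is a complete and correct alternative to the paper's induction.
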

    \begin{proof}
        The construction of this \HC\ is done by induction on $k \in \{0, \dots \beta\}$ and can be found in the appendix (see~\proofref{lem_explicit-generator-complex}).
    \end{proof}
    
    %\commentYS{\Cref{lem_generator-preservation} is also useful to justify the definition of persistent generators.} 
    %The detailed proof of~\Cref{thm_explicit-generators-hdvf} can be found in the appendix (see~\proofref{thm_explicit-generators-hdvf}).
    The full proof of~\Cref{thm_explicit-generators-hdvf} is technical and can be found in appendix (see~\proofref{thm_explicit-generators-hdvf}). However, once previous lemmas are known, it becomes intuitive and~\Cref{fig:hc-explicit-example-1} illustrates it.

    As a result,~\Cref{thm_hdvf-generators-explicit,thm_explicit-generators-hdvf} imply that a homology basis is \explicit\ if and only if it is induced by a perfect \HC.
    Note that the proof of~\Cref{thm_explicit-generators-hdvf} provides an algorithm to build a perfect \HC\ given an explicit homology basis.
    
    % In addition, it is also possible to use duality to define the notion of \explicit\ cohomology bases.
    % The aforementioned theorems can then be translated in terms of cohomology, which gives a geometrical and topological characterization of cohomology bases obtained by \HCs.

    \begin{figure}
        \centering
        \begin{tabular}{ccccccc}
        \includegraphics[width = 0.17\textwidth]{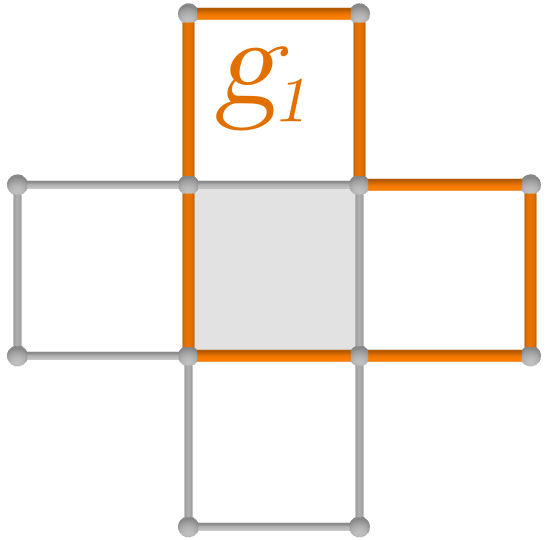} &
        \hspace{.05\textwidth} &
        \includegraphics[width = 0.17\textwidth]{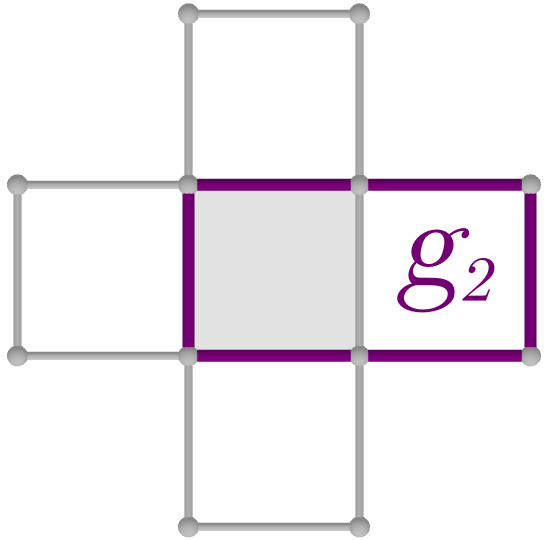} &
        \hspace{.05\textwidth} &
        \includegraphics[width = 0.17\textwidth]{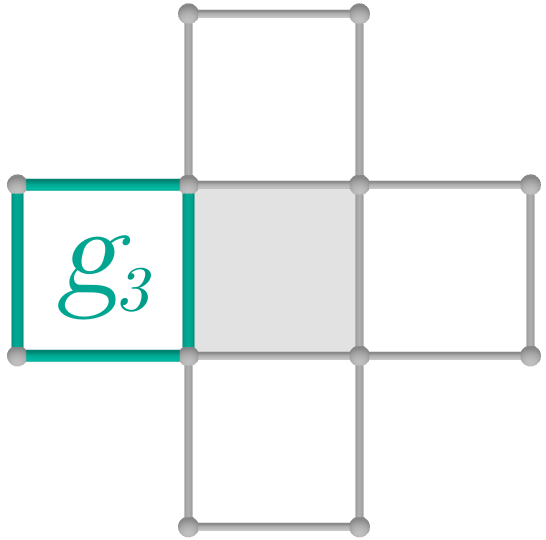} &
        \hspace{.05\textwidth} &
        \includegraphics[width = 0.17\textwidth]{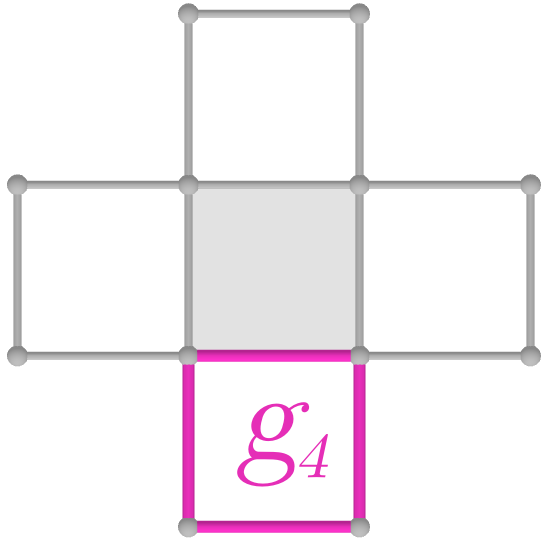}\\
        \vspace{0.02\textwidth}
        \includegraphics[width = 0.17\textwidth]{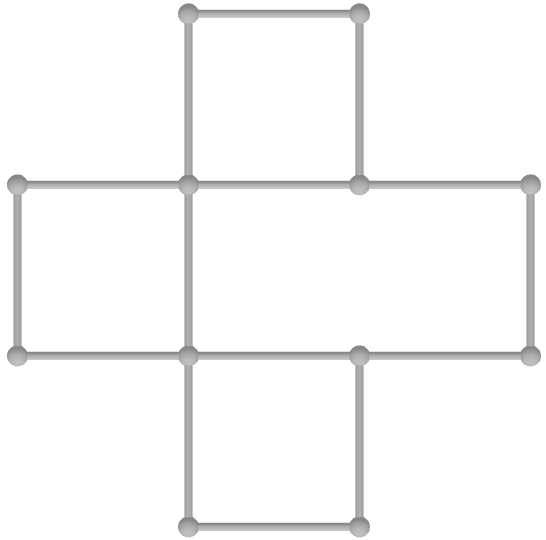} & &
        \includegraphics[width = 0.17\textwidth]{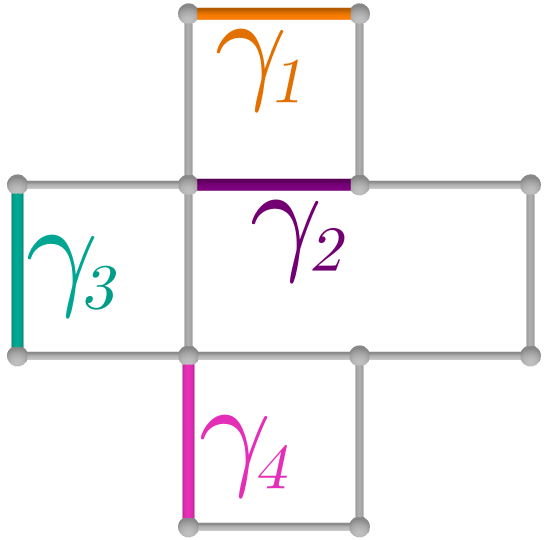} & &
        \includegraphics[width = 0.17\textwidth]{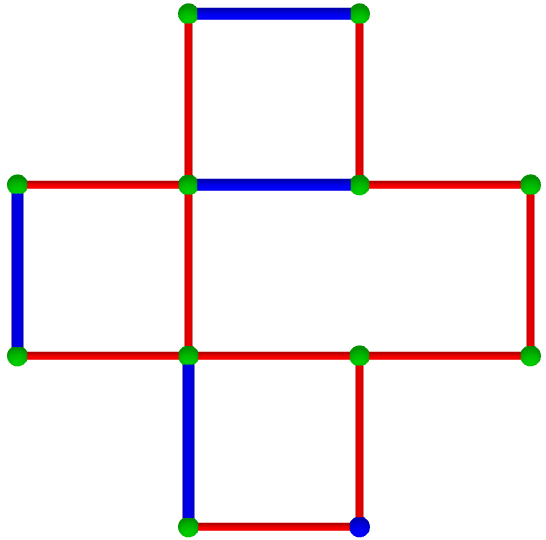}& &
        \includegraphics[width = 0.17\textwidth]{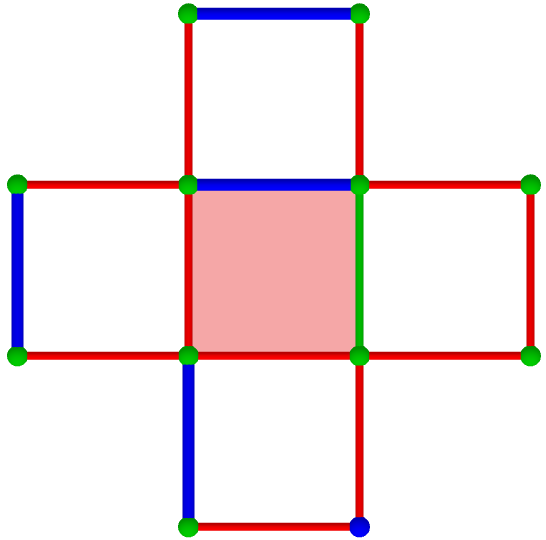} \\
        (a) & & (b) & & (c) & & (d) 
        \end{tabular}
        \caption{Illustration of the constructions involved in the proof of~\Cref{thm_explicit-generators-hdvf}.\\
        The first row represents four generators forming an \explicit\ $1$-homology basis of a complex $(K,\dr)$.
        The second row shows steps of the proof. It starts (a) with $(K(\bigcup \ols{\g_i}),\dr)$ the complex induced by this basis (which has four 1-holes as the basis is \explicit).
        (b) \add{represents a choice of cells} $\gamma_k$ that are respectively in $\ols{\g_k}\bs\bigcup_{i\neq k} \ols{\g_i}$ (which is non-empty as the basis is explicit).
        (c) is the perfect \HC\ $X^\beta$ obtained with~\Cref{lem_explicit-generator-complex}.
        By construction, we get $g^\beta(\gamma_i) = \g_i$.
        Finally, (d) is $X$ the perfect \HC\ for $(K,\dr)$ obtained by completing $X^\beta$ using~\Cref{lem_injective-hdvf-completion}.\\
        The $1$-homology basis induced by $X$ is $(\g_1, \g_2, \g_3, \g_4)$ by construction using~\Cref{lem_generator-preservation}.}
        \label{fig:hc-explicit-example-1}
    \end{figure}
% \cheat{0.01}
\section{Characterization of Homology Bases Obtained by Other Methods}\label{sect:charac-consequences}
    
    %\commentYS{Not enough space to talk about all that?}
    In this section we discuss how the characterization of~\Cref{sect:generators} transfers to bases computed by other methods.

    First, it is important to distinguish computational homology approaches (such as \HCs, but also Smith Normal Form, persistent homology and tri-partitions) and heuristics for computing small homology generators (the question of computing minimal ones is NP-difficult, see for instance~\cite{dey-minimal_cycles_hard_cases}).
    Methods related to this later question usually combine former approaches together with solvers and heuristics to minimize generators, hence homology bases may not be \explicit\ (see for instance~\Cref{fig:non-explicit-basis}, $K_3$).    
    Now, the remainder of this section examines the links between HDVFs and other approaches and deduces properties on the bases they compute.
    \smallskip
    
    \noindent \textit{Discrete Morse theory.}\quad It has been proven in~\cite{aldo-cycles_discrete_morse_theory} that \HCs\ generalize Discrete Morse Theory, in particular a \textit{discrete gradient vector field} is a \HC. Therefore, \Cref{thm_hdvf-generators-explicit} naturally applies, and  Discrete Morse Theory computes only explicit bases.
    \smallskip
    
    \noindent \textit{Smith Normal Form (SNF).}\quad Homology is commonly computed by reducing the boundary matrix of a complex to the Smith-Normal-Form (SNF).
    It has been shown in~\cite{aldo-cycles_discrete_morse_theory} that a \HC\ can emulate this SNF computation.
    Precisely, pairing secondary and primary cells is equivalent to performing a boundary matrix reduction step.
    Some works, such as~\cite{peltier-computation_groups_generators}, deduce homology bases from the SNF of the boundary matrix (or a modified version of it).
    Therefore, \Cref{thm_hdvf-generators-explicit} applies, and such methods (when homology is computed over a field) actually compute \explicit\ bases.
    %\commentYS{Maybe I need to add some justification?}\commentAlex{Je pense que c'est OK comme ça}
    
\subsection{Characterization of homology bases computed by tri-partitions}\label{sect:hc-tri-partition}

%\paragraph{Tri-partitions:}
    In a recent paper~\cite{edelsbrunner-tri_partition}, the authors defined the notion of tri-partition.
    A tri-partition is a combinatorial object that provides homological information using the notion of tree and cotree: % (in~\cite{skraba-minimal_spanning_acycles}, $q$-trees are referred to as acycles):
    \begin{definition}[trees and cotrees]\label{def:co-tree}
        A \emph{$q$-tree} is a set of $q$-cells $A$ that does not generate non-zero cycles, i.e $\Span A \cap \ker\dr = \{0\}$.
        Similarly, a \emph{$q$-cotree} is a set of $q$-cells $A$ that does not generate non-zero cocycles, i.e $\Span A \cap \ker\dr^\ast = \{0\}$.
        A $q$-tree $A$ (resp. $q$-cotree) is maximal if there is no other $q$-tree (resp. $q$-cotree) $A'$ such that $A\subset A'$.
    \end{definition}
    %\commentYS{In~\cite{skraba-minimal_spanning_acycles} they define $q$-trees as ``spanning acycles'', and have already some results such as: the size of a maximal spanning $q$-acycle is the rank of the boundary (basically $|S_q| = dim \im\dr_{q+1}$) or the set of spanning acycles is a matroid.}
    
    \begin{definition}[tri-partition~\cite{edelsbrunner-tri_partition}]\label{def:tri-partition}
        Given a complex $(K,\dr)$, a \emph{tri-partition} of dimension $q$ is a triplet $(A^q,A_q,E_q)$ forming a partition of the $q$-cells of $K$ such that $A^q$ is a maximal $q$-cotree, $A_q$ is a maximal $q$-tree and $|E_q|=\beta_q$.
    \end{definition}
    
    % \begin{theorem}[Theorem 3.1 in~\cite{edelsbrunner-tri_partition}]\label{thm_tri-partition}
    %     Let $K$ be a complex.
    %     Then there exist tri-partitions for every dimension $q$.
    % \end{theorem}
    It has been proved that a complex always admits tri-partitions for every dimension $q$ (see~Theorem 3.1 in~\cite{edelsbrunner-tri_partition}).
    Tri-partitions also provide homology and cohomology bases.
    \begin{definition}[canonical cycle~\cite{edelsbrunner-tri_partition}]\label{def:tri-partition-canonical-cycle}
        Let $(A^q,A_q,E_q)$ be a tri-partition for a complex $\cplx$.
        If $\tau \in A^q \sqcup E_q$, there is a unique $q$-cycle $z_q(\tau)$ in $\tau+\Span A_q$.
        By duality, if $\tau \in A_q \sqcup E_q$, there is a unique $q$-cocycle $z^q(\tau)$ in $\tau+\Span A^q$.
        %$z_q(\tau)$ and $z^q(\tau)$ are called the \emph{canonical} cycle and cocycle of $\tau$ respectively.
        %This defines two linear operators $z_q:K_q\to\ker\dr_q$ and $z^q:K_q\to\ker\dr_{q+1}^\ast$.
    \end{definition}
    It was proved that $(z_q(\epsilon) )_{\epsilon \in E_q}$ is a $q$-homology basis and $(z^q(\epsilon))_{\epsilon \in E_q}$ is a $q$-cohomology basis (see~Theorem 4.3 in~\cite{edelsbrunner-tri_partition}).
    \Cref{prop_hc-equiv-tri-partition} states that tri-partitions are closely related to perfect \HCs.
    Precisely, they can be viewed as ``dimensional layers'' of perfect \HC.
    Conversely, a perfect \HC\ can be viewed as a ``stack'' of tri-partitions.
    \begin{proposition}\label{prop_hc-equiv-tri-partition}
        If $(P,S,C)$ is a perfect \HC\ for $K$, then $(P_q, S_q, C_q)$ is a $q$-tri-partition for every dimension $q$.
        Conversely, if for every $q$, $(A^q,A_q,E_q)$ is a $q$-tri-partition of $K$, then $(\bigcup A^q, \bigcup A_q, \bigcup E_q)$ is a perfect \HC\ for $K$.
    \end{proposition}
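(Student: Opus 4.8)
The plan is to prove the two implications separately, translating in each direction between the data of a perfect \HC\ (a partition of $\ols{K}$ with $\dr_{SP}$ invertible; perfectness, i.e.\ $d=0$, being equivalent to $|C_q|=\beta_q$ for all $q$ because the associated reduction preserves homology) and the data of a $q$-tri-partition (a partition of $\ols{K_q}$ into a maximal $q$-cotree, a maximal $q$-tree, and $\beta_q$ leftover cells). Throughout I use that $\dr$ lowers dimension by exactly one, so $\dr_{SP}$ is block diagonal with respect to dimension with degree-$q$ block $\dr_{S_qP_{q-1}}\colon\Span S_q\to\Span P_{q-1}$; hence $\dr_{SP}$ is invertible iff every such block is.

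For ``perfect \HC\ $\Rightarrow$ tri-partition'', let $X=(P,S,C)$ be a perfect \HC. Restricting the partition of $\ols{K}$ to dimension $q$ gives a partition $(P_q,S_q,C_q)$ of $\ols{K_q}$, and $|C_q|=\beta_q$ since $\Span C_q\cong H_q(K)$ (\Cref{def:perfect-hc}). Injectivity of the block $\dr_{S_qP_{q-1}}$ gives $\Span S_q\cap\ker\dr_q=\{0\}$, so $S_q$ is a $q$-tree; transposing, $\dr^\ast_{P_qS_{q+1}}$ is injective, so $\Span P_q\cap\ker\dr^\ast=\{0\}$ and $P_q$ is a $q$-cotree. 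For maximality I would use \Cref{lem_canonical-cycle}: for any $\sigma\in\ols{K_q}\setminus S_q$ the chain $z_q(\sigma)=\sigma-h_{q-1}\dr_q\sigma$, being a $q$-chain lying in $\sigma+\Span S$, in fact lies in $(\sigma+\Span S_q)\cap\ker\dr$ and is nonzero since its coefficient on $\sigma$ is $1$; hence $S_q\cup\{\sigma\}$ carries a nonzero cycle and is not a $q$-tree, so $S_q$ is maximal. The dual computation with $z^q(\sigma)=\sigma-h_q^\ast\dr_{q+1}^\ast\sigma\in(\sigma+\Span P_q)\cap\ker\dr^\ast$ shows $P_q$ is a maximal $q$-cotree, so $(P_q,S_q,C_q)$ is a $q$-tri-partition.

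For the converse, put $P:=\bigcup_q A^q$, $S:=\bigcup_q A_q$, $C:=\bigcup_q E_q$; this is a partition of $\ols{K}$ since each $(A^q,A_q,E_q)$ partitions $\ols{K_q}$, and $|C_q|=|E_q|=\beta_q$, so it remains to show each block $\dr_{A_qA^{q-1}}\colon\Span A_q\to\Span A^{q-1}$ is invertible (then $(P,S,C)$ is a \HC, and it is perfect since $|C_q|=\beta_q$ for all $q$). Write $r_q:=\mathrm{rank}\,\dr_q$. Maximality of the tree $A_q$ forces $\dr_q\tau\in\dr_q(\Span A_q)$ for every $q$-cell $\tau$ (else $A_q\cup\{\tau\}$ would still be a tree), so $\dr_q(\Span A_q)=\im\dr_q$; together with $\Span A_q\cap\ker\dr_q=\{0\}$ this restricts $\dr_q$ to an isomorphism from $\Span A_q$ onto $\im\dr_q$, whence $|A_q|=r_q$. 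Dually, maximality of the cotree $A^{q-1}$ gives $|A^{q-1}|=\mathrm{rank}\,\dr_q^\ast=r_q$ and $\Span A^{q-1}\cap\ker\dr_q^\ast=\{0\}$, so by a dimension count $K_{q-1}=\Span A^{q-1}\oplus\ker\dr_q^\ast$. Using the non-degenerate cell pairing, for which $(\im\dr_q)^\perp=\ker\dr_q^\ast$ and $\ker\proj{}{A^{q-1}}=(\Span A^{q-1})^\perp$, one gets $\im\dr_q\cap\ker\proj{}{A^{q-1}}=\big(\ker\dr_q^\ast+\Span A^{q-1}\big)^\perp=\{0\}$, so $\proj{}{A^{q-1}}$ restricts to an injection from $\im\dr_q$ into $\Span A^{q-1}$, an isomorphism since both have dimension $r_q$. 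Composing, $\dr_{A_qA^{q-1}}=\proj{}{A^{q-1}}\circ\dr_q\circ\inc{A_q}{}$ is an isomorphism.

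The partition bookkeeping, the equivalence between $|C_q|=\beta_q$ and $d=0$, and the transpose/duality symmetries are routine. I expect the one genuinely delicate step to be the invertibility of $\dr_{SP}$ in the converse: one must combine the tree property of $A_q$ (making $\dr_q$ injective on, and surjective onto $\im\dr_q$ from, $\Span A_q$) with the \emph{maximal} cotree property of $A^{q-1}$ (providing a complement of $\ker\dr_q^\ast=(\im\dr_q)^\perp$ on which $\proj{}{A^{q-1}}$ is injective); the orthogonal-complement computation above seems the cleanest packaging, though a direct dimension comparison works equally well.
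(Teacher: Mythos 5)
Your proof is correct. The first direction (perfect \HC\ $\Rightarrow$ layer-wise tri-partition) follows the paper essentially verbatim: the tree/cotree property from $\Span S\cap\ker\dr=\{0\}$ and its dual, and maximality from the nonzero cycle $\tau-h\dr(\tau)\in(\tau+\Span S_q)\cap\ker\dr$ attached to any $\tau\notin S_q$. The converse is where you genuinely diverge. The paper proves injectivity of each block $\dr_{A_{q+1}A^{q}}$ by an explicit cycle computation: for $x$ in the kernel it writes $\dr(x)=t+e$ with $t\in\Span A_q$ and $e\in\Span E_q$, shows $\dr(x)=z_q(e)$ via the tri-partition's canonical cycle map, invokes Theorem~4.3 of~\cite{edelsbrunner-tri_partition} (the canonical cycles of $E_q$ form a homology basis) to force $e=0$, and obtains squareness only afterwards by running the dual argument to get the reverse cardinality inequality. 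You instead do a pure rank count: maximality of the tree $A_q$ gives $\dr_q(\Span A_q)=\im\dr_q$, hence $|A_q|=\mathrm{rank}\,\dr_q$, dually $|A^{q-1}|=\mathrm{rank}\,\dr_q$, and the orthogonality identity $\im\dr_q\cap\ker\proj{}{A^{q-1}}=\bigl(\ker\dr_q^\ast+\Span A^{q-1}\bigr)^\perp=\{0\}$ makes $\proj{}{A^{q-1}}$ an isomorphism from $\im\dr_q$ onto $\Span A^{q-1}$, so the block is invertible in one shot. This buys two things: you never need the cited result that the canonical cycles of $E_q$ form a homology basis, and your dimension count incidentally shows that the condition $|E_q|=\beta_q$ in the definition of a tri-partition is automatic once the tree and cotree are maximal and disjoint. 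Both arguments are sound; the paper's stays inside the tri-partition machinery, while yours is more elementary linear algebra resting on the non-degeneracy of the standard cell pairing (which the paper does assume when defining $\langle\cdot,\cdot\rangle$).
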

    \begin{proof}
        The proof can be found in the appendix (see~\proofref{prop_hc-equiv-tri-partition}).
    \end{proof}
    
    This proposition underlines the fact that perfect \HCs\ and tri-partitions represent the same object, but the former uses a ``vertical'' point of view whereas the latter uses a ``horizontal'' point of view.
    As a consequence, \Cref{def:tri-partition-canonical-cycle} coincides with the canonical (co)cycle maps defined for \HC. This is a consequence of~\Cref{prop_hc-equiv-tri-partition} and~\Cref{lem_canonical-cycle}.
    Finally, the characterization from~\Cref{sect:generators} and the link between tri-partitions and \HC\ implies the following result:
    \begin{theorem}\label{thm_charac-bases-from-tri-partition}
        A $q$-homology basis is \explicit\ iff it is induced by a $q$-tri-partition.
    \end{theorem}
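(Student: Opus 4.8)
The plan is to combine the already-established equivalence between explicit bases and perfect \HCs\ (\Cref{thm_hdvf-generators-explicit} and \Cref{thm_explicit-generators-hdvf}) with the dictionary between perfect \HCs\ and tri-partitions (\Cref{prop_hc-equiv-tri-partition}). The key point to nail down is that the homology basis a tri-partition $(A^q,A_q,E_q)$ produces via \Cref{def:tri-partition-canonical-cycle}, namely $(z_q(\epsilon))_{\epsilon\in E_q}$, is literally the same family of cycles as the one produced by the corresponding perfect \HC\ via its $g$ map. Once this identification is in place, the theorem follows immediately in both directions.

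First I would prove the forward direction ($\Rightarrow$). Let $(\g_i)_{i\le\beta_q}$ be an explicit $q$-homology basis. By \Cref{thm_explicit-generators-hdvf} there is a perfect \HC\ $X=(P,S,C)$ for $\cplx$ with $(g_q(\gamma))_{\gamma\in C_q}=(\g_i)_{i\le\beta_q}$. By \Cref{prop_hc-equiv-tri-partition}, $(P_q,S_q,C_q)$ is a $q$-tri-partition. It remains to check that the homology basis this tri-partition induces equals $(g_q(\gamma))_{\gamma\in C_q}$. For $\gamma\in C_q=E_q$, \Cref{def:tri-partition-canonical-cycle} picks the unique $q$-cycle in $\gamma+\Span A_q = \gamma+\Span S_q$; by \Cref{lem_canonical-cycle} that unique cycle is exactly $g_q(\gamma)$. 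Hence the tri-partition induces $(\g_i)_{i\le\beta_q}$, as required.

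Next the converse ($\Leftarrow$). Suppose $(\g_i)_{i\le\beta_q}$ is induced by a $q$-tri-partition $(A^q,A_q,E_q)$, i.e.\ $\g_i = z_q(\epsilon_i)$ for an enumeration $E_q=\{\epsilon_1,\dots,\epsilon_{\beta_q}\}$. The subtlety is that \Cref{prop_hc-equiv-tri-partition}'s converse needs a tri-partition in \emph{every} dimension, not just dimension $q$; so I would first invoke the existence theorem (Theorem 3.1 of \cite{edelsbrunner-tri_partition}) to fix arbitrary tri-partitions $(A^{q'},A_{q'},E_{q'})$ in all dimensions $q'\neq q$ and keep our given one in dimension $q$. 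This assembles, via \Cref{prop_hc-equiv-tri-partition}, into a perfect \HC\ $X=(\bigcup A^{q'},\bigcup A_{q'},\bigcup E_{q'})$ whose set of critical $q$-cells is precisely $E_q$. As in the forward direction, \Cref{lem_canonical-cycle} identifies $g_q(\epsilon_i)$ with the unique cycle in $\epsilon_i+\Span A_q$, which is $z_q(\epsilon_i)=\g_i$. So $(\g_i)_{i\le\beta_q}$ is the homology basis of a perfect \HC, and \Cref{thm_hdvf-generators-explicit} tells us it is explicit.

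The main obstacle is really the bookkeeping in the converse: one must be careful that "induced by a $q$-tri-partition" only constrains dimension $q$, and that padding with arbitrary tri-partitions in the other dimensions does not disturb the $q$-dimensional generators — this is safe because the canonical cycle map $z_q$ depends only on $A_q$ (equivalently $S_q$), which is untouched. Beyond that, everything is a direct application of the cited results, and the proof is short.
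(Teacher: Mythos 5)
Your proof is correct and follows exactly the route the paper intends: the theorem is stated as an immediate consequence of \Cref{thm_hdvf-generators-explicit}, \Cref{thm_explicit-generators-hdvf}, \Cref{prop_hc-equiv-tri-partition}, and the identification of the tri-partition canonical cycle with the \HC\ map $g_q$ via \Cref{lem_canonical-cycle}. Your explicit handling of the padding with tri-partitions in the other dimensions (via the existence theorem) is a detail the paper leaves implicit, but it is the right way to fill it in.
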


\subsection{Characterization of persistent homology bases}\label{sect:persistent-homology}
    Persistent homology studies  changes in the homology of a filtration.
    Formally, a filtration is a growing sequence of complexes $K^0 \subseteq K^1  \subseteq \dots \subseteq K^N $.
    The inclusions between complexes induces maps between the homology groups associated with the complexes.
    For each dimension $q$ we have:
    \begin{center}
        $H_q(K^0) \overset{\iota_q^0}{\longrightarrow} H_q(K^1) \overset{\iota_q^1}{\longrightarrow} \dots \overset{\iota_q^{N-1}}{\longrightarrow} H_q(K^N)$
    \end{center}
    %$$H_q(K^0) \overset{\iota_q^0}{\longrightarrow} H_q(K^1) \overset{\iota_q^1}{\longrightarrow} \dots 
    Given $i<j$, the image of $\iota_q^i \circ \dots \circ \iota_q^{j-1}$ represents the $q$-homology classes of $K^i$ that remain in the filtration until $K^j$.
    Intuitively, it represents the $q$-holes of $K^i$ that were not filled between $K^i$ and $K^j$.
    It is then possible to define a \textit{birth} and a \textit{death} index for each homology class that appears in the filtration.
    %Precisely, a $q$-homology class is born at index $i$ if it belongs to $H_q(K^i)\bs \im(\iota_q^{i-1})$.
    %Such a homology class dies at index $j$ if it belongs to $H_q(K^{j-1})\cap\ker(\iota_q^{j-1})$.
    Thus, every homology class in the filtration is associated with a \textit{lifetime} interval $[i,j]$ ($j$ eventually equals to $\infty$ if the homology class never dies).
    These information can be summed up in a \textit{persistence diagram}, which can be defined as a multiset over $\R^2$.
    See~\cite{edelsbrunner-persistent_homology,zomordian-computing_persistent_homology} for a detailed explanation of persistent homology.

    \medskip
    
%\paragraph{Characterization of homology bases:}\ 
    % Citation of Holes and dependences in an ordered complex:
    % However, given a monotonic ordering of the cells, we can use matrix reduction to construct a unique tri-partition. The cells in are exactly the cells in the monotonic ordering that give birth to essential homology classes; those in and are the cells that give birth and death to non-essential homology classes, see (Edelsbrunner and Ölsböck, 2018).
    %, because \HCs\ can emulate the computation of the SNF of the boundary matrix.
    % By tweaking the algorithm that computes a perfect \HC -- mainly to choose primary-secondary pairs the same way positive-negative pairs are chosen in~\cite{edelsbrunner-persistence_simplification} -- it is possible to compute the persistence diagram of a filtration.
    It has been shown in~\cite{aldo-cycles_discrete_morse_theory} that persistent homology can be computed by tweaking the algorithm that computes a perfect \HC. 
    Given a filtration $\emptyset = K^0 \subseteq \dots \subseteq K^N = K$, the method starts with an empty \HC\ and completes it into a perfect \HC\ $X^i = (P^i, S^i, C^i)$ for $(K^i,\dr)$ at the $i$-th iteration.
    Precisely, when a cell $\tau_i$ is added to the filtration at step $i$, it is paired (as a secondary-primary pair) with the youngest possible cell $\tau_j$ already present.
    If no such $\tau_j$ exists, $\tau_i$ is left as a critical cell.
    This algorithm actually reproduces how positive-negative pairs are chosen in~\cite{edelsbrunner-persistence_simplification}.
    Its complexity is the same as for computing a perfect \HC\ -- at most cubic in the number of cells.

    This method not only computes persistence intervals, but also \HCs\ and reductions at each step of the filtration.
    In particular, %it allows to track the homology and cohomology generators along the filtration:
    it computes a $q$-homology basis for each $(K^i,\dr)$, which is called the \textit{$i$-th persistent $q$-homology basis} and whose elements are called \textit{persistent $q$-homology generators}.
    % this allows to track the homology and cohomology generators along the filtration.
    % it computes a $q$-homology basis for each step of the filtration, which is called the \textit{$i$-th persistent $q$-homology basis} and whose elements are called \textit{persistent $q$-homology generators}.
    Formally, if $(f^i,g^i,h^i)$ is the reduction associated to $X^i$, the $i$-th persistent $q$-homology basis is given by $\bigl(g^i_q(\gamma)\bigl)_{\gamma \in C^i_q}$.
    Note that it is a \textit{persistent basis} in the sense of~\cite{kdey-persistent_1_cycles,wu-optimal_cycles_cardiac,obayashi-tightest_representative_persistent}.

    \add{A direct consequence of~\Cref{lem_generator-preservation} is that persistent generators are preserved during the lifetime of the homology class they represent.
    Equivalently, for all $\gamma \in C^i_q$, we have $z_q(\gamma) = g^i_q(\gamma)$, where $z_q$ is the canonical cycle map of the final \HC\ $X^N$.}
    % The following proposition implies that persistent generators are preserved during the lifetime of the homology class they represent.
    % Moreover, they can directly be read on the final \HC\ $X^N$:
    % \begin{proposition}\label{prop_persistent-generator-preservation}
    %     Let $z_q$ be the canonical cycle map of the final \HC\ $X^N$.
    %     Then for all $i$ and for all $\gamma \in C^i_q$, we have $z_q(\gamma) = g^i_q(\gamma)$.
    % \end{proposition}
    % This proposition is a direct consequence of~\Cref{lem_generator-preservation}.
    \add{Note that in~\cite{gonzalez-diaz-AT_models_persistent}, the authors prove similar results and define a similar ``tweaking'' of the algorithm computing \textit{AT-models} (i.e. perfect reductions) to emulate persistent homology.}

    As a result, every point $(i,j)$ in the persistence diagram can be associated to a unique cell $\gamma$ (which was critical between step $i$ and $j$) and to a unique persistent homology generator $z_q(\gamma)$.
    Theorem~\ref{thm_hdvf-generators-explicit} implies that the persistent homology bases of a filtration are \explicit.
    An illustration of persistent homology bases computed by \HC\ is given in~\Cref{fig:persistent-bases}.
    Note that other works such as~\cite{pham-representatives_persistent_double_twist,cufar-fast_persistent_representatives_involuted} deduce persistent homology bases from boundary matrix reductions.
    Again, as boundary matrix reductions can be reproduced by a \HC\ completion, the persistent homology bases computed by these methods are \explicit.

    \begin{figure}
    % \cheat{0.02}
        \centering
        \includegraphics[width = 0.92\textwidth]{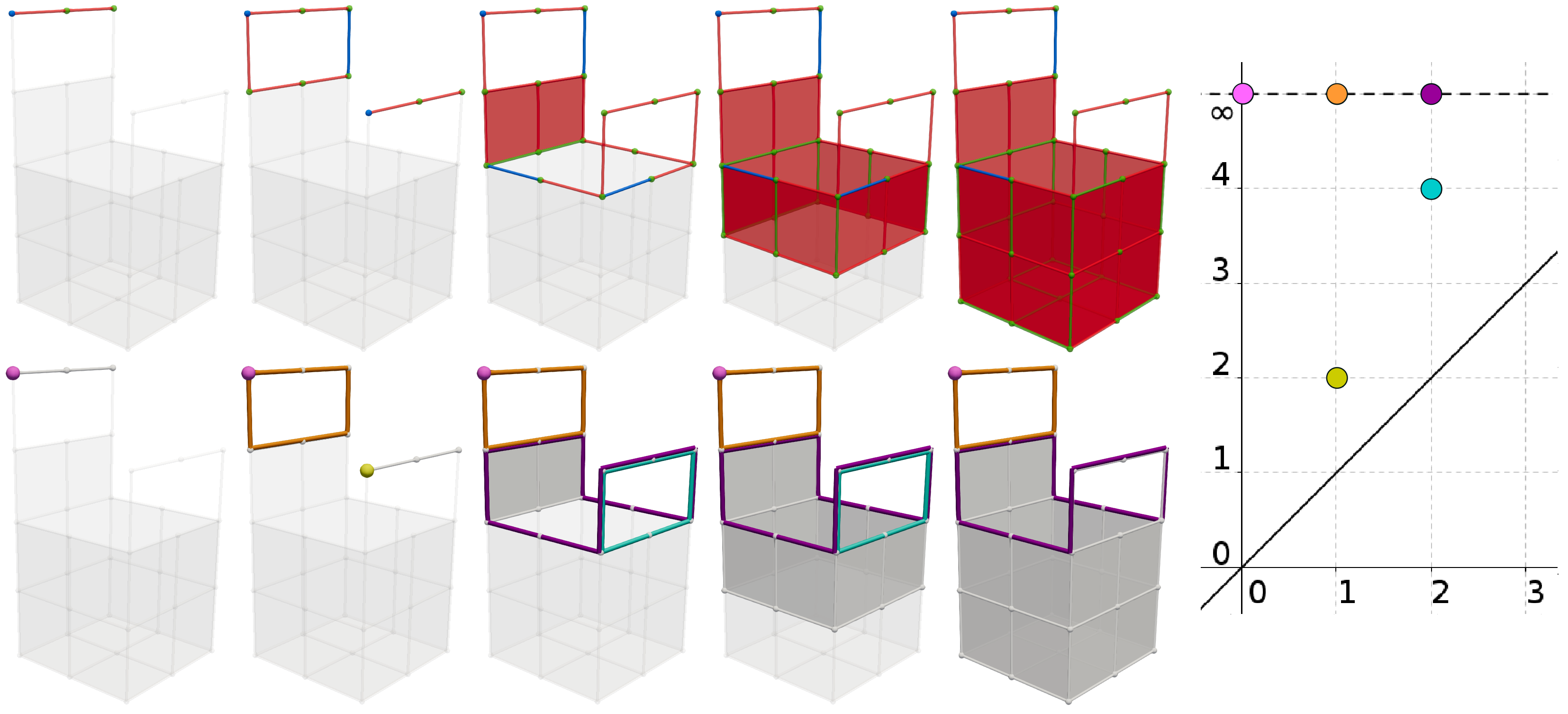}
        \caption{Example of persistent homology computed by \HC.
        First row: the filtration $K_0 \subseteq \dots \subseteq K_4$ and its \HCs\ $X^0, \dots, X^4$.
        Second row: the $i$th-persistent homology bases for $i = 0, \dots, 4$ (bases of dimension 0 and 1 are represented).
        On the right: the persistence diagram of the filtration, each point $(i,j)$ corresponds to a homology class whose lifetime is $[i,j]$, its color matches the color of the corresponding persistent generator.}
        \label{fig:persistent-bases}
    \end{figure}
    % \cheat{0.01}
    
\section{Conclusion and Future Works}
    
    %\REVIEW{These results are of interest, but one might regret the lack of real applications reaching outside of the framework of \HC.}
    This paper introduced a characterization of the homology bases computed by standard computational homology methods.
    We first proved that a class of homology bases called \textit{explicit} is equivalent to being induced by a \HC.
    We then showed how this result applies to other methods, in particular to tri-partitions (which we proved are closely related to perfect \HCs), Discrete Morse Theory and boundary matrix reduction methods (and thus standard persistent homology algorithms).
    Note that a similar result (yet less intuitive) holds for cohomology bases (see~\Cref{sect:explicit-cohomology-bases} in the appendix).

    This work suggests that the \HC\ framework provides a way to study computational homology methods.
    In particular, these results might give some insights on the isomorphisms computed in~\cite{aldo-constructive_alexander_duality} and on the hole closing and opening operations defined in~\cite{edelsbrunner-holes_dependences}.
    In addition, they also raise computational homology questions.
    For instance, can the set of \explicit\ bases over a complex give information about the topology? 
    Can we use \HC\ operations to adapt a perfect \HC\ to a given \explicit\ basis?
    Are minimal \explicit\ bases easier to compute than standard minimal homology bases?
    
    Another open area is the study of \HCs\ when homology is computed over a ring.
    In this case, the existence of a perfect reduction is not guaranteed due to the presence of torsion. However, it might be possible to define a reduction that is somehow ``optimal'' and investigate the \HCs\ related to it.

\subsubsection{\discintname}
On behalf of all authors, the corresponding author declares that there is no conflict of
interest.
% \begin{credits}
% % \subsubsection{\ackname}
% % Blablabla blablabli

% \subsubsection{\discintname}
% On behalf of all authors, the corresponding author declares that there is no conflict of
% interest.
% \end{credits}
%
% ---- Bibliography ----
%
% BibTeX users should specify bibliography style 'splncs04'.
% References will then be sorted and formatted in the correct style.
%
% \bibliographystyle{plain}
\bibliographystyle{splncs04} % -> Package natbib Error: Bibliography not compatible with author-year citations.
\bibliography{references}

\begin{thebibliography}{10}
\providecommand{\url}[1]{\texttt{#1}}
\providecommand{\urlprefix}{URL }
\providecommand{\doi}[1]{https://doi.org/#1}

\bibitem{dey-minimal_cycles_hard_cases}
Dey, T., Hou, T., Mandal, S.: Computing Minimal Persistent Cycles: Polynomial
  and Hard Cases, pp. 2587--2606 (01 2020). \doi{10.1137/1.9781611975994.158}

\bibitem{kdey-persistent_1_cycles}
Dey, T.K., Hou, T., Mandal, S.: Persistent 1-cycles: Definition, computation,
  and its application. In: Marfil, R., Calder{\'o}n, M., D{\'i}az~del R{\'i}o,
  F., Real, P., Bandera, A. (eds.) Computational Topology in Image Context. pp.
  123--136. Springer International Publishing, Cham (2019)

\bibitem{edelsbrunner-persistence_simplification}
Edelsbrunner, Letscher, Zomorodian: Topological persistence and simplification.
  Discrete Comput. Geom.  \textbf{28}(4),  511–533 (Nov 2002).
  \doi{10.1007/s00454-002-2885-2},
  \url{https://doi.org/10.1007/s00454-002-2885-2}

\bibitem{edelsbrunner-persistent_homology}
Edelsbrunner, H., Harer, J.: Persistent homology—a survey. Discrete and
  Computational Geometry - DCG  \textbf{453} (01 2008).
  \doi{10.1090/conm/453/08802}

\bibitem{edelsbrunner-tri_partition}
Edelsbrunner, H., {\"O}lsb{\"o}ck, K.: Tri-partitions and bases of an ordered
  complex. Discrete \& Computational Geometry  \textbf{64},  759 -- 775 (2020),
  \url{https://api.semanticscholar.org/CorpusID:210173079}

\bibitem{edelsbrunner-holes_dependences}
Edelsbrunner, H., Ölsböck, K.: Holes and dependences in an ordered complex.
  Computer Aided Geometric Design  \textbf{73},  1--15 (2019).
  \doi{https://doi.org/10.1016/j.cagd.2019.06.003},
  \url{https://www.sciencedirect.com/science/article/pii/S0167839619300561}

\bibitem{forman-morse_theory_cell_complex}
Forman, R.: {M}orse theory for cell complexes. Advances in Mathematics
  \textbf{134}(1),  90--145 (1998).
  \doi{https://doi.org/10.1006/aima.1997.1650}

\bibitem{gonzalez-diaz-AT_models_persistent}
Gonzalez-Diaz, R., Ion, A., Jiménez, M., Poyatos, R.: Incremental-decremental
  algorithm for computing at-models and persistent homology. In: Computer
  Analysis of Images and Patterns. pp. 286--293. Springer Berlin Heidelberg,
  Berlin, Heidelberg (01 2011). \doi{10.1007/978-3-642-23672-3_35}

\bibitem{aldo-phd_thesis}
Gonzalez-Lorenzo, A.: {Computational Homology Applied to Discrete Objects}.
  Theses, {Aix-Marseille Universite ; Universidad de Sevilla} (Nov 2016),
  \url{https://hal.archives-ouvertes.fr/tel-01477399}

\bibitem{aldo-constructive_alexander_duality}
Gonzalez-Lorenzo, A., Bac, A., Gazull, Y.S.: A constructive approach of
  {A}lexander duality. Journal of Applied and Computational Topology
  \textbf{9}(2) (12 2024). \doi{10.1007/s41468-024-00198-1},
  \url{https://doi.org/10.1007/s41468-024-00198-1}

\bibitem{aldo-cycles_discrete_morse_theory}
Gonzalez-Lorenzo, A., Bac, A., Mari, J.L., Real, P.: Allowing cycles in
  discrete {M}orse theory. Topology and its Applications  \textbf{228},  1--35
  (2017). \doi{https://doi.org/10.1016/j.topol.2017.05.008}

\bibitem{hatcher-algebraic_topology}
Hatcher, A., Press, C.U., of~Mathematics, C.U.D.: Algebraic Topology. Algebraic
  Topology, Cambridge University Press (2002),
  \url{https://books.google.fr/books?id=BjKs86kosqgC}

\bibitem{obayashi-tightest_representative_persistent}
Obayashi, I.: Volume-optimal cycle: Tightest representative cycle of a
  generator in persistent homology. SIAM Journal on Applied Algebra and
  Geometry  \textbf{2} (12 2017). \doi{10.1137/17M1159439}

\bibitem{peltier-computation_groups_generators}
Peltier, S., Alayrangues, S., Fuchs, L., Lachaud, J.O.: Computation of homology
  groups and generators. Computers \& Graphics  \textbf{30}(1),  62--69 (2006).
  \doi{https://doi.org/10.1016/j.cag.2005.10.011},
  \url{https://www.sciencedirect.com/science/article/pii/S0097849305002104}

\bibitem{pham-representatives_persistent_double_twist}
Pham, T., Wagner, H.: Computing representatives of persistent homology
  generators with a double twist (2024), \url{https://arxiv.org/abs/2403.04100}

\bibitem{Sergeraert1986}
Rubio, J., Sergeraert, F.: 1. {H}omologie effective. Cours de l'institut
  Fourier  \textbf{20}(17),  15--38 (1986), \url{http://eudml.org/doc/272555}

\bibitem{cufar-fast_persistent_representatives_involuted}
\v{C}ufar, M., Virk, v.: Fast computation of persistent homology
  representatives with involuted persistent homology. Foundations of Data
  Science  \textbf{5}(4),  466–479 (2023). \doi{10.3934/fods.2023006},
  \url{http://dx.doi.org/10.3934/fods.2023006}

\bibitem{wu-optimal_cycles_cardiac}
Wu, P., Chen, C., Wang, Y., Zhang, S., Yuan, C., Qian, Z., Metaxas, D., Axel,
  L.: Optimal topological cycles and their application in cardiac trabeculae
  restoration. pp. 80--92 (05 2017). \doi{10.1007/978-3-319-59050-9_7}

\bibitem{zomordian-computing_persistent_homology}
Zomorodian, A., Carlsson, G.: Computing persistent homology. Discrete and
  Computational Geometry  \textbf{33},  249--274 (02 2005).
  \doi{10.1007/s00454-004-1146-y}

\end{thebibliography}

%%%%%%%%%%%%%%%%%%%%%%%%%%%%%%%%%%%%%%%%%%%%%%%%%%%%%%%%%%%%%%%%%%%%
\section*{Appendix}
% \red{\textit{As discussed with DGMM organisers, the appendix is provided here but will be put in an extended version on arXiv if the submission is accepted.}}
%\\input\{resources/proofs/(.*?)\}
% See the \proofref{lem_epsilon-close-charac} in Appendix.
\subsection{Example and Proofs of~\Cref{sect:preliminaries}}

\begin{example}[reduction of a \HC]\label{example:reduction}
    Let us state~\Cref{prop_hvdf-reduction} again: a \HC\ $(\green{P},\red{S},\blue{C})$ induces a reduction $(f,g,h)$ from $(K,\dr)$ to $(Span \blue{C},d)$ with:
    \begin{align*}
      g :=
      \begin{blockarray}{cc}
      \blue{C} \\
      \begin{block}{(c)c}
      0 & \ \green{P} \\
      G & \ \red{S} \\
      id & \ \blue{C} \\
      \end{block}
      \end{blockarray}
      &&f :=
      \begin{blockarray}{cccc}
      \green{P} & \red{S} & \blue{C} \\
      \begin{block}{(ccc)c}
      F & 0 & id & \ \blue{C} \\
      \end{block}
      \end{blockarray}
      &&h :=
      \begin{blockarray}{cccc}
      \green{P} & \red{S} & \blue{C} \\
      \begin{block}{(ccc)c}
      0 & 0 & 0 & \ \green{P} \\
      H & 0 & 0 & \ \red{S} \\
      0 & 0 & 0 & \ \blue{C} \\
      \end{block}
      \end{blockarray} %\\
      &&d :=
      \begin{blockarray}{cc}
      \blue{C} \\
      \begin{block}{(c)c}
      D & \ \blue{C} \\
      \end{block}
      \end{blockarray}
    \end{align*}
where\ $H=(\dr_{\red{S}\green{P}})\mun$,\ \ $F = -\dr_{\red{S}\blue{C}} \cdot H$,\ \ $G = -H \cdot \dr_{\blue{C}\green{P}}$\ and\ $D = \dr_{\blue{C}\blue{C}} - \dr_{\red{S}\blue{C}} \cdot H \cdot \dr_{\blue{C}\green{P}}$

\begin{figure}[htb!]
    \centering
    \includegraphics[width = 0.33\textwidth]{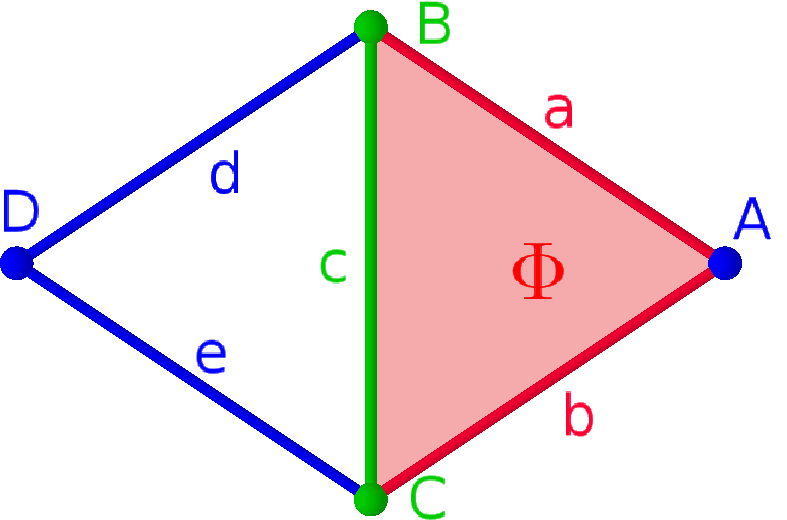}
    \hspace{0.2\textwidth}
    \includegraphics[width = 0.22\textwidth]{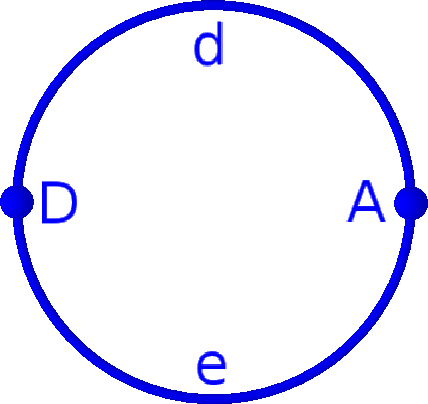}
    \caption{An example of a \HC\ $(\green{P},\red{S},\blue{C})$ on a complex $(K,\dr)$ (on the left) and a representation of the reduced complex $(\Span\blue{C}, d)$ induced by the reduction of this \HC (on the right).}
    \label{fig:reduction-example}
\end{figure}
In the example of~\Cref{fig:reduction-example},~\Cref{prop_hvdf-reduction} implies the following matrices:
\begin{align*}
  \dr_1 =
  \begin{blockarray}{cccccc}
    \secondary{a} & \secondary{b} & \primary{c}  & \critical{d}  & \critical{e} \\
    \begin{block}{(ccccc)c}
    1 & 1 & 0 & 0 & 0 & \ \critical{A} \\
    1 & 0 & 1 & 1 & 0 & \ \primary{B} \\
    0 & 1 & 1 & 0 & 1 & \ \primary{C} \\
    0 & 0 & 0 & 1 & 1 & \ \critical{D} \\
    \end{block}
  \end{blockarray}
  &&\dr_2 =
  \begin{blockarray}{cc}
    \red{\Phi} \\
    \begin{block}{(c)c}
    1 & \ \secondary{a} \\
    1 & \ \secondary{b} \\
    1 & \ \primary{c} \\
    0 & \ \critical{d} \\
    0 & \ \critical{e} \\
    \end{block}
  \end{blockarray}
  && H_1 =
  \begin{blockarray}{cc}
    \primary{c} \\
    \begin{block}{(c)c}
    1 & \ \red{\Phi} \\
    \end{block}
  \end{blockarray}
  &&H_0 =
  \begin{blockarray}{ccc}
    \primary{B}  & \primary{C} \\
    \begin{block}{(cc)c}
    1 & 0 & \ \secondary{a} \\
    0 & 1 & \ \secondary{b} \\
    \end{block}
  \end{blockarray}
\end{align*}
\vspace{-0.1\textwidth}
\begin{align*}
  d_1 &= D_1 = \dr_{\critical{C_1}\critical{C_0}} - \dr_{\secondary{S_1}\critical{C_0}} \cdot H_0 \cdot \dr_{\critical{C_1}\primary{P_0}} 
  && d_2 = 0 
  && d_0 = \begin{blockarray}{cc}
    \critical{A} & \critical{D}\\
    \begin{block}{(cc)}
     0 & 0 \\
    \end{block}
  \end{blockarray}\\
  &=\begin{pmatrix}
    0 & 0  \\
    1 & 1 
  \end{pmatrix}
    -
    \begin{pmatrix}
    1 & 1  \\
    0 & 0 
  \end{pmatrix}
    \cdot
    \begin{pmatrix}
    1 & 0 \\
    0 & 1
  \end{pmatrix}
    \cdot
    \begin{pmatrix}
    1 & 0 \\
    0 & 1
    \end{pmatrix} \\
  &=\ 
  \begin{blockarray}{ccc}
    \critical{d} & \critical{e} \\
    \begin{block}{(cc)c}
    1 & 1 & \ \critical{A} \\
    1 & 1 & \ \critical{D} \\
    \end{block}
  \end{blockarray}
\end{align*}

As reductions preserve homology, the homology of $(K,\dr)$ is the same as the homology of $(\Span\blue{C}, d)$.
In particular:
$$H_1(K) \approx \frac{\ker{d_1}}{\im{d_2}} = \frac{\Span{\{\critical{d}+\critical{e}\}}}{\{0\}} = \Span{\{\critical{d}+\critical{e}\}} \approx \Z/2\Z$$
    
\end{example}

\begin{delayedproof}{lem_canonical-cycle}
    \add{We only prove the first equation as the second equation is the same result stated on the dual complex (see~\Cref{def:dual-complex,prop_hvdf-dual-reduction} for more details on this duality).}

First, $z_q(x) \in x+\Span S$ because $z_q(x) = x - h\dr(x)$ and $\im h \subseteq \Span S$.
Moreover, $z_q(x) \in \ker\dr$ because $z_q(x) = gf(x)+\dr h(x)$ which is a sum of cycles because the \HC\ is perfect.

Second, suppose $x+s\in (x+\Span S)\cap\ker\dr$.
This implies that $y := z_q(x) - (x+s)$ is a cycle in $\Span S$.
With the reduction properties we have
\add{$y = gf(y) + \dr h(y) + h \dr(y) = 0$}
(because $h\circ\inc{S}{} = 0$,\ $\dr(y)=0$ and $f\circ\inc{ S}{} = 0$, see~\Cref{def:reduction}).
Hence, $x+s = z_q(x)$.
\end{delayedproof}

% \begin{delayedproof}{prop_structure-co-cycles}
%     \input{resources/proofs/prop_structure-co-cycles}
% \end{delayedproof}
% \begin{delayedproof}{prop_structure-co-boundaries}
%     \input{resources/proofs/prop_structure-co-boundaries}
% \end{delayedproof}

\subsection{Proofs of~\Cref{sect:generators}}

\begin{delayedproof}{prop_explicit-bases-equivalence}
    % !TEX root = ../../main.tex
For simplicity we write $[\cdot]$ instead of $[\cdot]^K_q$ and $[\cdot]^J$ instead of $[\cdot]^{K^J}_q$.
Firstly, we state some results about the $q$-homology basis $(\g_i)_{i\leq\beta}$:
\begin{itemize}
    \item $([\g_i])_{i\leq\beta}$ is a basis of $H_q(K)$ (by~\Cref{def:generators-homology-bases}).
    \item $\iota_q^J([x]^J) = [x]$ for every $x$ in $K^J$ (by functoriality of the homology functor).
    % \item For every $k$, $(\g_i)_{i\in J}$ is a free family.
    %     Otherwise we would have $\g_j \in \Span((\g_i)_{i\neq j})$ which would imply that $([\g_i])_{i\leq\beta}$ is not a basis of $H_q(K)$.
    \item For every $J$, $([\g_i]^J)_{i\leq J}$ is linearly independent.
        Indeed, $\sum \epsilon_i [\g_i]^J = 0$ implies $\sum \epsilon_i [\g_i] = 0$, so every $\epsilon_i$ is 0 because $([\g_i])_{i\leq\beta}$ is a basis.
\end{itemize}
\medskip

\noindent\textbf{1. $\implies$ 2.}:\quad
    Let $x \in \ker\dr \cap K^J_q$, let us show that it is in $\Span \left( (\g_i)_{i\in J} \right)$
    
    \smallskip

    $\iota_q^J$ has rank $|J|$ and $([\g_i])_{i\in J} = (\iota_q^J([\g_i]^J))_{i\leq J}$ is a linearly independent set of cardinal $|J|$ in $\im\iota_q^J$, therefore $([\g_i])_{i\leq J}$ is a basis of $\im\iota_q^J$.
    Hence, \add{there exist $(\epsilon_i)_{i\in J}$ coefficients} such that:
    $$\iota_q([x]^J) = \sum\nolimits_{i\in J} \epsilon_i [\g_i] = \iota_q \left( \left[\sum\nolimits_{i\in J} \epsilon_i \g_i\right]^J \right) $$
    By injectivity of $\iota_q$, we have $[x]^J = \left[\sum_{i\in J} \epsilon_i \g_i\right]^J$, therefore there exists a $(q+1)$-chain $y\in K^J$ such that $x = \sum_{i\in J} \epsilon_i \g_i + \dr y$.
    However, the maximal dimension of \add{cells in} $K^J$ is $q$, so $y=0$.
    As a result, $x = \sum_{i\in J} \epsilon_i \g_i \in \Span \left( (\g_i)_{i\in J} \right)$.

\medskip
\noindent\textbf{2. $\implies$ 3.}:\quad
    Firstly, by contradiction suppose that $\ols{\g_k}\bs\bigcup\nolimits_{i \neq k} \ols{\g_i} = \emptyset$.
    
    \smallskip

    Let $J := \{1,\dots, \beta\}\bs\{k\}$.
    We then have $\ols{\g_k} \subseteq \bigcup\nolimits_{i \in J} \ols{\g_i} = \ols{K^J}$, therefore $\g_k \in K^J$.
    In addition, $\g_k$ is a $q$-cycle (because it is a $q$-generator).
    By assumption, we obtain that $\g_k \in \Span\left( (\g_i)_{i\in J} \right)$ which contradicts the fact that $([\g_i]^\beta)_{i\leq\beta}$ is linearly independent.
    As a consequence $\ols{\g_k}\bs\bigcup\nolimits_{i \neq k} \ols{\g_i} \neq \emptyset$.\\
    
    Then, $([\g_i]^\beta)_{i\leq \beta}$ is linearly independent, let us show it is a basis for $H_q(K^\beta)$.\\
    Let $[x]^\beta \in H_q(K^\beta)$.
    We have that $x$ is a $q$-cycle in $K^\beta$.
    By assumption we get that $x \in \Span((\g_i)_{i\leq \beta})$, therefore $[x]^\beta \in \Span(([\g_i]^\beta)_{i\leq \beta})$.
    Hence, $([\g_i]^\beta)_{i\leq \beta}$ generates $H_q(K^\beta)$ and is linearly independent so it is a basis for $H_q(K^\beta)$.
    As a result $dim(H_q(K^\beta)) = \beta$.

\medskip
\noindent\textbf{3. $\implies$ 1.}:\quad
    Given $J$, $([\g_i]^J)_{i\in J}$ is linearly independent, let us show that it is a basis for $H_q(K^J)$.
    
    \smallskip

    \add{Consider a $q$-cycle $x$ in $K^J$ and its associated homology class $[x]^J \in H_q(K^J)$}.
    Let $\iota_q^{J \to \beta} : H_q(K^J) \to H_q(K^\beta)$ be the homology morphism induced by the inclusion $(K^J,\dr)\subseteq (K^\beta,\dr)$.
    We have $\iota_q^{J \to \beta}([x]^J)=[x]^\beta$.
    
    \smallskip

    $H_q(K^\beta)$ has dimension $\beta$ by assumption. $([\g_i]^\beta)_{i\leq\beta}$ is a linearly independent set of $H_q(K^\beta)$ with cardinal $\beta$ so it is a basis of it.

    Hence, \add{there exist $(\epsilon_i)_{i\in J}$ coefficients such that} $[x]^\beta = \sum_{i\leq \beta} \epsilon_i [\g_i]^\beta$.
    Thus, there exists a $(q+1)$-chain $y\in K^\beta$ such that $x = \sum_{i\leq \beta} \epsilon_i \g_i + \dr y$.
    As $K^\beta_{q+1} = \{0\}$ we obtain $y=0$ and $x = \sum_{i\leq \beta} \epsilon_i \g_i$.

    \smallskip

    By contradiction: suppose that there is $j \notin J$ such that $\epsilon_j \neq 0$.
    Then, as $\ols{\g_j}\bs\bigcup_{i \neq j} \ols{\g_i} \neq \emptyset$, let $\gamma$ be a cell in $\ols{\g_j}$ that is not in the other $\ols{\g_i}$.
    We have:
    \begin{align*}
        \langle x, \gamma \rangle &= \sum\nolimits_{i\leq\beta} \epsilon_i \langle \g_i, \gamma \rangle
        = \epsilon_j \langle \g_j, \gamma \rangle \neq 0 \qquad \tx{because $\gamma \in \ols{\g_j}$ and $\epsilon_j \neq 0$}
    \end{align*}
    Hence, $\gamma \in \ols{x}$.
    However, $x \in K^J$ so $\ols{x} \subseteq \ols{K^J} = \bigcup_{i\in J} \ols{\g_i}$ but $\gamma$ is not in any $\ols{\g_i}$ with $i\in J$ by definition.
    As a consequence, $\forall j \notin J, \epsilon_j = 0$.
    
    \smallskip

    This implies that $x = \sum_{i\in J} \epsilon_i \g_i$ and $[x]^J \in \Span(([\g_i]^J)_{i\in J})$.\\
    As a result $([\g_i]^J)_{i\in J}$ is a basis for $H_q(K^J)$ with cardinal $|J|$.
    Therefore, as $([\g_i])_{i\in J} = (\iota_q^J([\g_i]^J))_{i\in J}$ is a linearly independent family of $\im \iota_q^J$ we obtain that $\iota_q^J$ is an injective map with rank $|J|$.

\end{delayedproof}

% \begin{delayedproof}{prop_structure-generators}
%     \input{resources/proofs/prop_structure-generators}
% \end{delayedproof}

\begin{delayedproof}{thm_hdvf-generators-explicit}
    In this proof, we denote $P^\bot := S \sqcup C$.
     % !TEX root = ../../main.tex
Denote $C_q = \{\gamma_1, \dots, \gamma_\beta\}$.
For this proof, we use the second characterization of an \explicit\ homology basis in~\Cref{prop_explicit-bases-equivalence}.
Let $J\subseteq \{1,\dots, \beta\}$ and $K^J := K\left(\bigcup_{i \in J} \ols{g(\gamma_i)}\right)$. Let $x\in\ker\dr \cap K^J_q$, we will show that $x\in\Span{(g(\gamma_i))_{i\in J}}$.

As $\im g \subseteq \Span P^\bot$ (see~\Cref{prop_hvdf-reduction}), we have that $\ols{x} \subseteq \ols{K^J_q} = \bigcup\nolimits_{i \in J} \ols{g(\gamma_i)} \subseteq P^\bot$.
Hence $x\in\Span P^\bot$ and there exists $s\in\Span S$ and $c\in\Span C$ such that $x = c + s$ and  $\ols{x} = \ols{c}\sqcup\ols{s}$.

By~\Cref{lem_canonical-cycle} we have that for all $i$, $(g(\gamma_i)- \gamma_i) \in \Span S$ so $\ols{g(\gamma_i)}\cap C = \{\gamma_i\}$.
This implies:
$$\ols{K^J_q} \cap C = \bigcup\nolimits_{i \in J} \ols{g(\gamma_i)} \cap C = \bigcup\nolimits_{i \in J} \{\gamma_i\}$$

As $x\in K^J_q$, we obtain $\ols{x} \cap C = \ols{c} \subseteq \{\gamma_i,\ i\in J\}$.
This implies that there exist $(\epsilon_i)_{i\in J}$ elements of the field $\mathcal{F}$ such that $c = \sum_{i \in J} \epsilon_i \gamma_i$.
Thus with reduction properties (see~\Cref{def:reduction}):
\begin{align*}
    x &= gf(x) + \dr h (x) + h \dr(x)\\
     &= gf(c+s) &&\tx{as $h \circ \inc{ P^\bot}{} = 0$ and $x\in\Span P^\bot$ and $x\in\ker\dr$}\\
     %&= g(f(c)+f(s)) &&\tx{because $x\in\ker\dr$}\\
     &= g(c) &&\tx{as $f \circ \inc{ C}{} = \id_{ C}$ and $f\circ \inc{ S}{} = 0$}\\
    &= \sum\nolimits_{i\in J} \epsilon_i g(\gamma_i) &&\tx{As a result $x\in\Span{(g(\gamma_i))_{i\in J}}$.}
\end{align*}
\end{delayedproof}

\begin{delayedproof}{lem_injective-hdvf-completion}
    \add{
    % !TEX root = ../../main.tex
Using the completion~\Cref{coro:hdvf-completion}, there exists $X''=(P'',S'',C'')$ a perfect \HC\ for $\cplxpdr$ with reduction $(f'',g'',h'')$ such that $P\subseteq P''$ and $S\subseteq S''$, but a priori $C^*$ is not included in $C''$.

The idea of the proof is to first show the injectivity of $\proj{}{C''\bs C^*}\circ f''\circ \inc{P''\cap C^*}{}$ and then use it to find a set $\Gamma \subseteq C''\bs C^\ast$ such that $X' := \M_{P'' \cap C^\ast}^\Gamma X''$ is a valid operation.

\begin{enumerate}
% 1. f'' i_C* injective
\item First, we show that $f''\circ \inc{C^*}{}$ is injective:\\
    Suppose that there is $c\in \Span C^*$ such that $f''(c) = 0$.
    By~\Cref{lem_canonical-cycle}, we write $g(c) = c+s$ with $s\in\Span S \subseteq \Span S''$.
    With~\Cref{def:reduction}-5. we have:
    \begin{align*}
        g(c) &= g'' f'' g(c) + \dr h'' g(c) + h''\dr g(c)\\
        &= g'' f'' (c+s) + \dr h'' g(c) && \tx{because $\dr g = 0$}\\
        &= g''f''(c) + g''f''(s) + \dr h'' g(c)\\
        &= g''f''(c) + \dr h'' g(c) && \tx{as $f''\circ\inc{S''}{} = 0$ and $s\in\Span S''$}\\
        &= \dr h'' g(c) \in \dr(K') &&\tx{because $f''(c) = 0$}
    \end{align*}
    Hence, $g(c)$ is a boundary in $K'$ so $[g(c)]^{K'} = 0$.
    By assumption, $( [g_q(\gamma)]^{K'} )_{\gamma \in C^\ast}$ is linearly independent, and $c\in\Span C^\ast$, therefore we obtain that $c = 0$.

\smallskip    
% 2. j_C\C'' f'' i_C''C injective
\item Second, we show the injectivity of $\proj{}{C''\bs C^*}\circ f''\circ \inc{P''\cap C^*}{}$:\\
    Suppose that there is $c\in \Span (P''\cap C^*)$ such that $\proj{}{C''\bs C^*} f''(c) = 0$.
    As $\im f'' \subseteq \Span C''$, this implies that $f''(c)\in\Span(C''\cap C^*)$.
    
    As $f'' \circ \inc{C''}{} = \id_{C''}$ (see~\Cref{prop_hvdf-reduction}), we obtain that $f''(f''(c)) = f''(c)$.
    As a consequence, we have $f''(c - f''(c)) = 0$ which implies $c = f''(c)$ by injectivity of $f''\circ \inc{C^*}{}$ (1.).
    However, we have $c \in \Span (P''\cap C^*)$ and $f''(c)\in\Span(C''\cap C^*)$, so $c = 0$ (because $P''\cap C'' = \emptyset$).

\smallskip
% 3. C* = C*P U C*C''
\item Third, we show that $C^\ast = (P'' \cap C^\ast) \sqcup (C'' \cap C^\ast)$:\\
    By contradiction suppose $\gamma \in S''\cap C^\ast$.
    \Cref{lem_canonical-cycle} implies that $g(\gamma) \in \gamma+\Span S \subseteq \gamma + \Span S''$.
    Moreover, $g(\gamma)$ is a cycle (because $X$ is perfect) so we have $g(\gamma) \in \ker\dr\cap(\gamma +\Span S'')$.
    Therefore, $g(\gamma) = z''_q(\gamma)$ by~\Cref{lem_canonical-cycle}.
    In addition, we have $z''_q \circ \inc{S''}{} = 0$ (by~\Cref{lem_canonical-cycle} again) and $\gamma\in S''$ so $g(\gamma) = 0$, which is not possible by injectivity of $g$.
    As a result, $S''\cap C^\ast = \emptyset$ and we obtain $C^\ast = (C^\ast \cap P'') \sqcup (C^\ast \cap C'')$ because $(P'',S'',C'')$ is a partition.
\end{enumerate}
% \smallskip
% 4. conclusion defining gamma and M
Finally, we can define the suited \HC\ $X'$:\\
    As $\proj{}{C''\bs C^*}\circ f''\circ \inc{P''\cap C^*}{}$ is injective (2.), it admits a invertible sub-matrix of maximal rank.
    Hence, there exists a subset $\Gamma \subseteq C''\bs C^\ast$ such that $\proj{C''\bs C^\ast}{\Gamma} \circ \proj{}{C''\bs C^*}f''\inc{P''\cap C^*}{}$ is an invertible square matrix.
    Therefore, $\proj{C''\bs C^\ast}{\Gamma} f'' \inc{P''\cap C^*}{C^\ast}$ is invertible, so by~\Cref{def:generalized-hdvf-operators} $\M_{P''\cap C^*}^\Gamma$ is a valid operation for $X''$.
    
    Let $X' = (P',S',C') := \M_{P''\cap C^*}^\Gamma X''$, we have:
    $$ P' = (P''\sqcup \Gamma) \bs ({P''\cap C^*}) \qquad S' = S'' \qquad C' = \left(C''\sqcup ({P''\cap C^*})\right) \bs \Gamma  $$
    \begin{itemize}
        \item $P \subseteq P'$ because $P\subseteq P''$ and $P \cap ({P''\cap C^*}) = \emptyset$.
        \item $S \subseteq S'$ because $S \subseteq S''$.
        \item $C^* \subseteq C'$ because $ C^* = ({C''\cap C^*})\sqcup ({P''\cap C^*})$ (3.) and $\Gamma \cap C^\ast = \emptyset$.% (as $\Gamma \subseteq C''\bs C^*$)
    \end{itemize}
    }
\end{delayedproof}

\begin{delayedproof}{lem_explicit-generator-complex}
    % !TEX root = ../../main.tex
For simplicity we will write $K^k$ instead of $K^{\{1\dots k\}}$ and $[\cdot]^k$ instead of $[\cdot]^{K^k}_q$.
We will also write $\iota^k$ instead of $\iota^{K^k}_q$ and omit the $q$ subscripts for reduction maps.

\smallskip

Let $(\g_i)_{i\leq \beta}$ an explicit $q$-homology basis for $\cplx$.
% First let us define the $q$-homology skeleton complex $(K^k,\dr)$:
% $$K^k := K \left( \bigcup_{i\leq k} \ols{\g_i} \right)$$ % already define in the paper
Let $k\leq \beta$.
Because $\ols{\g_k}\bs\bigcup_{i \neq k} \ols{\g_i} \neq \emptyset$, let us fix an element $\gamma_k$ in $\ols{\g_k}$ that is not in the other $\ols{\g_i}$s.

\smallskip

We will build by induction for $k = 0, \dots, \beta$ a perfect \HC\ $X^k = (P^k,S^k,C^k)$ for the complex $(K^k,\dr)$ (with reduction $(f^k,g^k,h^k)$) such that $C^k_q = \{ \gamma_i,\ i \leq k\}$ and $\forall i \leq k,\ g^k(\gamma_i) = \g_i$.

\medskip
\noindent\textbf{Base case}: $k = 0$.\\
    $K^0 = \{0\}$, $X^0 = (\emptyset,\emptyset,\emptyset)$ with reduction $(f^0,g^0,h^0)$, such that $f^0$, $g^0$ and $h^0$ are the zero function $\{0\}\to\{0\}$.

\medskip
\noindent\textbf{Induction step}: $k+1$.\\
Suppose that there exists $X^k = (P^k,S^k,C^k)$ a perfect \HC\ for the complex $(K^k,\dr)$ (with reduction $(f^k,g^k,h^k)$) such that $C^k_q = \{ \gamma_i,\ i \leq k\}$ and $\forall i \leq k,\ g^k(\gamma_i) = \g_i$.

We have $\ols{K^{k+1}} = \ols{K^k} \cup \ols{\g_{k+1}}$.
Denote $\iota^k$, $\iota^{k+1}$ and $\iota^{k\to k+1}$ the following $q$-homology maps induced the inclusions $(K^k,\dr) \subseteq (K,\dr) $,\quad $(K^{k+1},\dr) \subseteq (K,\dr) $ and $(K^k,\dr) \subseteq (K^{k+1},\dr)$:
$$\iota^k:H_q(K^k)\to H_q(K) \quad \iota^{k+1}:H_q(K^{k+1})\to H_q(K) \quad \iota^{k\to k+1}:H_q(K^k) \to H_q(K^{k+1})$$
By functoriality of the homology functor we have $ \iota^k = \iota^{k+1} \circ \iota^{k\to k+1} $.

\smallskip

$(\g_i)_{i\leq \beta}$ is \explicit\ so $\iota^k$ is injective by~\Cref{prop_explicit-bases-equivalence}(1.), therefore $\iota^{k\to k+1}$ is also injective.
In addition, $([g^k(\gamma)]^{k})_{\gamma \in C^{k}_q}$ is linearly independent (because $X^k$ is perfect).
Hence, $([g^k(\gamma)]^{k+1})_{\gamma \in C^{k}_q} = (\iota^{k\to k+1}[g^k(\gamma)]^{k})_{\gamma \in C^{k}_q}$ is linearly independent.

Using~\Cref{lem_injective-hdvf-completion} with $C^\ast := C^k_q$, it is then possible to complete $X^k$ into a perfect \HC\ $X = (P,S,C)$
for $(K^{k+1}, \dr)$ such that $P^k\subseteq P$ and $S^k \subseteq S$ and $C^k_q \subseteq C_q$. We will now slightly modify $X$ to have the desired properties.

    \smallskip

$(\g_i)_{i\leq \beta}$ is \explicit\ so $\iota^{k+1}$ is injective with rank $k+1$ by~\Cref{prop_explicit-bases-equivalence}, therefore $|C_q| = k+1$.
As $|C^k_q| = k$ there exists a cell $\tau \in C_q$ such that $C_q = C^k_q \sqcup \{\tau\}$.
As $\tau$ is not in $\ols{K^k}$, we have that $\tau \in \ols{\g_{k+1}}$.

\add{The maximal dimension of cells in the complex} $(K^{k+1},\dr)$ is $q$ by construction, therefore every $q$-cell of it is either a secondary or a critical cell (otherwise $\dr_{SP}$ would not be invertible).
Hence, we have that $\ols{\g_{k+1}} \bs \{\tau\} \subseteq S$ because $\ols{\g_{k+1}}$ cannot contain critical cells from $C^k_q = \{ \gamma_i,\ i \leq k\}$.
This implies that $\g_{k+1} \in \tau + \Span S$ so by~\Cref{lem_canonical-cycle} we have $g(\tau) = \g_{k+1}$.

    \smallskip

If $\tau = \gamma_{k+1}$, we can just define $X^{k+1} := X$. If that is not the case we use a $\W$ operation:
as $\gamma_{k+1} \in \ols{\g_{k+1}} = \ols{g(\tau)}$, $\W_{\{\gamma_{k+1}\}}^{\{\tau\}}$ is a valid operation for $X$ \add{(see~\Cref{def:generalized-hdvf-operators})}.
As a result we define $X^{k+1} := \W_{\{\gamma_{k+1}\}}^{\{\tau\}} X = (P^{k+1},S^{k+1},C^{k+1})$, which is a perfect \HC\ for $K^{k+1}$ \add{because a valid $\W$ operation transforms a perfect \HC\ into another perfect \HC (see~\Cref{def:generalized-hdvf-operators}).}
We denote $(f^{k+1},g^{k+1},h^{k+1})$ its reduction.

    \smallskip
    
We now verify that $X^{k+1}$ satisfies the desired properties.\\
Firstly, $C^{k+1}_q = \{ \gamma_i,\ i \leq k+1\}$.
Secondly, as $P^{k} \subseteq P^{k+1}$, $S^{k} \subseteq S^{k+1}$ and $C^k_q \subseteq C \cap C^{k+1}$,~\Cref{lem_generator-preservation} implies that $\forall i \leq k,\ g^{k+1}(\gamma_i) = g^k(\gamma_i) = \g_i$.
%In addition, the $\W_{\gamma_{k+1}}^\tau$ operation preserves the homology generator associated to $\tau$ (see~\Cref{prop_operator-generator-preservation}) so we also have $g^{k+1}(\gamma_{k+1}) = \g_{k+1}$.
In addition we also have $g^{k+1}(\gamma_{k+1}) = \g_{k+1}$.
Indeed, as $\tau$ and $\gamma_{k+1}$ are both cells of $\g_{k+1} = g(\tau)\in \tau + \Span S$, there exists a $s\in\Span S$ such that $\g_{k+1} = \gamma_{k+1} + \tau + s$.
We have $S^{k+1} = S \sqcup \{\tau\}\bs\{\gamma_{k+1}\}$, therefore $\g_{k+1} \in \gamma_{k+1} + \Span S^{k+1}$.
By~\Cref{lem_canonical-cycle} we get $g^{k+1}(\gamma_{k+1}) = \g_{k+1}$.
\end{delayedproof}

\begin{delayedproof}{thm_explicit-generators-hdvf}
    % !TEX root = ../../main.tex
% Idea:
% \begin{itemize}
%     \item build the perfect \HC\ $X^\beta=(P^\beta,S^\beta,C^\beta)$ for $K^{\beta} := K\left(\bigcup \ols{\g_i}\right)$ ;
%     \item prove that $\iota^\beta_q \circ \left[g^\beta\right]_q^{K^{\beta}} : C_q^\beta \to H_q(K)$ is injective because of the \explicit\ property ;
%     \item use~\Cref{lem_injective-hdvf-completion} to build a perfect \HC\ $X=(P,S,C)$ such that $P^\beta \subseteq P$, $S^\beta \subseteq S$ and $C_q^\beta = C_q$ ;
%     \item use~\Cref{lem_generator-preservation} with $X^\beta$ and $X$ to prove that $(g(\gamma))_{\gamma\in C_q} = (g^\beta(\gamma))_{\gamma\in C^\beta_q} = (\g_i)_{i\leq \beta}$.
% \end{itemize}
Using~\Cref{lem_explicit-generator-complex}, there exists $X^\beta=(P^\beta,S^\beta,C^\beta)$ a perfect \HC\ for $(K^{\beta}, \dr)$ (with reduction $(f^\beta,g^\beta,h^\beta)$) such that its associated $q$-homology basis is $(\g_i)_{i\leq \beta}$.

As $(\g_i)_{i\leq \beta}$ is \explicit, we have that $\iota_q^\beta : H_q(K^\beta)\to H_q(K)$ is injective (see~\Cref{prop_explicit-bases-equivalence}(1.)).
$\big([g^\beta(\gamma)]^\beta\big)_{\gamma\in C_q^\beta}$ is linearly independent (because $X^\beta$ is perfect), so we obtain that $\big(\big[g^\beta_q(\gamma)\big]^K\big)_{\gamma\in C_q^\beta} = \big(\iota_q^\beta\big[g^\beta_q(\gamma)\big]^\beta\big)_{\gamma\in C_q^\beta}$ is linearly independent. %that $\iota^\beta_q \circ \left[g^\beta\right]_q^{\beta} \circ \inc{C_q^\beta}{}$ is injective.

As a consequence we can use~\Cref{lem_injective-hdvf-completion} with $C^\ast := C_q^\beta$ to complete $X^\beta$ into a perfect \HC\ $X=(P,S,C)$ for $(K,\dr)$, with reduction $(f,g,h)$, such that $P^\beta \subseteq P$, $S^\beta \subseteq S$ and $C_q^\beta \subseteq C_q$.
As $\beta = dim(H_q(K)) = |C_q|$ and $\beta = dim(H_q(K^\beta)) = |C_q^\beta|$ we get $C_q^\beta = C_q$.
Using~\Cref{lem_generator-preservation} with $X^\beta$ and $X$ we obtain
$(g_q(\gamma))_{\gamma\in C_q} = (g^\beta_q(\gamma))_{\gamma\in C^\beta_q} = (\g_i)_{i\leq \beta}$.
\end{delayedproof}

\subsection{Proofs of~\Cref{sect:charac-consequences}}

\begin{delayedproof}{prop_hc-equiv-tri-partition}
    ~\\
    \textbf{A ``dimensional layer'' of a perfect \HC\ is a tri-partition:}\\
    By~\Cref{lem_canonical-cycle}, we have $\Span S \cap \ker\dr = \{0\}$, so $S_q$ is a $q$-tree.
Similarly, $P_q$ is a $q$-cotree, moreover $|C_q|=\beta_q$ because $(P,S,C)$ is perfect.
    
    It remains to prove the maximality of $S_q$ and $P_q$.
    Let us consider $S_q$ first: we prove that if we add a cell $\tau$ to $S_q$, it creates a cycle.
    Let $\tau$ be a $q$-cell that is not in $S_q$.
    As $(P_q,S_q,C_q)$ is a partition of the $q$-cells, $\tau \in P_q\sqcup C_q$.
    
    Consider $x := gf(\tau)+\dr h(\tau) = \tau - h\dr(\tau)$ (by~\Cref{def:reduction}).
    $(P,S,C)$ is perfect so $\dr g = gd = 0$.
    This implies that $gf(\tau)$ is a cycle.
    It follows that $x$ is also a cycle because $gf(\tau)$ and $\dr h(\tau)$ are cycles.

    In addition, $h\dr(\tau) \in \Span S_q$ (by~\Cref{prop_hvdf-reduction}) but $\tau \notin S_q$ so $x= \tau - h\dr(\tau)$ is not zero and is composed of cells in $S_q\sqcup\{\tau\}$.
    Thus, $x$ is a non zero cycle of $S_q\sqcup\{\tau\}$, so $S_q$ is maximal.
    \smallskip
    
    The proof of the maximality of $P_q$ is similar:
    if $\tau \notin P_q$, then $f^\ast g^\ast(\tau)+\dr^\ast h^\ast(\tau)$ is a non zero cocycle with cells in $P_q\sqcup\{\tau\}$.
    
    \bigskip
    \noindent\textbf{A ``stack'' of tri-partitions is a perfect \HC:}\\
    We first prove that $(P,S,C) = (\bigcup A^q, \bigcup A_q, \bigcup E_q)$ is a \HC.
    To do that we need to show that $\dr_{|A^q}^{|A_{q+1}}$ is a squared invertible matrix for every $q$.

    To begin with we prove the injectivity of $\dr_{|A^q}^{|A_{q+1}}$:
    let $x \in \ker \dr_{|A^q}^{|A_{q+1}}$.
    We have that $x \in \Span A_{q+1}$ and that $\dr(x) \in \Span A_q \oplus \Span E_q$.
    Hence there exist $t \in \Span A_q$ (a tree) and $e\in \Span E_q$ such that $\dr(x) = t+e$.

    By~\Cref{def:tri-partition-canonical-cycle}, we have that $z_q(e)-e \in \Span A_q$ and $z_q(e)\in\ker\dr$.
    This implies:
    \begin{align*}
        %\dr(x) &= t+e\\
        \dr(x)-e &= t\\
        \dr(x)-e -(z_q(e)-e) &= t - (z_q(e)-e)\\
        \dr(x)-z_q(e) &= t - (z_q(e)-e)
    \end{align*}
    However, as $\dr(x)$ and $z_q(e)$ are cycles, $\dr(x)-z_q(e)\in \ker\dr$.
    In addition, $t$ and $z_q(e)-e$ are in $\Span A_q$ so $t - (z_q(e)-e)\in\Span A_q$.
    Since $\Span A_q \cap \ker\dr = \{0\}$ we obtain that $\dr(x)-z_q(e) = t - (z_q(e)-e) = 0$ so $\dr(x) = z_q(e)$.

    This directly implies $[z_q(e)] = [\dr(x)] = 0$.
    By~Theorem 4.3 in~\cite{edelsbrunner-tri_partition}, $(z_q(\epsilon))_{\epsilon\in E_q}$ is a homology basis, so $[z_q(e)]$ is not the 0 homology class unless $e = 0$.
    Thus, we get $\dr(x) = t+e = t$ so $\dr(x) \in \Span A_q \cap \ker\dr = \{0\}$.
    Thus, $\dr(x) = 0$ and $x\in\ker\dr$.
    This implies that $x\in \Span A_{q+1} \cap \ker\dr = \{0\}$ so $x=0$ and $\ker\dr_{|A^q}^{|A_{q+1}} = \{0\}$.
    
    This directly implies the injectivity of $\dr_{|A^q}^{|A_{q+1}}$ and the fact that $|A_{q+1}| \leq |A^{q}|$.

    \smallskip

    By duality (doing the proof for $x \in \ker {\dr^*}^{|A^q}_{|A_{q+1}}$), we obtain the injectivity of ${\dr^*}^{|A^q}_{|A_{q+1}}$ and the fact that $|A_{q+1}| \geq |A^{q}|$.
    This implies that $\dr_{|A^q}^{|A_{q+1}}$ is squared and injective so it is invertible.

    Hence, $(P,S,C)$ is a \HC.
    Moreover, it is perfect because we have $|C_q| = |E_q| = \beta_q$.
\end{delayedproof}

\subsection{Duality and Explicit Cohomology Bases}\label{sect:explicit-cohomology-bases}
    \begin{definition}[dual complex]\label{def:dual-complex}
        Given a complex $(K,\dr)$, its dual complex is the chain complex $(K^\ast, \dr^\ast)$ induced by the cells of $K$ but in reverse order, and equipped with the coboundary operator: $K^\ast_q := K_{n-q}$ and $\dr^\ast$ is the transpose of $\dr$.
        The $q$-homology of $(K,\dr)$ corresponds to the $(n-q)$-cohomology of $(K^\ast, \dr^\ast)$ and vice versa.
    \end{definition}

    \begin{proposition}\label{prop_hvdf-dual-reduction}
        If $(P,S,C)$ is a \HC\ for $(K,\dr)$ with reduction $(f,g,h)$, then $(S,P,C)$ is a \HC\ for the dual complex $(K^\ast, \dr^\ast)$ with reduction $(g^\ast,f^\ast,h^\ast)$.
        %More precisely, $K^\ast_q$ corresponds to the $(n-q)$-chains and $\dr^\ast$, $g^\ast$, $f^\ast$ and $h^\ast$ are the transpose of $\dr$, $g$, $f$ and $h$.
        %In particular, $(P,S,C)$ is perfect for $(K,\dr)$ if and only if $(S,P,C)$ is perfect for $(K^\ast, \dr^\ast)$.
    \end{proposition}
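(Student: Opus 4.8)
The plan is to reduce everything to one transposition identity. Since $\partial^\ast$ is the transpose of $\partial$ and, with the paper's conventions, $(\inc{A}{})^\ast = \proj{}{A}$ and $(\proj{}{A})^\ast = \inc{A}{}$, for any two sets of cells $A,B$ we have
\[
(\partial^\ast)_{AB} \;=\; \proj{}{B}\circ\partial^\ast\circ\inc{A}{} \;=\; \bigl(\proj{}{A}\circ\partial\circ\inc{B}{}\bigr)^\ast \;=\; (\partial_{BA})^\ast .
\]
First I would apply this with $A=P$, $B=S$: it gives $(\partial^\ast)_{PS} = (\partial_{SP})^\ast$, which is invertible because $\partial_{SP}$ is. In the triple $(S,P,C)$ the primary set is $S$ and the secondary set is $P$, so this is exactly the validity condition of \Cref{def:hdvf} for the dual complex; hence $(S,P,C)$ is a \HC\ for $(K^\ast,\partial^\ast)$.

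Next I would identify its associated reduction. Substituting ``primary $\mapsto S$, secondary $\mapsto P$, critical $\mapsto C$, $\partial\mapsto\partial^\ast$'' into the four defining matrices of \Cref{def:hdvf}, and using the identity above together with $((\partial_{SP})^\ast)^{-1} = ((\partial_{SP})^{-1})^\ast = H^\ast$, the blocks of the dual reduction become
\begin{gather*}
\widetilde H \;=\; \bigl((\partial^\ast)_{PS}\bigr)^{-1} \;=\; H^\ast,
\qquad
\widetilde F \;=\; -(\partial^\ast)_{PC}\,\widetilde H \;=\; -(\partial_{CP})^\ast H^\ast \;=\; G^\ast,\\
\widetilde G \;=\; -\widetilde H\,(\partial^\ast)_{CS} \;=\; -H^\ast(\partial_{SC})^\ast \;=\; F^\ast,
\qquad
\widetilde D \;=\; (\partial^\ast)_{CC} - (\partial^\ast)_{PC}\,\widetilde H\,(\partial^\ast)_{CS} \;=\; D^\ast .
\end{gather*}
Reading off the block presentations of $f$, $g$, $h$ in \Cref{def:hdvf} with these substitutions, the dual $f$-row $(\,\widetilde F\ \ 0\ \ \id\,)$ over $(S,P,C)$ is exactly $g^\ast$, the dual $g$-column $(0;\ \widetilde G;\ \id)$ is exactly $f^\ast$, and the dual $h$ -- whose only nonzero block carries the new primary component ($S$) into the new secondary component ($P$) via $\widetilde H = H^\ast$ -- is exactly $h^\ast$; similarly the reduced boundary $\widetilde D = D^\ast$ equals $d^\ast$. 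This yields the claimed reduction $(g^\ast,f^\ast,h^\ast)$.

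As a sanity check (or as an alternative argument) one can instead verify directly that $(g^\ast,f^\ast,h^\ast)$ satisfies the five axioms of \Cref{def:reduction} for $(K^\ast,\partial^\ast)$ by transposing each axiom of $(f,g,h)$: e.g. $fg=\id$ transposes to $g^\ast f^\ast=\id$, $fh=0$ transposes to $h^\ast f^\ast=0$, and $gf=\id-\partial h-h\partial$ transposes to $f^\ast g^\ast=\id-h^\ast\partial^\ast-\partial^\ast h^\ast$; since the reduction attached to a \HC\ is uniquely pinned down by its block structure, it must coincide with the one just computed. The only real difficulty here is clerical: one must keep the role swap (new primary $=S$, new secondary $=P$, in that order) perfectly synchronized with the replacement of $\partial$ by $\partial^\ast$, so that every index transposition in the defining formulas is matched by a matrix transpose. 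Once the identity $(\partial^\ast)_{AB}=(\partial_{BA})^\ast$ is established there is no conceptual obstacle, and in particular no new algebraic fact about \HCs\ is needed.
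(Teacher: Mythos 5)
Your proof is correct. The paper actually states this proposition without giving a proof, and your argument — establishing $(\dr^\ast)_{AB}=(\dr_{BA})^\ast$, deducing invertibility of $(\dr^\ast)_{PS}=(\dr_{SP})^\ast$, and then transposing the block formulas $H,F,G,D$ of \Cref{def:hdvf} (or, equivalently, transposing the five reduction axioms) — is exactly the standard verification the authors evidently had in mind, with all role swaps and degree conventions handled correctly.
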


    The notation for the induced complex also transfers to $(K^\ast, \dr^\ast)$: given a set of cells $A$ in a complex $\cplx$, $(K^\ast(A),\dr^\ast)$ is the minimum sub-chain complex of $(K^\ast, \dr^\ast)$ containing $A$.
    Precisely, $K^\ast(A)_q$ is generated by the cofaces of dimension $q$ of cells in $A$.

    % Given a $q$-cohomology basis $(\f_i)_{i\leq\beta}$ for a complex $\cplx$ and $J \subseteq \{1, \dots, \beta\}$ a set of indices, we denote $K^{*J} := K^\ast\left(\bigcup_{i \in J} \ols{\f_i}\right)$.
    % We denote $\iota^J_q$ the $q$-homology map induced by the inclusion $(K^{*J},\dr) \subseteq (K^\ast,\dr)$.
    % With this notation, we also denote $(K^{*\beta},\dr) := (K^{*\{1, \dots, \beta\}},\dr)$ and $\iota_q^\beta$ its associated homology map.
    Given a $q$-cohomology basis $(\f_i)_{i\leq\beta}$ for a complex $\cplx$ (which is by duality a $(n-q)$-homology basis for $(K^\ast, \dr^\ast)$), we denote $(K^{*\beta},\dr^\ast) := \big(K^\ast\big(\bigcup\nolimits_{i \leq \beta} \ols{\f_i}\big),\dr^\ast\big)$ the sub-complex of $(K^\ast, \dr^\ast)$ induced by $(\f_i)_{i\leq\beta}$.

    \begin{definition}[\Explicit\ cohomology bases]\label{def:explicit-co-bases}
        A $q$-cohomology basis $(\f_i)_{i\leq\beta}$ for a complex $\cplx$ is said to be \emph{\explicit} if it satisfies the following property:
        $$
            \forall k,\ \ols{\f_k}\bs\bigcup\nolimits_{i \neq k} \ols{\f_i} \neq \emptyset \qquad \tx{and}\qquad
            dim \left( H_{n-q}\left(K^{*\beta}\right) \right) = \beta
        $$
        By definition, $(\f_i)_{i\leq\beta}$ is an \explicit\ $q$-cohomology basis for a complex $\cplx$ if and only if $(\f_i)_{i\leq\beta}$ is an \explicit\ $(n-q)$-homology basis for $(K^\ast, \dr^\ast)$.
    \end{definition}
    \Cref{prop_hvdf-dual-reduction,thm_explicit-generators-hdvf,thm_hdvf-generators-explicit} implies the following characterization of cohomology bases:
    \begin{theorem}
        Given a perfect \HC\ $(P,S,C)$ for $\cplx$ with reduction $(f,g,h)$, $(f_q^\ast(\gamma))_{\gamma\in C_q}$ is an \explicit\ cohomology basis.
        Conversely, given $(\f_i)_{i\leq\beta}$ an \explicit\ $q$-cohomology basis for a complex $\cplx$, there exists a perfect \HC\ $(P,S,C)$ for $\cplx$ with reduction $(f,g,h)$ such that $(f_q^\ast(\gamma))_{\gamma\in C_q} = (\f_i)_{i\leq\beta}$.
    \end{theorem}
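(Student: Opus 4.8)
The plan is to deduce both statements from the homology case (\Cref{thm_hdvf-generators-explicit,thm_explicit-generators-hdvf}) by passing to the dual complex, exploiting that dualization is an involution. First I would record the key observation: by~\Cref{prop_hvdf-dual-reduction}, if $(P,S,C)$ is a \HC\ for $(K,\dr)$ with reduction $(f,g,h)$, then $(S,P,C)$ is a \HC\ for $(K^\ast,\dr^\ast)$ with reduction $(g^\ast,f^\ast,h^\ast)$; applying this a second time to $(S,P,C)$ and using $(K^\ast,\dr^\ast)^\ast = (K,\dr)$ together with the fact that transposition is an involution on the (finite-dimensional) chain spaces recovers $(P,S,C)$ with reduction $(f,g,h)$. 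Moreover perfectness transfers in both directions, since the reduced boundary of the dual \HC\ is $d^\ast$, so $d=0 \iff d^\ast = 0$.

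For the forward direction, let $(P,S,C)$ be a perfect \HC\ for $(K,\dr)$ with reduction $(f,g,h)$. By the observation above, $(S,P,C)$ is a perfect \HC\ for $(K^\ast,\dr^\ast)$ whose associated reduction has $f^\ast$ in the role played by $g$ in~\Cref{def:perfect-hc}. Since $K^\ast_q = K_{n-q}$, the family $(f^\ast_q(\gamma))_{\gamma\in C_q}$ is exactly the $(n-q)$-homology basis of $(K^\ast,\dr^\ast)$ associated to this perfect \HC. By~\Cref{thm_hdvf-generators-explicit} it is an \explicit\ $(n-q)$-homology basis of $(K^\ast,\dr^\ast)$, and by~\Cref{def:explicit-co-bases} this is precisely the assertion that $(f^\ast_q(\gamma))_{\gamma\in C_q}$ is an \explicit\ $q$-cohomology basis of $(K,\dr)$.

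For the converse, let $(\f_i)_{i\leq\beta}$ be an \explicit\ $q$-cohomology basis of $(K,\dr)$; by~\Cref{def:explicit-co-bases} it is an \explicit\ $(n-q)$-homology basis of $(K^\ast,\dr^\ast)$. Applying~\Cref{thm_explicit-generators-hdvf} to $(K^\ast,\dr^\ast)$ yields a perfect \HC\ $(P',S',C')$ for $(K^\ast,\dr^\ast)$ with reduction $(f',g',h')$ such that $(g'_{n-q}(\gamma))_{\gamma\in C'_{n-q}} = (\f_i)_{i\leq\beta}$. Dualizing once more, $(S',P',C')$ is a perfect \HC\ for $(K^\ast,\dr^\ast)^\ast = (K,\dr)$ with reduction $((g')^\ast,(f')^\ast,(h')^\ast)$; setting $(P,S,C) := (S',P',C')$, $f := (g')^\ast$, $g := (f')^\ast$, $h := (h')^\ast$, we obtain $f^\ast = g'$ and, matching the grading $q\leftrightarrow n-q$, $C_q = C'_{n-q}$, whence $(f^\ast_q(\gamma))_{\gamma\in C_q} = (g'_{n-q}(\gamma))_{\gamma\in C'_{n-q}} = (\f_i)_{i\leq\beta}$, as required.

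The argument is essentially bookkeeping and I expect no serious obstacle; the one place demanding care is keeping the dimension shift $q\leftrightarrow n-q$ coherent across~\Cref{def:dual-complex}, \Cref{prop_hvdf-dual-reduction} and~\Cref{def:explicit-co-bases}, and checking that transposition and dualization genuinely close up into an involution in this finite-dimensional cellular setting, so that ``dual of the dual'' returns the original \HC\ and reduction.
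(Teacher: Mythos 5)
Your proof is correct and follows exactly the route the paper intends: the paper gives no written proof, merely asserting that the theorem follows from \Cref{prop_hvdf-dual-reduction}, \Cref{thm_hdvf-generators-explicit} and \Cref{thm_explicit-generators-hdvf}, and your argument is precisely the careful unpacking of that dualization, including the involution check and the $q\leftrightarrow n-q$ bookkeeping that the paper leaves implicit.
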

    
% \begin{delayedproof}{prop_hc-induce-tri-partition}
%     \input{resources/proofs/prop_hc-induce-tri-partition}
% \end{delayedproof}

% \begin{delayedproof}{prop_tri-partition-induce-hc}
%     \input{resources/proofs/prop_tri-partition-induce-hc}
% \end{delayedproof}

% \subsection{Proofs of Persistent Homology and \HCs}\label{sect:appendix-hdvf-persistence}
% \input{resources/hdvf-persistence}

\end{document}